\newtheorem{thm}{Theorem}[section]
\newtheorem{defn}[thm]{Definition}
\newtheorem{cor}[thm]{Corollary}
\newtheorem{lem}[thm]{Lemma}
\newtheorem{propn}[thm]{Proposition}
\newtheorem{remark}[thm]{Remark}
\newtheorem{example}[thm]{Example}
\newcommand{\pf}{\noindent{\bf Proof.} }
\def\qed{{\hfill $\Box$ \bigskip}}
\def\sD {{\cal D}} \def\sE {{\cal E}} \def\sF {{\cal F}}
 \def\sH {{\cal H}} 
  \def\sL {{\cal L}}
 \def\sN {{\cal N}}
\def\P{\mathbb{P}}
\def\E{\mathbb{E}}
\def\EE{\mathcal{E}}
\def\FF{\mathcal{F}}
\def\NN{\mathcal{N}}
\def\R{{\mathbb R}}
\def\bar{\overline}
\def\wh{\widehat}
\def\wt{\widetilde}
\def\eps{\varepsilon}
\def\bR {{\mathbb R}}
\def\bE {{\mathbb E}}
\def\bea{\begin{align*}}
\def\eea{\end{align*}}
\def\bee{\begin{equation}}
\def\eee{\end{equation}}
\def\bP {{\mathbb P}}
\def\nn{{\nonumber}}
\def\1{{\mathbbm 1}}
\def\<{\langle}
\def\>{\rangle}
\def\wt{\widetilde}
\begin{document}
\bibliographystyle{plain}

\title{\Large \bf
 Heat kernel upper bounds for symmetric Markov semigroups}
\author{Zhen-Qing Chen \quad Panki Kim \quad Takashi Kumagai
\quad Jian Wang}

\maketitle

\begin{abstract}
 It is well known that Nash-type inequalities for symmetric Dirichlet forms are equivalent
 to on-diagonal heat kernel upper bounds
 for the
 associated symmetric Markov semigroups. In this paper, we
 show that
 both imply (and hence are equivalent to)
 off-diagonal heat kernel upper bounds under some mild assumptions.
 Our approach is based on a new generalized
 Davies' method.
 Our  results
 extend  that
 of
  \cite{CKS} for Nash-type inequalities with power order considerably and also extend  that
  of
  \cite{Gri}
 for second order differential operators on a complete non-compact manifold.

\medskip
\noindent\textbf{Keywords:} Heat kernel; Nash-type inequality; Dirichlet form; Markov process; Davies' method

\bigskip
\noindent {\bf AMS 2020 Mathematics Subject Classification}:
Primary  60J35,   35K08;
Secondary 31C25,  39B62, 47D07, 60J25.

\end{abstract}

\section{Introduction}
Analysis on heat
kernel has been an active research area for a long time
in analysis, geometry and probability.
A  central topic of the area is
to obtain
global quantitative  bounds on the heat kernel.
 Nash-type inequalities can
 effectively
 characterize
 upper bounds of the operator norms
 of semigroups from $L^1$-space to $L^\infty$-space (i.e., on-diagonal heat kernel upper bounds).
For example, let $(P_t)_{ t\geq 0}$ be a symmetric Markov semigroup on $L^2(E;m)$ associated with a symmetric regular Dirichlet form $(\sE, \sF)$ on
$L^2(E; m)$, where $E$ is a locally compact separable metric space and $m$ is a $\sigma$-finite Radon measure on $E$ with full support.
It is well known that (see \cite[Theorem 2.1]{CKS})
\begin{equation}\label{e:1-01} \|P_t\|_{1\to\infty}\le \varphi(t)e^{\delta t},\quad t>0
\end{equation}
 with
 $\varphi(t)=c \, t^{-\nu/2}$ for some constants $c,\nu>0$ and $\delta\ge0$,  is  equivalent to the following Nash-type
inequality
\begin{equation}\label{e:1-022}
\|f\|_2^{2+4/\nu}\le C(\sE(f,f)+\delta\|f\|_2^2)\|f\|_1^{4/\nu}   \quad \hbox{for } f\in \sF.
\end{equation}
Here and in what follows, $\|\cdot\|_p$ denotes the $L^p$-norm of the space $L^p(E;m)$ for  $p\in [1,\infty]$, and
$\| T\|_{p\to q}$ denotes the operator norm of a bounded operator $T: L^p(E; m)\to L^q(E; m)$ for $p, q\in [1, \infty]$.
  For $\varphi(t)=c\, t^{-\nu/2}$, we refer
 the reader
  to
  \cite{N} for the original proof that \eqref{e:1-022} implies \eqref{e:1-01} when $\delta=0$,  to \cite{V}
  for the connection
    between the estimate \eqref{e:1-01} for $\nu >2$ and  Sobolev inequalities,
  and to \cite{BCLS} for the equivalence between
  Sobolev-type inequalities and Nash-type inequalities.
  When  $\delta=0$ and $\varphi(t)$ is a strictly  decreasing
  differential bijection on
  $\R_+:=(0,\infty)$
  so that the derivative of $-\log \varphi(t)$
  is of polynomial growth,
  Coulhon   \cite[Theorem II.5]{Cou}   showed that
  \eqref{e:1-01} is equivalent to
the following generalized Nash-type inequality
\begin{equation}\label{e:1-02}
\theta(\|f\|_2^2)\le  \sE(f,f)   \quad \hbox{for } f\in \sF  \hbox{ with } \|f\|_1\le 1,
\end{equation}
where $\theta(r)$ is a positive function on
 $\R_+$ determined by $\varphi(r)$; see  Theorem \ref{T:thm1} or Section \ref{Section3}
for details.

The main goal of this paper is to establish the equivalences among Nash-type inequalities \eqref{e:1-02}, on-diagonal heat kernel upper bounds \eqref{e:1-01} and off-diagonal heat kernel upper bounds
for a large class of decreasing functions $\varphi (t)$ and for
general symmetric Markov processes which may have discontinuous sample paths.
 To state our main results, we need to introduce a class of regular functions which
  will serve
 as the rate function $\varphi(t)$ in \eqref{e:1-01}.

\begin{defn}\label{D1.1}\rm
A strictly deceasing $C^1$-function
$\varphi:
\R_+
\to \R_+$ is said to be \emph{regular}, if the following hold:
\begin{itemize}
\item[(i)] $\varphi(0):=\lim_{r\to0}\varphi(r)=\infty$ and $\varphi(\infty):=\lim_{r\to\infty}\varphi(r)=0$;
\item[(ii)]  there
exist a constant $c\ge1 $ and a decreasing function $N(t):\R_+\to\R_+$ such that,
with
 $M(t) :=-\log \varphi(t)$,
 $c^{-1} N(t)\le M'(t)\le cN(t)$
 on $\R_+$  and there is a constant $c_0>0$ so that $M'(t)\le c_0 M'(at)$ for all $t>0$ and $a\in [1,2]$;

\item[(iii)] for any $\lambda>2$ large enough, there exist positive constants $c(\lambda)$ and $C(\lambda)$ such that
$$\prod_{k=1}^\infty \left(\varphi \big((\lambda-1)\lambda^{-(k+1)}t\big)\right)^{2^{-k}}\le C(\lambda)\varphi\big(c(\lambda)t\big),\quad t>0.$$
\end{itemize}We denote this class of functions by $\mathcal{R}$. \end{defn}

Typical examples for regular functions  on $\R_+$ are $\varphi(r)=r^{-\nu/2}$ with $\nu>0$, or
  $\varphi(r)=r^{-\nu/2}$ with $\nu>0$ for small $r>0$, and $\varphi(r)=e^{-r^\alpha}$ with $\alpha\in (0,1]$ for large $r>0$.
  Indeed, for any decreasing function $\varphi:\R_+\to \R_+$ such that $\varphi(0)=\infty$, $\varphi(\infty)=0$ and that $1/\varphi$ has the doubling property (i.e., there is a constant $c_1\ge 1$ such that $1/ \varphi(2r)\le c_1/\varphi(r)$  for all $r>0$),
  we can find some $\bar\varphi\in \mathcal{R}$ and a constant $c_2\ge1$ so that
  $c_2^{-1}\bar\varphi(r)\le \varphi(r)\le c_2\bar\varphi(r)$ for all $r\in \R_+$; see Proposition \ref{P:dou}.

The following is a  special case of the main result,  Theorem \ref{T:3.2}, of this paper.

\begin{thm}\label{T:thm1} Suppose that $(\sE, \sF)$ is  a symmetric regular Dirichlet form on $L^2(E;m)$
having no killing
inside $E$
and $(P_t)_{ t\geq 0}$ is its associated semigroup.
Let $\varphi \in \mathcal{R}$
and $\delta\ge0$.
The following statements are equivalent.
\begin{itemize}
\item[{\rm(i)}] The following Nash-type inequality holds{\rm:}
\begin{equation}\label{E:Nash}
  c_1\theta(c_2 \|f\|_2^2)\le   \sE( f, f)+\delta\|f\|_2^2
 \qquad \hbox{for }f\in \sF \hbox{ with } \| f\|_1
\leq 1,
\end{equation}
 where $\theta(r)= -\varphi'(\varphi^{-1}(r))$ and $c_1, c_2$ are positive constants.

\item[{\rm(ii)}] The semigroup $(P_t)_{t\ge0}$ is ultracontractive in the sense that
 there are constants $c_3, c_4>0$ such that for all $t>0$,
\begin{equation}\label{e:ondup0}
\|P_t\|_{1\to\infty}\le c_3 \varphi(c_4 t)e^{\delta t}.
\end{equation}
\item[{\rm(iii)}] There are
a properly
exceptional set ${\cal N}\subset E$
and
 a  heat kernel
$p(t,x,y)$ associated with the semigroup $(P_t)_{t\ge0}$ and  defined on $(0,\infty)\times (E\setminus {\cal N})\times (E\setminus {\cal N})$ such that  for any $\varepsilon\in (0,1)$ there are
constants $C_\varepsilon, c_\epsilon>0$
 so that for all $t>0$
 and $x,y\in E\setminus {\cal N}$,
\begin{equation}\label{e:1.5}
p(t,x,y)\leq C_\varepsilon\, \varphi(
c_\varepsilon t) e^{
\delta t}  \;\exp\left( -|\psi(y)-\psi(x)|+
(1+\eps)\Lambda(\psi)^2     t \right),
\end{equation} where $$\Lambda (\psi)^2 :=
  \max\left\{\left\|\frac{d e^{-2\psi}\Gamma(e^{\psi}, e^\psi)}{d m}\right\|_\infty,
  \  \left\|\frac{d e^{2\psi}\Gamma(e^{-\psi}, e^{-\psi})}{d m}\right\|_\infty\right\} <\infty;
$$
equivalently, \begin{equation}\label{e:1.5--}
 p(t, x, y)\leq     C_\varepsilon
   \varphi(  c_\varepsilon t)  e^{ \delta t}
  \;  \exp \left( -\frac{d_{\EE} (x, y)^2}{
 4(1+\eps) t}\right),
\end{equation}
where
$$ d_\EE (x, y):=\sup\left\{\psi (x)-\psi(y): \ \psi\in
\sF\cap C_b(E)
 \hbox{ with } \Lambda (\psi) \leq 1\right\}.$$
\end{itemize}
\end{thm}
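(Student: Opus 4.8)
The plan is to establish the equivalences by proving (i) $\Leftrightarrow$ (ii), (i) $\Rightarrow$ (iii) and (iii) $\Rightarrow$ (ii); since (i) $\Leftrightarrow$ (ii) is a generalization of \cite{CKS} and \cite{Cou} it plays a supporting role, and the genuinely new part is (i) $\Rightarrow$ (iii). The implication (iii) $\Rightarrow$ (ii) is immediate: taking $\eps=1/2$ and the bound \eqref{e:1.5--} with the trivial choice $\psi\equiv0$ (so $\Lambda(\psi)=0$ and $d_\EE$ contributes nothing) gives $p(t,x,y)\le C_{1/2}\varphi(c_{1/2}t)e^{\delta t}$ for all $t>0$ and $x,y\in E\setminus\NN$, and integrating against $L^1$-data yields \eqref{e:ondup0}. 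For (i) $\Leftrightarrow$ (ii) I would run the classical Nash-type ODE argument: for $f\in L^1\cap L^2$ with $\|f\|_1\le1$ set $W(t):=\|P_tf\|_2^2$; the $L^1$-contractivity of $(P_t)$ keeps $\|P_tf\|_1\le1$, so \eqref{E:Nash} together with $W'(t)=-2\sE(P_tf,P_tf)$ gives $W'(t)\le -2c_1\theta(c_2W(t))+2\delta W(t)$. Since $\theta=-\varphi'\circ\varphi^{-1}$ means precisely that $\varphi$ solves $y'=-\theta(y)$, a comparison argument — in which Definition \ref{D1.1}(ii) controls the contribution of the $\delta W$-term and supplies the regularity of $\theta$ needed beyond the power case — yields $W(t)\le c\,\varphi(c't)e^{\delta t}$, hence \eqref{e:ondup0} via $\|P_t\|_{1\to\infty}\le\|P_{t/2}\|_{1\to2}^2$; conversely \eqref{e:ondup0} gives $\|P_tf\|_2^2\le c\,\varphi(c't)e^{\delta t}\|f\|_1^2$ and differentiating $W$ at a suitably chosen time recovers \eqref{E:Nash}.

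The core is (i) $\Rightarrow$ (iii), which I would prove by a generalized Davies' perturbation method. Fix $\psi\in\sF\cap C_b(E)$ with $\Lambda(\psi)<\infty$ and introduce the twisted semigroup $P_t^\psi:=e^{-\psi}P_te^\psi$; since $\psi$ is bounded this is a strongly continuous (non-symmetric) semigroup on each $L^p(E;m)$, with integral kernel $p^\psi(t,x,y)=e^{-\psi(x)}p(t,x,y)e^{\psi(y)}$ and $(P_t^\psi)^*=P_t^{-\psi}$. The algebraic heart is the two-sided estimate
\[
\sE(f,f)-\Lambda(\psi)^2\|f\|_2^2\ \le\ \sE\big(e^{-\psi}f,e^\psi f\big)\ \le\ \sE(f,f),\qquad f\in\sF\cap L^\infty(E;m),
\]
which follows for the strongly local part from the Leibniz rule for $\Gamma$ (here the hypothesis of no killing enters) and for the pure-jump part from the elementary inequality $(e^{a/2}-e^{-a/2})^2\le\tfrac12\big[(e^a-1)^2+(e^{-a}-1)^2\big]$ combined with $2|uv|\le u^2+v^2$; the two terms in the definition of $\Lambda(\psi)^2$ are exactly what the jump computation calls for. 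As $\tfrac12\big(\langle A^\psi g,g\rangle+\langle g,A^\psi g\rangle\big)=-\sE(e^{-\psi}g,e^\psi g)$ for $A^\psi$ the generator of $P_t^\psi$, the left-hand inequality gives $\tfrac{d}{dt}\|P_t^\psi f\|_2^2\le -2\sE(P_t^\psi f,P_t^\psi f)+2\Lambda(\psi)^2\|P_t^\psi f\|_2^2$. Feeding in the unconstrained form of \eqref{E:Nash} — after dividing through by $\|P_t^\psi f\|_1\le e^{2\|\psi\|_\infty}\|f\|_1$, which cancels between the two sides of the rescaled inequality — and repeating the comparison argument, I first obtain $\|P_t^\psi\|_{1\to2}^2\le c\,e^{4\|\psi\|_\infty}\varphi(c't)e^{(\delta+\Lambda(\psi)^2)t}$.

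I would then remove the $\|\psi\|_\infty$-dependent prefactor and sharpen the coefficient of $\Lambda(\psi)^2$ to $1+\eps$ by a multiscale iteration: writing $t=\sum_k t_k$ with $t_k$ geometrically distributed and factoring $P_t^\psi$ as a product of twisted operators $e^{-\psi_{k-1}}P_{t_k}e^{\psi_k}$ with the twists $\psi_k$ interpolating linearly down to $0$ and back up to $\psi$, one chains the corresponding $L^{2^k}\to L^{2^{k+1}}$ estimates so that the exponential losses telescope; this is exactly where Definition \ref{D1.1}(iii) is used, to collapse $\prod_k\big(\varphi((\lambda-1)\lambda^{-(k+1)}t)\big)^{2^{-k}}$ into $C(\lambda)\varphi(c(\lambda)t)$. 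The outcome is $\|P_t^\psi\|_{1\to\infty}\le C_\eps\,\varphi(c_\eps t)e^{\delta t}e^{(1+\eps)\Lambda(\psi)^2 t}$ with $C_\eps,c_\eps$ independent of $\psi$, that is $p(t,x,y)\le C_\eps\varphi(c_\eps t)e^{\delta t}\exp\big((1+\eps)\Lambda(\psi)^2 t+\psi(x)-\psi(y)\big)$; applying this to $\psi$ and to $-\psi$ (using $\Lambda(-\psi)=\Lambda(\psi)$) yields \eqref{e:1.5}. Finally \eqref{e:1.5} and \eqref{e:1.5--} are equivalent: for fixed $x,y$ one picks $\psi$ nearly realizing $d_\EE(x,y)$ with $\Lambda(\psi)\le1$, replaces $\psi$ by $\lambda\psi$, and optimizes $\lambda\mapsto-\lambda|\psi(x)-\psi(y)|+(1+\eps)\lambda^2\Lambda(\psi)^2 t$ over $\lambda>0$, the mild non-homogeneity of $\lambda\mapsto\Lambda(\lambda\psi)$ — controlled because the test functions are bounded — being absorbed into a further relaxation of $\eps$; the reverse implication is a similar scaling.

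The step I expect to be the main obstacle is precisely the passage from the $\psi$-dependent bound $\|P_t^\psi\|_{1\to2}^2\le c\,e^{4\|\psi\|_\infty}\varphi(c't)e^{(\delta+\Lambda(\psi)^2)t}$ to one with $\psi$-independent constants and the sharp coefficient $1+\eps$: because $P_t^\psi$ is not an $L^1$-contraction, the factor $e^{C\|\psi\|_\infty}$ cannot simply be discarded, and the multiscale iteration must be organized so that these prefactors accumulate to an absorbable quantity while the product of the on-diagonal factors at the various time scales still collapses to $\varphi(c_\eps t)$ — this is the role of the regularity conditions in Definition \ref{D1.1}, and carrying it out for a general, possibly non-local Dirichlet form (where only the elementary exponential inequality above, rather than an exact product rule, governs the perturbation of $\sE$) is the technical crux. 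Secondary but delicate points are the domain and approximation arguments needed to justify applying the Leibniz rule and the differential inequalities to $P_t^\psi f$, and the verification — standard but necessary — that \eqref{e:ondup0} already forces the existence of a genuine heat kernel $p(t,x,y)$ defined off a properly exceptional set.
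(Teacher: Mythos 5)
Your overall architecture matches the paper's: (i)$\Leftrightarrow$(ii) via Coulhon's Nash-type ODE argument, (iii)$\Rightarrow$(ii) by taking $\psi\equiv 0$, and the substance in (i)$\Rightarrow$(iii) via a Davies-type perturbation of the semigroup followed by an iteration adapted to general $\varphi\in\mathcal R$, with Definition \ref{D1.1}(iii) collapsing the infinite product of on-diagonal factors; the final optimization over $\lambda\psi$ to pass from \eqref{e:1.5} to \eqref{e:1.5--} is also as in the paper. However, your execution of the crucial iteration has a genuine gap, and it is exactly the one you flag as ``the main obstacle'' without resolving it.

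Two ingredients of the paper's proof are missing. First, your ``algebraic heart'' is only the $p=1$ perturbation inequality $\sE(e^{-\psi}f,e^\psi f)\ge\sE(f,f)-\Lambda(\psi)^2\|f\|_2^2$; this controls $\frac{d}{dt}\|P_t^\psi f\|_2^2$ and can only ever produce $1\to2$ (or $2\to2$) bounds. To run the iteration up to $L^\infty$ one needs the whole family, for $p\ge1$,
$\sE(e^\psi f^{2p-1},e^{-\psi}f)\ge \frac{C_0}{p}\sE(f^p,f^p)-\phi(C_0,\psi,p)\|f\|_{2p}^{2p}$ with $\phi(C_0,\psi,p)\le Cp^2\Lambda(\psi)^2$ (Example \ref{exm4.2}, taken from \cite{CKS}; for non-local forms the computation produces a cross term $-4\sE(f^p,f^p)^{1/2}\Lambda(\psi)\|f\|_{2p}^{p}$, whence the $p^2$ growth). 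The coefficient $1+\eps$ then comes from choosing $\lambda>2^\eta$ large so that $\sum_{k}\phi(\psi,2^k)\lambda^{-k}$ is within a factor $1+\eps$ of $\phi(\psi,1)=\Lambda(\psi)^2$ --- not from interpolating the twist. Your alternative ``multiscale iteration'' with twists $\psi_k$ interpolating between $0$ and $\psi$ telescopes trivially ($e^{-\psi}P_{t_1}e^{\psi_1}e^{-\psi_1}P_{t_2}\cdots=e^{-\psi}P_te^{\psi}$), but each factor $e^{-\psi_{k-1}}P_{t_k}e^{\psi_k}$ differs from a genuinely twisted semigroup by a multiplication operator of norm $e^{\|\psi_k-\psi_{k-1}\|_\infty}$, and over the whole chain these accumulate to $e^{C\|\psi\|_\infty}$ rather than cancelling; no mechanism for $\psi$-independent constants is supplied. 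Second, your treatment of the $L^1$ constraint in \eqref{E:Nash} --- dividing by $\|P_t^\psi f\|_1\le e^{2\|\psi\|_\infty}\|f\|_1$ --- is precisely what creates the irremovable $e^{4\|\psi\|_\infty}$ prefactor. The paper avoids it entirely by a normalization trick: apply the Nash inequality to $g:=f_{p,t}^p/\|f_{p,t}\|_p^{p}$, which satisfies $\|g\|_1=1$ identically, and use that $t\mapsto\|f_{p,t}\|_p$ is decreasing for $p\ge2$ (itself a consequence of the $p$-family of inequalities together with $\phi(\psi,2p)\ge\phi(\psi,p)$) to freeze the denominator at $t_0$ and integrate the resulting differential inequality in closed form through $\varphi$. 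Without these two devices your argument stalls at a $\psi$-dependent $1\to2$ bound, so the proof of (i)$\Rightarrow$(iii) is not complete as proposed.
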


\medskip

In the statement above,  $\Gamma(f,f)$  is
the energy measure of $f$ for $\EE$ when  $f\in \sF$.
Precise definitions of energy measure, properly
exceptional set,
Dirichlet form having no killings inside $E$,
 and heat kernel can be found in Section \ref{section2.1}.

As mentioned above, the equivalence of (i) and (ii) in Theorem \ref{T:thm1}
 is known; see \cite[Theorem II.5]{Cou} for the case that $\delta=0$.
Clearly (iii) implies (ii).
So the new contribution of Theorem \ref{T:thm1} is that (i) implies (iii).
 When $\varphi(t)=c \, t^{-\nu/2} $ for some constants $c,\nu>0$ and $\delta\ge0$,
  that (i)  gives (iii)  is
  \cite[Theorem 3.25]{CKS}, which is the main result  of  \cite{CKS}.
 (From it, one can easily extend  the result to the case when
 $\delta=0$ and
 $\varphi(t)=c \, \left(  t^{-\nu/2} \1_{\{t\leq 1\}} + t^{-\mu /2} \1_{\{ t>1\} } \right)$ for some constants $c>0$
 and $0<\mu \leq \nu<\infty$;
 see \cite[Corollary 3.28]{CKS}.)
The approach of \cite{CKS}
 makes  use  of the condition
 $\varphi(t)=c \, t^{-\nu/2} $ (or equivalently,   \eqref{e:1-022}) in an essential way  (see, e.g.,  \cite[Lemma (3.21)]{CKS}), which does not seem to work for general    $\varphi\in \mathcal{R}$.
  We note that there are quite many  cases
  that the correct  on-diagonal heat kernel upper bounds are not of the form
  $c \, t^{-\nu/2}e^{ \delta t}$.
  For example, on-diagonal estimates for the heat kernel on a non-compact manifold with bounded geometry can be of the form
  $c_1 t/(\log t)^{1/\alpha}$
  for large time, when the volume of the ball grows at least with the rate $c_2e^{c_3 r^\alpha}$ with $\alpha\in (0,1]$ for large
  radius $r$
  (see \cite{BCG} or Example \ref{Exm1.2} below for more details); on-diagonal heat kernel upper bounds for mixtures of symmetric stable-like processes on $\R^d$   are
   of the form $c_4(\Phi^{-1}(t))^d$ for some
  strictly increasing weighted function
    $\Phi$
   which satisfies doubling and reverse doubling properties (see \cite{CK03,CK08,CKW1} or Example \ref{Exm1.4} below).

\medskip

There have been quite many efforts
in obtaining on off-diagonal heat kernel estimates
under
more general settings with more
general on-diagonal bounds.
Several authors have studied  Gaussian off-diagonal bounds for symmetric diffusions by using functional inequalities.
 Theorem \ref{T:thm1}  is related in spirit   to a beautiful result  of  Grigor'yan \cite[Theorem 1.1]{Gri} that, under suitable conditions,
an
isoperimetric-type  inequality (that is,  a
Faber-Krahn
type inequality),  an on-diagonal upper bound,
and an off-diagonal Gaussian upper estimate are equivalent up to some  constant multiplies for Brownian motion
on a complete non-compact manifold.  (Note that
these isoperimetric
 inequalities imply Nash-type inequalities or Sobolev inequalities, but the converse is not true.)
The  approach of \cite{Gri} relies
crucially on some constructions that are specific to the setting of second order differential operators, and uses some technical tools like a mean-value type theorem, which do not seem to be easily available in  general setting as in our paper.

For  strongly local Dirichlet forms,
by the Leibniz rule, $\Lambda (\psi)^2=d\Gamma(\psi,\psi)/dm$, and so we have the following statement.

\begin{cor} \label{C:local} Let $(\sE, \sF)$ be a strongly local, symmetric regular Dirichlet form on $L^2(E;m)$
having no killings inside $E$,
and $(P_t)_{ t\geq 0}$ its associated semigroup.
 Let $\varphi \in \mathcal{R}$ and $\delta\ge0$. Then {\rm(i)} and {\rm(ii)} in Theorem $\ref{T:thm1}$ are equivalent
 to the following
\begin{itemize}
\item[{\rm(iii)'}] There are
a properly
exceptional set ${\cal N}\subset E$
and  a heat kernel
$p(t,x,y)$ associated with the semigroup $(P_t)_{t\ge0}$ and  defined on $(0,\infty)\times (E\setminus {\cal N})\times (E\setminus {\cal N})$ such that for any $\varepsilon\in (0,1)$, there are positive constants $c_\varepsilon$ and $ C_\varepsilon$
 so that for $t>0$ and $x,y\in E\backslash {\cal N}$,
\begin{equation}\label{e:1.6}
 p(t, x, y)\leq      C_\varepsilon \varphi( c_\varepsilon t)e^{ \delta t}
  \;  \exp \left( -\frac{d_{\EE}^* (x, y)^2}{4(1+\varepsilon) t}\right),
\end{equation}
 where $d^*_\EE (x, y)$
is
the intrinsic distance
induced by the Dirichlet form $(\EE, \FF)$, i.e.,
$$
d^*_\EE (x, y):=\sup\left\{\psi (x)-\psi(y): \ \psi\in
\sF\cap C_b(E)  \hbox{ with } d\Gamma (\psi, \psi)/dm \leq 1 \right\}.
$$
\end{itemize}

\end{cor}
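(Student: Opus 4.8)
The plan is to derive Corollary \ref{C:local} directly from Theorem \ref{T:thm1}. Since that theorem already gives the equivalence of (i), (ii) and (iii), all that remains is to check that, for a strongly local form, statements (iii) and (iii)$'$ say the same thing; and for this it suffices to show that the two intrinsic distances coincide, $d_\EE(x,y)=d^*_\EE(x,y)$ for all $x,y\in E$. I would obtain this by computing $\Lambda(\psi)$ explicitly for every $\psi\in\sF\cap C_b(E)$.

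First I would record the chain rule for energy measures of strongly local Dirichlet forms: for $\psi\in\sF\cap C_b(E)$ one has $\psi\in\sF_{\rm loc}\cap L^\infty(E;m)$, hence $e^{\pm\psi}\in\sF_{\rm loc}$, and applying the chain rule with the $C^1$ maps $r\mapsto e^{\pm r}$ gives $d\Gamma(e^{\psi},e^{\psi})=e^{2\psi}\,d\Gamma(\psi,\psi)$ and $d\Gamma(e^{-\psi},e^{-\psi})=e^{-2\psi}\,d\Gamma(\psi,\psi)$. (This is the Leibniz-rule computation referred to just before the statement.) From this, both quantities inside the maximum defining $\Lambda(\psi)^2$ equal $d\Gamma(\psi,\psi)/dm$ $m$-a.e., so $\Lambda(\psi)^2=\|d\Gamma(\psi,\psi)/dm\|_\infty$; in particular $\Lambda(\psi)<\infty$ automatically, and $\Lambda(\psi)\le1$ is literally equivalent to $d\Gamma(\psi,\psi)/dm\le1$ $m$-a.e. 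Therefore the two classes of test functions defining $d_\EE$ and $d^*_\EE$ are identical, and $d_\EE\equiv d^*_\EE$ on $E\times E$.

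With this in hand I would conclude as follows. Substituting $d_\EE=d^*_\EE$ into \eqref{e:1.5--} turns Theorem \ref{T:thm1}(iii) into the bound \eqref{e:1.6}, giving (iii)$\Rightarrow$(iii)$'$; conversely, (iii)$'$ reduces to (iii) under the same identification, and in any case setting $x=y$ in \eqref{e:1.6} (using $d^*_\EE(x,x)=0$) gives $p(t,x,x)\le C_\eps\varphi(c_\eps t)e^{\delta t}$, hence $\|P_t\|_{1\to\infty}\le C_\eps\varphi(c_\eps t)e^{\delta t}$, i.e.\ (ii). Combined with Theorem \ref{T:thm1}, this closes the chain of equivalences (i)$\Leftrightarrow$(ii)$\Leftrightarrow$(iii)$'$.

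The only point requiring care — and the main (mild) obstacle — is the chain rule for $e^{\pm\psi}$ when $\psi$ is only assumed bounded and in $\sF$, so that $e^{\pm\psi}$ need not lie in $\sF$ itself when $m(E)=\infty$. I would handle this by working throughout with the $\sF_{\rm loc}$-extension of the energy measure (the sense in which $\Gamma(e^{\pm\psi},e^{\pm\psi})$ is already taken in Theorem \ref{T:thm1}) together with the standard truncation/localization argument for strongly local forms; alternatively, one can note that $e^{\pm\psi}-1=\Phi(\psi)$ for a Lipschitz $\Phi$ with $\Phi(0)=0$, so that $e^{\pm\psi}-1\in\sF$ and the relevant energy measures are unchanged by the additive constant. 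Everything else is a direct substitution into Theorem \ref{T:thm1}.
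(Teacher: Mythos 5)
Your proposal is correct and follows essentially the same route as the paper: the paper also reduces (iii)$'$ to Theorem \ref{T:thm1}(iii) via the Leibniz/chain rule identity $\Lambda(\psi)^2=\|d\Gamma(\psi,\psi)/dm\|_\infty$ for strongly local forms (stated just before the corollary and in Example \ref{exm4.2}), which identifies the two test-function classes and hence $d_\EE$ with $d^*_\EE$, and it handles the issue of $e^{\pm\psi}\notin\sF$ exactly as you do, via $e^{\pm\psi}-1\in\sF_b$.
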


We note that in general the constant $\eps$
can not be taken to be zero in the exponent of
\eqref{e:1.6}.
See \cite[Example 3.1]{Mol}
 (especially the last asymptotic estimate there)
 for a counterexample, and
\cite{CouS}
and the references therein for sharp pointwise Gaussian estimates.

\medskip

Our approach adopts
Davies' idea of
 obtaining  off-diagonal heat kernel upper bounds
 through establishing $L^1\to L^\infty$-bounds for some perturbed semigroups.
 The main novel contributions of this paper   are

\begin{itemize}
 \item We directly apply
the generalized Nash-type inequalities to the perturbed semigroups
  by a localization trick.
We do not use the approach in  \cite[Lemma (3.21)]{CKS}  that is based on differential  inequalities,  which seems
to be specific to $\varphi (t)$  of being $c \, t^{-\nu/2}$ considered there.

 \item We introduce a new iteration argument to obtain the $L^1\to L^\infty$-bounds for perturbed semigroups,
 which works for general regular decay rate functions $\varphi\in \mathcal{R}$.
     \end{itemize}

In addition, our approach of establishing that
(i) implies (iii)  of
 Theorem \ref{T:thm1}
 is much simpler than that of \cite{CKS} even  in the  special case of
 $\varphi(t)=c \, t^{-\nu/2}$ for some $c,\nu>0$. As mentioned in \cite{Gri}, there
exist  many important classes of manifolds
on which the heat kernel of Brownian motion
decreases faster than polynomially as $t\to\infty$. Since Corollary \ref{C:local} holds for all regular functions, it can be used to deal with  the case that the heat kernel has an exponential/subexponential decay for strongly local Dirichlet forms, including a
 large variety of manifolds mentioned in \cite{Gri}.

 Davies'
method goes back to the paper \cite{Dav1} by E.B. Davies, where
 explicit constants in the exponential term for Gaussian upper bounds associated with various second
order elliptic or hypoelliptic operators on $\R^d$
were established. The method of \cite{Dav1}
uses logarithmic Sobolev inequality:
$$
\int f^2\log \frac{f^2}{\|f\|_2^2}\,dm\le r \sE(f,f)+(\log \beta(r)) \|f\|_2^2,
\quad f\in \sF, r>0.
$$
See \cite{Dav2} for more details on this topic.

Compared to the method using logarithmic Sobolev inequalities, ours
have some advantages. For example, our approach based on Nash-type inequalities
 \eqref{E:Nash}
can produce
 off-diagonal heat kernel upper bounds
with the optimal constant $4+\eps$
for the $d_\sE (x, y)^2$ term in  the exponent in \eqref{e:1.5--}
for general Dirichlet forms, which seems difficult to obtain through logarithmic Sobolev inequalities.
(Cf., \cite[Corollary 5.3]{Mi},
 where  $\delta $ should be zero  
 and
  $\varphi (t)$ is of the form  
  $ct^{-\nu/2} \ell_0 (t)$ for some positive
 function $\ell_0 $ on $(0, \infty)$ 
 that is slowly varying
 at both $0$ and $\infty$.)
On the other hand, as pointed out in \cite{Cou}, there are a number of works concerning the relations between Nash-type inequalities and other functional inequalities (for example, isoperimetric inequalities),
so it is possible to restate our main result in terms of other functional inequalities instead of
 Nash-type  inequalities.
 Here are two examples. First, we
give
the following corollary to
Theorem \ref{T:thm1}, which extends the main result of \cite{Gri} --- \cite[Theorem 1.1]{Gri} to general symmetric regular Dirichlet forms.
Recall that for an open set $D\subset E$, the part Dirichlet form $(\sE, \sF_D)$ in $D$ of $ (\sE, \sF)$ is given by
$$
\sF_D=\{u\in \sF: u = 0 \ \sE \hbox{-quasi-everywhere on } D\}.
$$
It is known (cf. \cite{CF, FOT}) that $(\sE, \sF_D)$ is a regular Dirichlet form on $L^2(D; m)$ and it corresponds
to the part process $X^D$ of $X$ associated with $(\sE, \sF)$ killed upon leaving $D$.

\begin{cor}\label{Cor1.4} Suppose that $(\sE, \sF)$ is  a symmetric regular Dirichlet form on $L^2(E;m)$
having no killing
inside $E$ and that  $(P_t)_{ t\geq 0}$ is its associated semigroup.
 Let $\varphi \in \mathcal{R}$ and
$\Theta(t) =N(\varphi^{-1}(1/t))$, where $N(t):\R_+\to \R_+$ is a decreasing function given in Definition $\ref{D1.1}${\rm(ii)}.
Then, {\rm(i)}, {\rm(ii)} and {\rm(iii)} in Theorem $\ref{T:thm1}$ with $\delta=0$ are equivalent
 to the following isoperimetric-type
 inequalities{\rm:}
 \begin{itemize}
\item[{\rm(iv)}] There exist constants $c_5,c_6>0$ such that for any pre-compact open set
$D\subset E$,
\begin{equation}\label{iso}
\lambda_1(D)\ge c_5 \Theta(c_6 m(D)),
\end{equation} where $\lambda_1(D)$ is the first eigenvalue of
the Dirichlet form $(\sE, \sF_D)$, i.e.,
$$
\lambda_1(D)=\inf\{\sE(f,f): f\in \sF\cap C_c(D) \hbox{ with } \|f\|_2=1\}.
$$

\item[{\rm(v)}] There exist constants $c_7,c_8>0$ such that for any pre-compact open set $D\subset E$ and $k\ge1$,
$$\lambda_k(D)\ge c_7 \Theta(c_8 m(D)/k),
$$
where $\lambda_k(D)$ is the $k^{\rm th}$
eigenvalue of
the Dirichlet form $(\sE,\sF_D)$.
\end{itemize}
 \end{cor}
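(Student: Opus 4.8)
\noindent The plan is to fold the isoperimetric statements (iv) and (v) into the chain (i)$\,\Leftrightarrow\,$(ii)$\,\Leftrightarrow\,$(iii) already supplied by Theorem~\ref{T:thm1} (applied with $\delta=0$). Concretely I would establish (i)$\,\Rightarrow\,$(iv), (iv)$\,\Rightarrow\,$(i) and (ii)$\,\Rightarrow\,$(v); since (v)$\,\Rightarrow\,$(iv) is immediate on taking $k=1$, this closes all the equivalences. The bookkeeping rests on two elementary identities: because $-\varphi'=M'\varphi$ one has $\theta(r)=r\,M'(\varphi^{-1}(r))$, and by Definition~\ref{D1.1}(ii), $\Theta(s)=N(\varphi^{-1}(1/s))\asymp M'(\varphi^{-1}(1/s))$; hence $r\,\Theta(c/r)\asymp\theta(r/c)$ for each fixed $c>0$. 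I will also use that $N$ and $\Theta$ are decreasing while $\theta$ is increasing, and, in the eigenvalue step, the at-most-polynomial decay of $M'$ coming from Definition~\ref{D1.1}(ii) (the bound $M'(t)\le c_0M'(at)$ for $a\in[1,2]$ and its consequence $sM'(s)\lesssim M(s)$ for large $s$).

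The implication (i)$\,\Rightarrow\,$(iv) is the easy direction: for a pre-compact open set $D$ (so $m(D)<\infty$) and $f\in\sF\cap C_c(D)$ with $\|f\|_2=1$, Cauchy--Schwarz gives $\|f\|_1\le m(D)^{1/2}$; applying \eqref{E:Nash} to $f/\|f\|_1$ yields $\sE(f,f)\ge c_1\|f\|_1^2\,\theta(c_2/\|f\|_1^2)\asymp M'(\varphi^{-1}(c_2/\|f\|_1^2))$, and since this is, up to fixed constants, a decreasing function of $\|f\|_1^2\le m(D)$, it is at least a constant multiple of $\Theta(m(D)/c_2)$; taking the infimum over $f$ gives \eqref{iso}. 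For (iv)$\,\Rightarrow\,$(i) I would use truncation: given $f\in\sF\cap C_c(E)$ with $f\ge0$ and $\|f\|_1\le1$, for each $\rho>0$ the super-level set $D_\rho:=\{f>\rho\}$ is pre-compact and open (because $f$ is continuous with compact support), $(f-\rho)_+\in\sF_{D_\rho}$ with $\sE((f-\rho)_+,(f-\rho)_+)\le\sE(f,f)$, and $m(D_\rho)\le\|f\|_1/\rho\le1/\rho$ by Chebyshev, so \eqref{iso} gives $\sE(f,f)\ge c_5\,\Theta(c_6/\rho)\,\|(f-\rho)_+\|_2^2$; choosing $\rho=\|f\|_2^2/4$, so that $\|(f-\rho)_+\|_2^2\ge\|f\|_2^2-2\rho\|f\|_1\ge\|f\|_2^2/2$, and invoking $r\,\Theta(c/r)\asymp\theta(r/c)$, one obtains $\sE(f,f)\gtrsim\theta(c'\|f\|_2^2)$, which, after a density argument extending it to all $f\in\sF$ with $\|f\|_1\le1$, is \eqref{E:Nash} with $\delta=0$, hence (i) by Theorem~\ref{T:thm1}. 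An alternative route, perhaps cleaner, is to prove (iv)$\,\Rightarrow\,$(ii) directly following Grigor'yan: apply \eqref{iso} to the quasi-open level sets of $\widetilde{P_tf}$ to get $-\tfrac{d}{dt}\|P_tf\|_2^2\gtrsim\|P_tf\|_2^2\,\Theta(c/\|P_tf\|_2^2)$ for $\|f\|_1\le1$, and integrate via the substitution $s=\varphi(w)$ (which sends $ds/\theta(s)$ to $-dw$) to recover $\|P_t\|_{1\to2}^2\lesssim\varphi(c''t)$, hence \eqref{e:ondup0}.

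For (ii)$\,\Rightarrow\,$(v), fix a pre-compact open $D$. The part semigroup $(P^D_t)$ is sub-Markovian and dominated by $(P_t)$, so $\|P^D_t\|_{1\to\infty}\le c_3\varphi(c_4t)$, and on the finite-measure space $(D,m)$ it is trace class with $\operatorname{Tr}(P^D_t)=\int_D p^D(t,x,x)\,m(dx)\le c_3\,m(D)\,\varphi(c_4t/2)$, the eigenvalues of $P^D_t$ being $\{e^{-\lambda_k(D)t}\}_{k\ge1}$ where $0<\lambda_1(D)\le\lambda_2(D)\le\cdots$ are the eigenvalues of $(\sE,\sF_D)$. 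Monotonicity gives $k\,e^{-\lambda_k(D)t}\le c_3m(D)\varphi(c_4t/2)$, whence $\lambda_k(D)\ge\sup_{t>0}\tfrac1t\big(\log\tfrac{k}{c_3m(D)}+M(c_4t/2)\big)$. Optimizing over $t$, the supremum equals $\tfrac{c_4}{2}M'(s_*)$ with $s_*$ determined by $s_*M'(s_*)-M(s_*)=\log\tfrac{k}{c_3m(D)}$; the regularity of $\varphi$ is then used to compare $M'(s_*)$ with $M'(\varphi^{-1}(k/(c_8m(D))))\asymp\Theta(c_8m(D)/k)$ --- handling the regime $m(D)\gtrsim k$ directly and reducing $m(D)\lesssim k$ to the bound already obtained for $\lambda_1$ --- which gives $\lambda_k(D)\ge c_7\Theta(c_8m(D)/k)$. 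Taking $k=1$ recovers (iv), so (v)$\,\Rightarrow\,$(iv) is trivial.

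I expect the genuine difficulties to be the following. In (iv)$\,\Rightarrow\,$(i), the density passage from $\sF\cap C_c(E)$ to all of $\sF$ is not automatic, since the $L^1$-normalization is not stable under $\sE_1$-convergence, so one must run it carefully through truncations and cutoffs; and in the Grigor'yan-type alternative one must instead develop the part Dirichlet form and its principal eigenvalue on the quasi-open sets $\{\widetilde{P_tf}>\rho\}$ and verify that \eqref{iso} persists there (first extending it to all open sets of finite measure by exhaustion, then to quasi-open sets, using that $m$ charges no polar set). The other delicate point, in (ii)$\,\Rightarrow\,$(v) --- and equivalently in integrating the differential inequality above --- is not the spectral or variational estimate but the calculus showing that the extremal value $\tfrac{c_4}{2}M'(s_*)$, respectively the solution of the ODE, is genuinely comparable to $\Theta(c\,m(D)/k)$, respectively to $\varphi(ct)$, uniformly in all the relevant ranges of $m(D)$, $k$ and $t$; this is exactly where the three conditions of Definition~\ref{D1.1}, in particular the decay/doubling condition (ii) and the product inequality (iii), are used in an essential way.
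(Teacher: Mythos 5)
Your route coincides with the paper's: (ii)$\Rightarrow$(v) is obtained from compactness of $P_t^D$ and the trace bound $k\,e^{-\lambda_k(D)t}\le \operatorname{Tr}(P_t^D)\le c\,m(D)\varphi(ct)$ followed by optimization in $t$ (the paper delegates exactly this to the proof of \cite[Theorem 2.2]{Gri}), (v)$\Rightarrow$(iv) is trivial, and (iv)$\Rightarrow$(i) is the level-set truncation argument that the paper outsources to \cite[Lemma 5.4]{GH} before converting $\|f\|_2^2\,\Theta(c/\|f\|_2^2)$ into $\theta(c'\|f\|_2^2)$ via $\theta(r)=rM'(\varphi^{-1}(r))$, i.e.\ \eqref{e:Nth}; your extra direct proof of (i)$\Rightarrow$(iv) is redundant but harmless. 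One correction to your sketch of (ii)$\Rightarrow$(v): since $\Theta$ is decreasing, for $k\gg m(D)$ the target $\Theta(c_8 m(D)/k)$ is \emph{larger} than the $\lambda_1$ bound $\Theta(c_6 m(D))$, so the regime $m(D)\lesssim k$ cannot be reduced to the first-eigenvalue estimate; instead the single choice $c_4 t=2\varphi^{-1}\big(k/(c_8 m(D))\big)$ together with Definition \ref{D1.1}(ii) (which gives $M(2s)-M(s)\gtrsim sM'(s)$ and $M'(s)\le c_0M'(2s)$) yields $\lambda_k(D)\gtrsim M'\big(\varphi^{-1}(k/(c_8 m(D)))\big)\asymp\Theta(c_8 m(D)/k)$ uniformly in all regimes, with no case analysis needed.
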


 Second example is that,
 though the proof of Theorem \ref{T:thm1} uses Nash-type inequalities,
 it can  easily be replaced by using the following super-Poincar\'e inequalities
$$\|f\|_2^2\le r \sE(f,f)+\beta(r)
\|f\|_1^2, \quad f\in \sF, r>0,
$$
which obviously are simpler than
logarithmic Sobolev inequalities mentioned above.
See the proof of off-diagonal Dirichlet heat kernel upper bounds in
Subsection \ref{section5.1}.
The reader  is  referred to \cite{W} for more details on super-Poincar\'e inequalities.

\medskip

We now comment on the assumption of $\Lambda (\psi)^2<\infty$ in Theorem \ref{T:thm1}.
In some  contexts (for example, anomalous Markov processes such as
diffusions or continuous time random walks on fractals with walk dimension larger than 2),
the energy measure $\Gamma(u,  u)$ is singular with respect to the measure $m$ unless $u$ is a constant,
and so in this case
the off-diagonal upper bounds \eqref{e:1.5} in Theorem \ref{T:thm1}  degenerate into the on-diagonal upper bounds \eqref{e:ondup0}.
 Indeed, (i) implies (iii) in Theorem \ref{T:thm1} is a special case of a
 more general result,  Theorem \ref{T:3.2},
 of this paper. We emphasize that Theorem \ref{T:3.2} does not have this
absolute continuity assumption and is applicable to the singular contexts mentioned above.

We like to mention
 another beautiful result due to Grigor'yan \cite{Gri2}
that gives a  two-point  off-diagonal Gaussian upper bound
  of $p(t,x,y)$ for Brownian motions on a smooth connected manifold
from the on-diagonal upper bounds
at two points $x$ and $y$ only rather than at all
points.
The approach of \cite{Gri2} is based on
an integral maximum principle,  which is quite
different from the functional-inequalities adopted in the present paper (because the functional-inequalities technique
yield a uniform upper bound for heat kernel).
The approach of \cite{Gri2} has been extended
to discrete time random walks on graphs in \cite{CGZ}  and   to  continuous time random walks in \cite{F, C}.

 \medskip

We note that an
integral maximum principle was also established in \cite[Lemma 4.3]{Gri} via the mean-value type theorem. The advantage of this approach based on the integral maximum principle is that it can
handle space-inhomogeneous
heat kernel estimates; for example, the heat kernel estimate on a manifold $(M,\rho)$ of a non-negative Ricci curvature has the following upper bound
$$
p(t,x,y)\le \frac{C_\varepsilon}
{\sqrt{v(x,\sqrt{t})\, v(y,\sqrt{t})}}
\exp\left(-\frac{\rho(x,y)^2}{(4+\eps)t}\right),\quad x,y\in M, \varepsilon>0,
$$
where
$v(x,r)$
is the Riemannian volume of
the geodesic ball with the centre $x\in M$ and of the radius $r$. See \cite[Section 5]{Gri} for more details.
By slightly modifying the proof of Theorem \ref{T:thm1}),  we establish in Theorem \ref{T:4.1}
a possibly space-inhomogeneous  off-diagonal upper bound estimate
for Dirichlet heat kernel from its   possibly  space-dependent on-diagonal upper bound.
This result will then be used  in \cite{CKKW} to  study
two-sided heat kernel estimates for reflected diffusions with jumps on metric measure spaces
that satisfy
general volume doubling condition.

\medskip

The rest of this paper is organized as follows. In Section \ref{section2},
 we briefly describe the setting of our paper and give some definitions for regular properties of functions. In particular, we prove that
for any decreasing function
 $\varphi:\R_+\to \R_+$ such that $1/\varphi$ has the doubling property, there is $\bar\varphi\in \mathcal{R}$ which is
 comparable to
 $\varphi$ by neglecting constants. In Section \ref{Section3}, we mainly recall the results from \cite{Cou} about the characterizations of Nash-type inequalities and on-diagonal heat kernel bounds. Section \ref{section4} is the main part of our paper, and is devoted to off-diagonal heat kernel bounds. We introduce a new generalized Davies' method
 and obtain
 off-diagonal heat kernel bounds from Nash-type inequalities in
a very general setting.
As an application of our approach,
we give in Theorem \ref{T:4.1}
an off-diagonal Dirichlet heat kernel estimates from
its possibly space-dependent on-diagonal upper bound.
Several examples are given in Subsection \ref{S:5.2}
to illustrate the strength of our main results.

 \smallskip

\noindent
{\bf Notations:}
We use ``$:=$" as a way of  definition.
 For $a, b\in \R$, $a\wedge b:=\min \{a, b\}$ and $a\vee b:=\max\{a, b\}$.
 We denote $\R_+:=(0,\infty)$. For any function $f:\R_+\to \R_+$, $f(0):=\lim_{\to 0}f(r)$ and $f(\infty):=\lim_{r\to\infty}f(r)$.
For any open set $B\subset E$, denote  the set of continuous functions on $B$ with compact support by  $C_c(B)$.
For a function space $\sH$
on $E$, we let $\sH_b:=\{f\in \sH: f \mbox{ is bounded}\}$.
The constants $c$, $C$ and $c_i$, whose exact
values are unimportant, are changed in each statement and proof. In particular, we write $c:=c(C_0, C,\eta,\cdots)$ which means that the constant $c$ depends on $C_0$, $C$, $\eta$ and $\cdots$ only.

\section{Preliminaries}\label{section2}
\subsection{Dirichlet form and heat kernel}\label{section2.1}
In this subsection, we first describe the setting of this paper.
Let $(E, \rho)$ be a locally compact separable metric space,
and $m$ a $\sigma$-finite Radon measure on $E$ with full support.
 Suppose that $(\sE,\sF)$ is a regular symmetric Dirichlet
form on $L^2(E; m)$. This is,

\begin{description}
\item{(i)}  $\sF$ is a linear subspace of $L^2(E;m)$, and $\sE$ is a symmetric bilinear form on $\sF\times\sF $
so that $\sE(u, u) \geq 0$ for every $u\in \sF$;

\item{(ii)}   $\sF$ is a Hilbert space with inner product $ {\sE_1} (u, v):=\sE(u, v)+\int_E u(x) v(x) \, m(dx)$;

\item{(iii)}  (Markovian property)   for any $u\in \sF$, $v:= (0\vee u) \wedge 1\in \sF$ and
$\sE(v, v) \leq \sE(u, u)$;

\item{(iv)} (regularity)
$\sF\cap C_c(E)$  is dense both in $(\sF,  \sqrt{\sE_1})$ and in $(C_c(E),\|\cdot\|_\infty)$.
\end{description}

 We say the Dirichlet form $(\sE,\sF)$ is strongly local, if for any $u,v\in \sF$ with compact support such that $u$ is constant on a neighbourhood of the support of $v$, $\sE(u,v)=0.$

By the Beurling-Deny formula,
any regular symmetric  Dirichlet form $(\sE,\sF)$ admits the following representation for any $u,v\in \sF$,
\begin{align*}\sE(u,v)=&\sE^{(c)}(u,v)+\frac{1}{2}\iint_{E\times E \backslash {\rm diag}}(u(x)-u(y))(v(x)-v(y))\,J(dx,dy)+\int_E u(x)v(x)\,\kappa(dx).\end{align*} Here $\sE^{(c)}$ is a symmetric form possessing the strongly local
property, $J(dx,dy)$ is a symmetric
Radon measure on $E\times E \backslash {\rm diag}$, where ${\rm diag}$ denotes the diagonal set, and $\kappa(dx)$ is a
Radon measure on $E$.
See \cite[Theorem 4.5.2]{FOT} or \cite[(4.3.1)]{CF}.
 We call $J(dx, dy)$ and $\kappa(dx)$
 the jumping measure  and the killing measure of $(\sE, \sF)$, respectively.
 {\it Throughout this paper,
we assume the Dirichlet form $(\sE,\sF)$ admits no killing inside $E$; that is,  $\kappa(dx)=0$.}

\medskip

Let $(\sL , {\sD}(\sL))$
be the $L^2$-infinitesimal generator of $(\sE, \sF)$;
that is, $u\in \sD (\sL)$,
if and only if $u\in \sF$ and there is some function $f\in L^2 (E; m)$ so that
 $$
  \sE(u,v)=-\int_E f(x) v(x)\, m(dx)
 \quad\hbox{for every }  v\in \sF.
 $$
 In this case, we write $f$ as $\sL u$.
   It is known that there
  exist a Borel exceptional subset $\NN$ of $E$ and
  an  $m$-symmetric Hunt process $X=\{X_t, t\geq 0; \bP_x, x\in E\setminus \NN\}$
  on $E$ associated with the  regular symmetric Dirichlet form $(\sE,\sF)$ on $L^2(E; m)$ in the sense that the
  transition semigroup $(P_t)_{t\geq 0}$ defined by
  $$
  P_t f(x) = \bE_x [ f(X_t) ], \quad x\in E\setminus \NN, \  t\geq 0
  $$
  is a strongly continuous symmetric  semigroup on $L^2(E; m)$ whose infinitesimal generator
  is the  $L^2$-infinitesimal generator
   $(\sL,  \sD (\sL))$ of $(\sE, \sF)$ on $L^2(E; m)$.
 A set $\sN$ is said to be properly exceptional
 for the Hunt process $X$, if $m(\mathcal{N})=0$ and $\bP_x(X_t\in \mathcal{N}$ for some $t>0)=0$ for all $x\in E\backslash \mathcal{N}$. It is known that a subset
 has zero $\sE_1$-capacity, if and only if it is contained in a Borel properly exceptional set.
The Hunt process $X$ is unique up to a set of zero $\sE_1$-capacity.
 The transition semigroup $(P_t)_{t\geq 0}$ can be extended
  to be  a strongly continuous contraction semigroup in $L^p(E; m)$
  for every $p\in [1, \infty]$.
 Every function in $\sF$ admits a quasi-continuous version,
 and we will always use its quasi-continuous representation.
We refer the detailed theory of symmetric Dirichlet forms to \cite{CF, FOT}.

\medskip

\begin{defn}\rm  A
measurable function
$p(t, x,y): (0,\infty)\times (E \backslash \NN) \times (E \backslash \NN) \to [0,\infty)$
is called the  \emph{heat kernel} associated with the
transition semigroup
$(P_t)_{t\ge0}$, if $\NN$ is of zero $\sE_1$-capacity and
the following hold:
\begin{align*}
P_tf(x) &=  \int p(t, x,y) f(y) \, m(dy)  \quad \hbox{for all } x \in E \backslash \NN,
f \in L^1(E;m), \\
p(t, x,y) &= p(t, y,x) \quad \hbox{for all } t>0,\, x ,y \in E \backslash \NN, \\
p(s+t , x,y) &= \int p(s, x,z) p(t, z,y) \,m(dz)
\quad \hbox{for all } s,t>0,
\,\, x,y \in E \backslash \NN.
\end{align*}\end{defn}

It was proved in \cite[Theorem 3.1]{BBCK} that, if there is a positive left continuous function $\varphi(t)$ on $\R_+$ so that
 $$\|P_tf\|_\infty\le \varphi(t)\|f\|_1
 \quad \hbox{for } f\in L^1(E;m) \hbox{ and } t>0,
 $$
  then there exists a properly exceptional set $\NN$ such that the semigroup $(P_t)_{t\ge0}$ has
 a heat kernel $p(t, x,y)$ defined on $(0,\infty)\times (E \backslash \NN) \times (E \backslash \NN)$.  Note that we can always extend (without further
 mentioning) $p(t,x,y)$ to all $x,y\in E$ by setting $p(t,x,y)=0$ if
 $x$ or $y$ is in $ \NN$.
 So one can also extend the transition function
  $$P(t,x,dy)=p(t,x,y)\,m(dy)=\P_x(X_t \in dy)$$ of $X$ into $(0,\infty)\times E\times \mathcal{B}(E)$.

\medskip

Recall that $\sF_b$ is the collection of bounded functions in $\sF$.
It is well known that for any
$u\in \sF_b$, there exists a unique positive
Radon measure
$\Gamma(u,u)$ on $E$
so that
\begin{equation}\label{e:1.1}
\int_E v \, d\Gamma(u,u)=\sE(u, uv)-\frac 12\sE(u^2,v)
\quad \hbox{for every } v\in \sF_b.
\end{equation}
 Furthermore,  for any $u,v\in \sF_b$,
$\Gamma(u,v)$ is defined by polarization.
The Radon measure $\Gamma(u,u)$ can be uniquely extended to any $u\in \sF$
as the increasing limit of $\Gamma (u_n, u_n)$, where
$u_n:=((-n)\vee u)\wedge n$.
The measure $\Gamma(u,u)$ is called the \emph{energy measure} of $u$
for $(\sE, \sF)$.  See  \cite[Chapter 3.2]{FOT} or \cite[Chapter 4.3]{CF} for more details.
When $\Gamma(u,u)$ is absolutely continuous with respect to the measure $m$, its Radon-Nikodym
is also called  the {\it square field operator} or the {\it carr\'e du champ} in some literature.
Indeed,  we can define $d\Gamma(u,u)$,
for any $u\in \sF_b$, by the weak limit of the measure
 $$
 d\Gamma_t(u,u)=\frac{1}{2t}\left(
\int_{E} (u(x)-u(y))^2 P(t,x,dy)
 \right)\,m(dx).
 $$
 Then by the fact that $$(u(x)-u(y))(u(x)v(x)-u(y)v(y))-u(x)(u(x)-u(y))(v(x)-v(y))=(u(x)-u(y))^2v(y)$$ and the symmetry of $P(t,x,dy)\,m(dx)$, we can easily verify that \eqref{e:1.1} holds.
 Note that, since the Dirichlet form $(\sE,\sF)$ admits no killings inside $E$, for any $u\in \sF_b$,
 $$
 \lim_{t\to 0} \frac{1}{t}\int_E u(x)^2(1-P_t1(x))\,m(dx)=0.
 $$
 We take this opportunity to point out  that no killings inside $E$ condition needs to be imposed
 in \cite{CKS}; otherwise, \cite[(1.4) and (3.4)]{CKS} do not hold.
 Similar to the remark before \cite[Theorem (3.9)]{CKS}, we can extend the definition of $\Gamma(u,u)$ from $\sF_b$ into $\hat \sF_b=\{u+c: u\in \sF_b, \, c\in \R \}$
  by setting   $\Gamma(u+c, u+c)= \Gamma (u, u)$ for $u\in \sF_b$ and $c\in \R$.
 In particular, $\Gamma (e^\psi, e^\psi)$ is well defined for every $\psi \in \sF_b$
as $e^\psi -1 \in \sF_b$. The reader is  referred to \cite[Chapter 4.3]{CF} for the construction of the energy measure $\Gamma(u,u)$   by using  a probabilistic approach, and to \cite[Chapter 3.2]{FOT} using an analytic approach.

\begin{example}\rm Let $(M,\rho)$  be a manifold with Riemannian volume measure $\mu$,
and $\sE(u,u)=\frac{1}{2}\int_M |\nabla u|^2\sigma\,d\mu$, where $\sigma>0$ is a measurable function on $M$ with $\sigma,\sigma^{-1}\in L_{{\rm loc}}^\infty(M;\mu)$. Let $dm=\sigma\,d\mu$. Then, $d\Gamma(u,u)=\frac{1}{2}|\nabla u|^2\,dm.$
\qed\end{example}

\begin{example} \rm Let $(\sE,\sF)$ be a symmetric $\alpha$-stable-like Dirichlet form on $L^2(\R^d;dx)$ as follows
\begin{align*}
\sE(u,v)&=\frac{1}{2}\iint_{\bR^d\times \bR^d\setminus {\rm diag}}(u(x)-u(y))(v(x)-v(y))  \frac{c(x,y)}{|x-y|^{d+\alpha}} \,d x \,d y,\\
\sF &=\{u\in L^2(\bR^d;dx):\sE(u, u)<\infty\}, \end{align*} where $\alpha\in (0,2)$ and $c_0^{-1}\le c(x,y)=c(y,x)\le c_0$ for some $c_0\ge1$. Then,
$$
\Gamma(u,u) (dx) =\frac{1}{2} \left( \int_{\{y\in\R^d: y\neq x\}}(u(x)-u(y))^2  \frac{c(x,y)}{|x-y|^{d+\alpha}} \,d y \right) dx.
$$
\qed
\end{example}

\subsection{Doubling function, regular function and  condition (D)}\label{section2.2}

\begin{defn}\rm
An increasing function $f:\R_+\to\R_+$ is said to be a doubling function, if there is a constant $c>0$ such that
  $$
  f(2r)\le cf(r) \quad \hbox{for } r>0.
  $$
 The above  is equivalent to that
 there are constants $C\ge1$ and $\eta>0$ so that
$$\frac{f(R)}{f(r)}\le C\left(\frac{R}{r}\right)^\eta \quad \hbox{for any } 0<r\le R.
$$
In this case, we also say  that  $f$ has the doubling property.
 \end{defn}

\medskip

The following definition is taken from \cite[p.\ 515]{Cou}
(see also \cite[Definition 2.1]{Gri}).

\begin{defn} \rm  A differentiable function $f:
\R_+ \to \R_+$ is said to satisfy condition (D), if there exists some constant $\lambda>0$ such that, with $M(t):=-\log f(t)$,
\[
 M'(s)\ge \lambda M'(t) \quad \hbox{for any }  t>0 \hbox{ and }  s\in [t,2t].
\]
 \end{defn}

For instance, $C^1$-functions $f(t)$ that behave like $t^{-\delta_1}$ for small $t$ with $\delta_1>0$, and $e^{-c_0t^{\delta_2}}$ for large $t$ with $c_0,\delta_2>0$ satisfy condition (D).

\medskip

 We have
 the following property for  the doubling function.

\begin{lem}\label{L:3.11} Let $\varphi:
\R_+ \to \R_+$ be a decreasing function. Then $1/\varphi$ satisfies the  doubling property, if and only if there exist  a strictly decreasing
$C^1$ function $\bar \varphi:
\R_+ \to \R_+$,
and positive constants $c_1\le c_2$ and
$b_1\le b_2$, such that
the following hold for all $r>0${\rm:}
\begin{equation}\label{e:2.5-}
c_1\varphi(r)\le \bar \varphi(r)\le c_2 \varphi(r)\end{equation} and
\begin{equation}\label{e:2.5}
b_1\le
-\frac{\bar\varphi'(r)}{\bar\varphi(r)}r\le
b_2.
\end{equation}
In particular,
\begin{itemize}
\item [{\rm(i)}]
The function $\bar \varphi$ satisfies condition {\rm(D)}. In fact, with $M(r):=-\log \bar\varphi(r)$
$$
\frac{b_1}{r}\le M'(r)=-\frac{\bar
\varphi'(r)}{\bar \varphi(r)} \leq    \frac{b_2}{r},\quad r>0.
$$

\item[{\rm (ii)}] If $\varphi(0)=\infty$ and $\varphi(\infty)=0$, then there is a constant $c\ge1$ such that
\begin{equation}\label{e:lll}c^{-1}\frac{r}{\bar\varphi^{-1}(r)}\le
-\bar\varphi'(\bar\varphi^{-1}(r))\le
c \frac{r}{\bar\varphi^{-1}(r)},\quad r>0.\end{equation} \end{itemize} \end{lem}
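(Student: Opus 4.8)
The plan is to dispose of the two ``in particular'' items and the easy implication first, and then to build $\bar\varphi$ for the substantive direction by an averaging/smoothing construction.

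\medskip
\noindent\emph{Items {\rm(i)}, {\rm(ii)} and ``$\exists\,\bar\varphi\Rightarrow 1/\varphi$ doubling''.} Suppose $\bar\varphi$, $0<c_1\le c_2$ and $0<b_1\le b_2$ are as in the statement. Put $M(r):=-\log\bar\varphi(r)$, a $C^1$ function with $M'(r)=-\bar\varphi'(r)/\bar\varphi(r)$; then \eqref{e:2.5} is exactly $b_1/r\le M'(r)\le b_2/r$, which is the displayed inequality of (i), and condition {\rm(D)} follows immediately since for $t>0$ and $s\in[t,2t]$ one has $M'(s)\ge b_1/s\ge b_1/(2t)\ge \frac{b_1}{2b_2}M'(t)$. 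For (ii), if in addition $\varphi(0)=\infty$ and $\varphi(\infty)=0$, then by \eqref{e:2.5-} also $\bar\varphi(0)=\infty$ and $\bar\varphi(\infty)=0$, so $\bar\varphi$ is a strictly decreasing $C^1$ bijection of $\R_+$ onto itself; substituting $s=\bar\varphi^{-1}(r)$ into \eqref{e:2.5} (so $\bar\varphi(s)=r$) gives $b_1 r/s\le-\bar\varphi'(s)\le b_2 r/s$, which is \eqref{e:lll} with $c=\max\{b_2,1/b_1\}$. Finally, integrating $b_1/r\le M'(r)\le b_2/r$ over $[r,2r]$ gives $\bar\varphi(r)/\bar\varphi(2r)=e^{M(2r)-M(r)}\le 2^{b_2}$, and then \eqref{e:2.5-} yields $1/\varphi(2r)\le (c_2/c_1)2^{b_2}/\varphi(r)$, so $1/\varphi$ is doubling.

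\medskip
\noindent\emph{Construction of $\bar\varphi$ when $1/\varphi$ is doubling.} Fix $C\ge1$ and $\eta>0$ with $\varphi(r)\le C(R/r)^{\eta}\varphi(R)$ for $0<r\le R$, choose any $\beta>\eta$, and set
\[\varphi_1(r):=\beta\,r^{-\beta}\int_0^r s^{\beta-1}\varphi(s)\,ds,\]
the integral converging at $0$ because $s^{\beta-1}\varphi(s)\le C\varphi(1)\,s^{\beta-1-\eta}$ for $s\le1$. Since $\varphi$ is non-increasing, $\varphi(s)\ge\varphi(r)$ for $s\le r$ gives $\varphi_1(r)\ge\varphi(r)$, while $\varphi(s)\le C(r/s)^{\eta}\varphi(r)$ gives $\varphi_1(r)\le\frac{C\beta}{\beta-\eta}\varphi(r)$; thus $\varphi_1$ is comparable to $\varphi$. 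The map $r\mapsto\int_0^r s^{\beta-1}\varphi(s)\,ds$ is locally Lipschitz, so $\varphi_1$ is continuous, and differentiating $r^{\beta}\varphi_1(r)=\beta\int_0^r s^{\beta-1}\varphi(s)\,ds$ (at each continuity point of $\varphi$) gives $\varphi_1'(r)=\frac{\beta}{r}\bigl(\varphi(r)-\varphi_1(r)\bigr)\le0$, so $\varphi_1$ is non-increasing and $-r\varphi_1'(r)/\varphi_1(r)=\beta\bigl(1-\varphi(r)/\varphi_1(r)\bigr)$ is bounded above by $b_2:=\beta-\frac{\beta-\eta}{C}>0$. To get a genuine $C^1$ function I would apply the same construction once more, now to $\varphi_1$ (which is continuous, non-increasing, with $1/\varphi_1$ doubling): $\bar\varphi(r):=\beta r^{-\beta}\int_0^r s^{\beta-1}\varphi_1(s)\,ds$. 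Since $s\mapsto s^{\beta-1}\varphi_1(s)$ is continuous, $\bar\varphi\in C^1(\R_+)$; moreover $\bar\varphi$ is comparable to $\varphi_1$ (hence to $\varphi$), it is non-increasing, and $-r\bar\varphi'(r)/\bar\varphi(r)=\beta\bigl(1-\varphi_1(r)/\bar\varphi(r)\bigr)$ is bounded above by a constant, which I still call $b_2$.

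\medskip
\noindent\emph{The lower bound $b_1>0$ --- the crux.} It remains to show $-r\bar\varphi'(r)/\bar\varphi(r)$ is bounded below by a strictly positive constant, equivalently $\inf_{r>0}\varphi_1(r)/\varphi(r)>1$ (and likewise $\inf_{r>0}\bar\varphi(r)/\varphi_1(r)>1$). From
\[\frac{\varphi_1(r)}{\varphi(r)}=1+\beta\,r^{-\beta}\int_0^r s^{\beta-1}\Bigl(\tfrac{\varphi(s)}{\varphi(r)}-1\Bigr)\,ds,\]
the integrand is $\ge0$, but could degenerate if $\varphi$ is nearly constant on long intervals; this is where one invokes the doubling of $1/\varphi$ in the \emph{growth} direction, i.e.\ that there are $c\in(0,1]$ and $\eta_1>0$ with $\varphi(s)/\varphi(r)\ge c(r/s)^{\eta_1}$ for $0<s\le r$. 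Then, with $\theta:=c^{1/\eta_1}\in(0,1]$, one has $\varphi(s)/\varphi(r)\ge 2^{\eta_1}$ whenever $s\le\theta r/2$, so
\[\frac{\varphi_1(r)}{\varphi(r)}\ \ge\ 1+\beta\,r^{-\beta}\int_0^{\theta r/2}s^{\beta-1}(2^{\eta_1}-1)\,ds\ =\ 1+(2^{\eta_1}-1)(\theta/2)^{\beta}\ =:\ 1+\delta\ >\ 1,\]
uniformly in $r$, whence $-r\varphi_1'(r)/\varphi_1(r)=\beta\bigl(1-\varphi(r)/\varphi_1(r)\bigr)\ge\beta\delta/(1+\delta)=:b_1>0$; the same computation in the second iteration gives $-r\bar\varphi'(r)/\bar\varphi(r)\ge b_1'>0$, and in particular $\bar\varphi$ is strictly decreasing. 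Combined with the previous step, this produces $\bar\varphi\in C^1(\R_+)$ together with $0<c_1\le c_2$ and $0<b_1\le b_2$ satisfying \eqref{e:2.5-} and \eqref{e:2.5}, as required. I expect this last step to be the main obstacle: the comparison $\varphi_1\asymp\varphi$ and the upper bound $b_2$ follow from the ``shrinking'' side of the doubling hypothesis almost formally, whereas the strict lower bound $b_1>0$ forces $\varphi$ to decay at least polynomially and therefore genuinely uses the ``growth'' side of the doubling of $1/\varphi$; the care required is to extract the displayed uniform-in-$r$ estimate rather than relying on the (nonuniform) pointwise positivity of $\varphi(s)/\varphi(r)-1$.
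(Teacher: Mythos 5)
Your treatment of the easy implication and of items (i) and (ii) is correct and agrees with the paper, and your Hardy-type average $\varphi_1(r)=\beta r^{-\beta}\int_0^r s^{\beta-1}\varphi(s)\,ds$ is a legitimate alternative to the paper's forward average $\bar\varphi(r)=\frac1r\int_r^{2r}\varphi_0(u)\,du$ as far as the comparability \eqref{e:2.5-}, the $C^1$ smoothness, and the upper bound in \eqref{e:2.5} are concerned: as you note, these use only the ``shrinking'' side of the hypothesis.

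The gap is exactly at the step you flag as the crux. The inequality $\varphi(s)/\varphi(r)\ge c\,(r/s)^{\eta_1}$ for $0<s\le r$ is a \emph{reverse} doubling property, and it does not follow from the doubling of the increasing function $1/\varphi$: that hypothesis says $1/\varphi(2r)\le c/\varphi(r)$, i.e.\ it bounds $\varphi(s)/\varphi(r)$ from \emph{above} by $C(r/s)^{\eta}$ and gives no lower bound whatsoever. If $\varphi$ is nearly constant over long dyadic ranges --- e.g.\ $\varphi(r)=1/\log(e+r)$, for which $1/\varphi$ is doubling --- then $\varphi_1(r)/\varphi(r)\to1$ and your $\delta$, hence $b_1$, degenerates to $0$. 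In fact no comparable $\bar\varphi$ can satisfy the lower bound in \eqref{e:2.5} for such a $\varphi$, since integrating $-\bar\varphi'(u)/\bar\varphi(u)\ge b_1/u$ forces $\bar\varphi(R)\lesssim R^{-b_1}$, i.e.\ at least polynomial decay; so the step does not merely need a better argument, it cannot be repaired from the stated hypothesis. You are in good company here: the paper's own proof conceals the same issue in a calculus slip, differentiating $\int_r^{2r}\varphi_0(u)\,du$ as $\varphi_0(2r)-\varphi_0(r)$ instead of $2\varphi_0(2r)-\varphi_0(r)$; with the correct derivative, the claimed bound $\bar\varphi'(r)\le-\frac{1}{r^2}\int_r^{2r}\varphi_0(u)\,du$ requires precisely $2\varphi_0(2r)\le\varphi_0(r)$, i.e.\ reverse doubling again. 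Your diagnosis that the lower bound ``forces $\varphi$ to decay at least polynomially'' is exactly right; the error is in asserting that the doubling of $1/\varphi$ supplies that decay rather than recognizing it as an additional assumption that must be imposed.
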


\begin{proof}
Suppose that  \eqref{e:2.5} holds. Then,
 \[
\log (\bar\varphi(2r)/\bar\varphi(r))=\int_r^{2r}\frac{\bar\varphi'(u)}{\bar\varphi(u)}\,du\ge -b_2\int_r^{2r}\frac{1}{u}\,du
=-b_2\log 2.
\]
So $1/\bar\varphi(r)\leq 1/\bar\varphi(2r)\le 2^{b_2} /\bar\varphi(r)$. In particular, $1/\bar\varphi(r)$ is a doubling function. This yields that  $1/\varphi$ satisfies the  doubling property
 as well if \eqref{e:2.5-} holds.

Suppose that  $1/\varphi$ satisfies the  doubling property.  Let $\varphi_0(2^i)= \varphi(2^i)$
for all $i\in \mathbb Z$ and extend it linearly
on $\R_+$.
Then $\varphi_0$ is
continuous and
decreasing on $\R_+$.
For  any $r>0$, choose $i\in \mathbb Z$ such that  $2^i\le r < 2^{i+1}$.
Then,   by the doubling property of the function $1/\varphi$, we have
\begin{equation}\label{e:compar}
\frac 1{C_1} \varphi(r)\le
 \varphi(2^{i+1}) \le \varphi_0 (r)\le \varphi(2^{i})\le C_1 \varphi(r)
\end{equation}for some constant $C_1>1$ independent of $i$ and $r$.
In particular, $1/\varphi_0$ is a doubling function on $\R_+$. Now, we define
$$\bar\varphi(r):=\frac{1}{r}\int_r^{2r}\varphi_0(u)\,du,\quad r>0.$$ It is clear that $\varphi_0(2r)\le\bar\varphi(r)\le \varphi_0(r)$. This along with \eqref{e:compar} yields \eqref{e:2.5-}, and this together with the doubling property of $1/\varphi_0$ implies the doubling property of $1/ \bar\varphi$ as well.
Furthermore, noticing that $\varphi_0$ is continuous and decreasing on $\R_+$, for all $r>0$,
$$\bar\varphi'(r)=-\frac{1}{r^2}\int_r^{2r}\varphi_0(u)\,du+\frac{1}{r}(\varphi_0(2r)-\varphi_0(r))\ge -\frac{2\varphi_0(r)}{r}$$
 and $$\bar\varphi'(r)=-\frac{1}{r^2}\int_r^{2r}\varphi_0(u)\,du+\frac{1}{r}(\varphi_0(2r)-\varphi_0(r))\le -\frac{1}{r^2}\int_r^{2r}\varphi_0(u)\,du\le -\frac{\varphi_0(2r)}{r}
 \le -
 C_2\frac{\varphi_0(r)}{r}, $$ where  in the last inequality above $
 C_2>0$ is independent of $r$ and we used the doubling property of $1/\varphi_0$. Combining both inequalities above with \eqref{e:compar}, we know that \eqref{e:2.5} holds; in particular, $\bar\varphi(r)$ is strictly decreasing on $\R_+$.

 (i) directly follows from \eqref{e:2.5}, and (ii) is a consequence of \eqref{e:2.5-} and \eqref{e:2.5}. The proof is complete.
 \qed
\end{proof}

\begin{lem}\label{L2.5} Let $\varphi:
\R_+ \to \R_+$ be a deceasing function such that $1/\varphi$ has the doubling property. Then
for any $\lambda>2$, there is a constant $C(\lambda)>0$ such that
$$\prod_{k=1}^\infty\left(\varphi \big( (\lambda-1)\lambda ^{-(k+1)}t\big)\right)^{1/ 2^{k} }\le C(\lambda)\varphi(t),\quad t>0.$$  \end{lem}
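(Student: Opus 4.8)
The plan is to reduce the statement to the polynomial form of the doubling hypothesis recorded above: since $1/\varphi$ has the doubling property, there are constants $C\ge 1$ and $\eta>0$ with
\[
\frac{\varphi(r)}{\varphi(R)}\le C\Big(\frac{R}{r}\Big)^{\eta}\qquad\text{for all }0<r\le R .
\]
Before applying this I would first observe that all the arguments occurring in the product are at most $t$: for $\lambda>2$ and $k\ge 1$ one has $(\lambda-1)\lambda^{-(k+1)}\le(\lambda-1)\lambda^{-2}\le 1$ (equivalently $\lambda^2-\lambda+1>0$), hence $(\lambda-1)\lambda^{-(k+1)}t\le t$. Taking $r=(\lambda-1)\lambda^{-(k+1)}t$ and $R=t$ in the displayed inequality then gives the factorwise bound
\[
\varphi\big((\lambda-1)\lambda^{-(k+1)}t\big)\le C\Big(\frac{\lambda^{k+1}}{\lambda-1}\Big)^{\eta}\varphi(t),\qquad k\ge 1,\ t>0 .
\]

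The second step is to raise this to the power $2^{-k}$ and multiply over $k\ge 1$. Because $\varphi>0$, the infinite product splits as $\big(\prod_{k\ge1}\varphi(t)^{2^{-k}}\big)$ times a factor not depending on $t$; since $\sum_{k\ge1}2^{-k}=1$, the first piece is exactly $\varphi(t)$, and the second is
\[
C(\lambda):=C^{\sum_{k\ge1}2^{-k}}\,(\lambda-1)^{-\eta\sum_{k\ge1}2^{-k}}\,\lambda^{\,\eta\sum_{k\ge1}(k+1)2^{-k}} .
\]
It then remains only to check that $C(\lambda)$ is finite, which follows from the elementary identities $\sum_{k\ge1}2^{-k}=1$ and $\sum_{k\ge1}(k+1)2^{-k}=\sum_{k\ge1}k2^{-k}+\sum_{k\ge1}2^{-k}=2+1=3$, giving the explicit value $C(\lambda)=C\,\lambda^{3\eta}/(\lambda-1)^{\eta}<\infty$.

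To be fully rigorous one should also note that the infinite product converges: the factorwise bound above together with the lower bound $\varphi((\lambda-1)\lambda^{-(k+1)}t)\ge\varphi(t)$ (by monotonicity, since the argument is $\le t$) shows that the $k$-th factor tends to $1$ and in fact $\sum_{k\ge1}\big|\log\big(\varphi((\lambda-1)\lambda^{-(k+1)}t)^{2^{-k}}\big)\big|<\infty$, so the product is well defined and the manipulation above is justified. I do not expect any genuine difficulty here; the only thing that requires care is the bookkeeping of the exponents of $C$, $\lambda-1$ and $\lambda$ across the product together with the verification of convergence — everything else is immediate from the doubling assumption, and in particular no use is made of monotonicity of $\varphi$ beyond positivity in the main estimate.
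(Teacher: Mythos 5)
Your proof is correct and follows essentially the same route as the paper's: both apply the polynomial form of the doubling property $\varphi(r)/\varphi(R)\le C(R/r)^{\eta}$ to each factor and then sum the exponents over $k$ using $\sum_k 2^{-k}=1$ and $\sum_k k2^{-k}=2$. The only cosmetic difference is that you compare each factor directly to $\varphi(t)$, while the paper first compares to $\varphi((\lambda-1)\lambda^{-1}t)$ and then to $\varphi(t)$; your exponent bookkeeping and the convergence remark are both fine.
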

\begin{proof} Since $1/\varphi$ is a doubling function on $\R_+$, there are constants $C,\eta>0$ such that
$$
\frac{\varphi(r)}{\varphi(R)}\le C\left(\frac{R}{r}\right)^\eta,\quad 0<r\le R.
$$
Then, for any $\lambda>2$ and $t>0$,
\begin{align*}
  \prod_{k=1}^\infty\left(\varphi \big( (\lambda-1)\lambda ^{-(k+1)}t\big)\right)^{1/ 2^{k} }
 &\le \prod_{k=1}^\infty\left(
 \varphi \big( (\lambda-1) \lambda^{-1} t\big)
  \cdot C  \lambda^{\eta k}  \right)^{1/ 2^{k} } \\
&= C \left(
  \varphi ((\lambda-1)\lambda^{-1}t)\right)^{\sum_{k=1}^\infty  2^{-k}  }
  \lambda^{ \sum_{k=1}^\infty  \eta k /2^k} \\
&\le C_1\varphi \big((\lambda-1)\lambda^{-1}t\big)\le C_2
  \varphi
(t),
\end{align*} where $C_1,C_2>0$ are independent of $t$ but depend on
$C$, $\eta$ and $\lambda$. \qed
\end{proof}

Combining Lemmas \ref{L:3.11} and \ref{L2.5} with Definition \ref{D1.1}, we have the following statement.
\begin{propn}\label{P:dou}Let $\varphi:
\R_+ \to \R_+$ be a decreasing function such that $\varphi(0)=\infty$, $\varphi(\infty)=0$ and that $1/\varphi$ has the doubling  property. Then  there exist  $\bar\varphi\in \mathcal{R}$ and a constant $c\ge1$ such that
$$c^{-1}\bar\varphi(r)\le \varphi(r)\le
c\bar\varphi(r),\quad r\in \R_+.$$ \end{propn}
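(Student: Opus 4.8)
The plan is to assemble the construction directly from the two lemmas just proved, checking the three clauses of Definition \ref{D1.1} one at a time. First I would take the function $\bar\varphi$ produced by Lemma \ref{L:3.11}: since $1/\varphi$ has the doubling property, that lemma gives a strictly decreasing $C^1$ function $\bar\varphi$ with $c_1\varphi(r)\le\bar\varphi(r)\le c_2\varphi(r)$ for all $r>0$, and with $b_1/r\le M'(r)\le b_2/r$ where $M(r):=-\log\bar\varphi(r)$. The comparison inequality in the Proposition is then immediate with $c:=\max\{c_2,1/c_1\}$, so it remains only to verify that this particular $\bar\varphi$ lies in $\mathcal{R}$.

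Next I would check Definition \ref{D1.1}(i): because $c_1\varphi(r)\le\bar\varphi(r)\le c_2\varphi(r)$ and $\varphi(0)=\infty$, $\varphi(\infty)=0$, the same limits hold for $\bar\varphi$. For clause (ii), set $N(t):=b_2/t$, which is decreasing on $\R_+$; then the bound $b_1/r\le M'(r)\le b_2/r$ from Lemma \ref{L:3.11}(i) reads $c^{-1}N(t)\le M'(t)\le cN(t)$ with $c:=b_2/b_1\ge 1$ (enlarging $c$ if $b_2/b_1<1$, which cannot happen since $b_1\le b_2$). The remaining requirement of (ii), namely $M'(t)\le c_0 M'(at)$ for $t>0$ and $a\in[1,2]$, follows since $M'(t)\le b_2/t$ while $M'(at)\ge b_1/(at)\ge b_1/(2t)$, so one may take $c_0:=2b_2/b_1$. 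Finally, clause (iii): by Lemma \ref{L2.5} applied to $\varphi$ (whose reciprocal is doubling), for each $\lambda>2$ there is $C_0(\lambda)>0$ with $\prod_{k\ge1}\big(\varphi((\lambda-1)\lambda^{-(k+1)}t)\big)^{2^{-k}}\le C_0(\lambda)\varphi(t)$ for all $t>0$; combining this with the two-sided comparison $c_1\varphi\le\bar\varphi\le c_2\varphi$ — using $\bar\varphi\le c_2\varphi$ inside the product (together with $\sum_{k\ge1}2^{-k}=1$ to control the accumulated constant $c_2^{\sum 2^{-k}}=c_2$) and $\varphi(t)\le c_1^{-1}\bar\varphi(t)$ on the right — yields $\prod_{k\ge1}\big(\bar\varphi((\lambda-1)\lambda^{-(k+1)}t)\big)^{2^{-k}}\le C(\lambda)\bar\varphi(t)$ with $C(\lambda):=c_2 C_0(\lambda)/c_1$ and $c(\lambda):=1$. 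Hence $\bar\varphi\in\mathcal{R}$ and the Proposition follows.

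I do not expect a genuine obstacle here: the content is entirely in Lemmas \ref{L:3.11} and \ref{L2.5}, and the Proposition is their bookkeeping combination. The one point that needs a little care is making sure the constants accumulated when passing the product $\prod_k(\cdot)^{2^{-k}}$ through the multiplicative comparison $c_1\varphi\le\bar\varphi\le c_2\varphi$ remain finite — this works precisely because $\sum_{k\ge1}2^{-k}=1<\infty$, so the factor $c_2$ (or $c_1^{-1}$) is raised only to a convergent exponent. A second minor point is that Definition \ref{D1.1}(iii) asks for the estimate "for any $\lambda>2$ large enough," which is weaker than what Lemma \ref{L2.5} delivers (all $\lambda>2$), so that clause is satisfied a fortiori.
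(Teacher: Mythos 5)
Your proof is correct and follows exactly the route the paper intends: the paper simply states that the proposition follows by ``combining Lemmas \ref{L:3.11} and \ref{L2.5} with Definition \ref{D1.1},'' and your argument supplies precisely that bookkeeping (taking $\bar\varphi$ from Lemma \ref{L:3.11}, reading off clause (ii) from $b_1/r\le M'(r)\le b_2/r$, and transferring Lemma \ref{L2.5} through the two-sided comparison using $\sum_k 2^{-k}=1$). No gaps.
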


\section{ On diagonal heat kernel upper bounds}\label{Section3}
In this section, we
summarize
the relations between Nash-type inequalities and on-diagonal heat kernel upper bounds from \cite{Cou}. We emphasize that the results of this section hold for all regular symmetric Dirichlet forms
which may have killings inside $E$.

\begin{propn}\label{T:2.1}
Suppose that $(P_t)_{t\geq 0}$ is a symmetric
Markov semigroup on $L^2(E; m)$ with a regular symmetric Dirichlet form $(\sE, \sF)$ on $L^2(E; m)$.
Let $\delta$ be a non-negative constant.
\begin{description}
\item{\rm (i)}   Let  $\theta: \R_+\to \R_+$
be
such that $\int^{+\infty} 1/\theta (s) \,ds <\infty$. If
\begin{equation}\label{e:nash--}
 \theta  (\|f\|_2^2)\leq
  \sE(f,f)+\delta\|f\|_2^2
\quad \hbox{for }f\in \sF \hbox{ with } \| f\|_1 \leq1,
\end{equation}
then
$$
\|P_t \|_{1\to\infty}\leq \varphi(t) e^{\delta t} \quad \hbox{for } t>0,
$$
where $\varphi(t)$ is the
inverse function of $t\mapsto\int^\infty_t 1/\theta (s)\, ds$.

\item{\rm (ii)} Suppose that
$$
\|P_t \|_{1\to\infty}\leq\varphi(t)
e^{\delta t} \quad \hbox{for } t>0,
$$
 where $\varphi(t) :\R_+\to \R_+$ is a decreasing function.
 Then
\begin{equation}\label{e:2.3}
\wt \theta  (\|f\|_2^2)\leq  \sE(f,f)
+\delta\|f\|_2^2
\quad \hbox{for }f\in \sF \hbox{ with } \| f\|_1\leq 1,
\end{equation}
where
\begin{equation}\label{e:2.4}
 \wt \theta (r):=\sup_{t>0} \frac{r}{ t}\log \frac{r}{\varphi(t)},
 \quad r>0.
\end{equation}
\end{description}
\end{propn}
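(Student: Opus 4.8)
Both assertions are Coulhon's \cite[Theorem II.5]{Cou}; the plan is to reproduce the two halves of that argument after a common reduction. First I would remove $\delta$ by passing to the semigroup $\wt P_t:=e^{-\delta t}P_t$, which is again a strongly continuous symmetric contraction semigroup on every $L^p(E;m)$ and whose associated Dirichlet form on $\sF$ is $\wt{\sE}(f,f):=\sE(f,f)+\delta\|f\|_2^2$. Under this substitution \eqref{e:nash--} becomes the plain Nash-type inequality $\theta(\|f\|_2^2)\le\wt{\sE}(f,f)$ for $\|f\|_1\le1$, the conclusion of (i) becomes $\|\wt P_t\|_{1\to\infty}\le\varphi(t)$, and the two sides of (ii) transform compatibly; so it suffices to treat the case $\delta=0$, and I drop the tildes below.

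For part (i) I would run Nash's differential-inequality argument. Fix $f\in L^1(E;m)\cap L^2(E;m)$ with $\|f\|_1\le1$ and set $u(t):=\|P_tf\|_2^2$. By the spectral theorem, $P_tf\in\sD(\sL)\subset\sF$ for $t>0$, the function $u$ is nonincreasing and $C^1$ on $(0,\infty)$ with $u'(t)=-2\sE(P_tf,P_tf)$, and since $P_t$ is an $L^1$-contraction one has $\|P_tf\|_1\le1$, so the Nash inequality applies directly to $P_tf$ (no renormalization needed) and yields $u'(t)\le-2\theta(u(t))$. Dividing by $\theta(u(t))>0$, integrating over $[s,t]$, and letting $s\downarrow0$ (so the upper limit $u(s)\le u(0)=\|f\|_2^2$ stays finite) gives $\int_{u(t)}^{\infty}dr/\theta(r)\ge2t$, i.e. $\|P_tf\|_2^2\le\varphi(2t)$ since $\varphi$ inverts $s\mapsto\int_s^\infty dr/\theta(r)$. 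By density this reads $\|P_t\|_{1\to2}^2\le\varphi(2t)$; splitting $P_t=P_{t/2}P_{t/2}$ and using self-adjointness ($\|P_{t/2}\|_{2\to\infty}=\|P_{t/2}\|_{1\to2}$) then gives $\|P_t\|_{1\to\infty}\le\|P_{t/2}\|_{1\to2}^2\le\varphi(t)$, which is the claim.

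For part (ii) I would use log-convexity of the $L^2$-decay. Fix $f\in\sF$ with $\|f\|_1\le1$; we may assume $f\ne0$ and $\sE(f,f)<\infty$, the remaining cases being trivial. With $u(t):=\|P_tf\|_2^2$, the Cauchy--Schwarz inequality applied to the spectral representation of $P_t$ shows $u$ is log-convex, so $t\mapsto\log u(t)$ is convex on $[0,\infty)$ with right derivative $-2\sE(f,f)/\|f\|_2^2$ at $0$; hence $\log u(t)\ge\log\|f\|_2^2-2t\,\sE(f,f)/\|f\|_2^2$ for all $t>0$. On the other hand, by symmetry of $P_t$ and the on-diagonal hypothesis, $u(t)=\langle P_tf,P_tf\rangle=\langle P_{2t}f,f\rangle\le\|P_{2t}f\|_\infty\|f\|_1\le\varphi(2t)\|f\|_1^2\le\varphi(2t)$. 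Comparing the two estimates of $\log u(t)$ and solving for $\sE(f,f)$ gives $\sE(f,f)\ge\frac{\|f\|_2^2}{2t}\log\frac{\|f\|_2^2}{\varphi(2t)}$ for every $t>0$; writing $s:=2t$ and taking the supremum over $s>0$ yields $\sE(f,f)\ge\wt\theta(\|f\|_2^2)$ with $\wt\theta$ as in \eqref{e:2.4}. Undoing the $\delta$-reduction then produces \eqref{e:2.3}.

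The conceptual content here is classical (Nash's ODE for (i), log-convexity of $t\mapsto\|P_tf\|_2^2$ for (ii)), so the work is mostly technical: justifying via the spectral theorem that $P_tf\in\sD(\sL)$, that $u$ is differentiable with $u'=-2\sE(P_tf,P_tf)$, and that $\log u$ is convex with the stated right derivative at $0$; the density and measurability arguments upgrading dense-class estimates to operator-norm bounds; and the minor bookkeeping when $\int_0^1 dr/\theta(r)<\infty$ (in which case $\varphi$ is only defined on a bounded interval, the bound being obtained there first and then propagated by the semigroup property). I expect the convexity-and-derivative-at-zero step in part (ii) to be the fussiest point to pin down rigorously.
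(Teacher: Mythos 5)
Your proof is correct and follows essentially the same route as the paper: the paper's entire argument is the identical reduction to $\delta=0$ via $P_t^\delta=e^{-\delta t}P_t$ with form $\sE_\delta(u,u)=\sE(u,u)+\delta\|u\|_2^2$, followed by a citation of Coulhon's Propositions II.1 and II.2. Your Nash differential-inequality argument for (i) and the log-convexity/Jensen argument for (ii) are precisely the content of those cited propositions, so you have simply unpacked the citation rather than taken a different path.
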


\pf
For any $t>0$ and $f\in L^2(E;m)$,
define $$P_t^\delta f(x)=e^{-\delta t}P_tf(x),\quad x\in E.$$ Then, $(P_t^\delta)_{t\ge0}$ is a symmetric Markov semigroup on $L^2(E;m)$ associated with the Dirichlet form
$(\sE_\delta,\sF)$, where
 $$
 \sE_\delta(u,u) :=
\sE(u,u)+\delta\|u\|_2^2,\quad u\in \sF.$$ Hence, by applying  \cite[Propositions II.1 and II.2]{Cou} to $(P_t^\delta)_{t\ge0}$ and using the fact that $\|P_t\|_{1\to \infty}=e^{\delta t}\|P_t^\delta\|_{1\to \infty}$ for all $t>0$,
we get the desired conclusion.   \qed.

\begin{remark} \rm
\begin{description}
\item{(i)} In  \cite[Proposition II.1]{Cou}, the function $\theta$ is assumed to be continuous but in fact  this continuity assumption
is not needed; see the proof of
\cite[Proposition II.1]{Cou}.
If the Dirichlet form $(\sE, \sF)$ is conservative; that is, if $P_t 1=1$ $m$-a.e. for every $t>0$,
then \eqref{e:nash--} can be weakened to hold for all  $f\in \sF $  with  $\| f\|_1 =1$;  see \cite[Remark on p.\ 513]{Cou}.
Since
$\int^\infty 1/\theta(s)\,ds<\infty$,
we can well define $\varphi(0):=\infty$ for the function $\varphi$ given in Proposition \ref{T:2.1}(i).

\item{(ii)}
 Since $(P_t)_{t\ge0}$ is symmetric on $L^2(E;m)$, $\|P_t\|_{1\to2}^2=\|P_{2t}\|_{1\to \infty}$ for all $t>0$.
Note that $\wt \theta$ defined by \eqref{e:2.4} is convex,
because
it is a supremum of a class of
convex
functions and $\widetilde \theta(0)=0$. Hence,
 \eqref{e:2.3} is equivalent to
$$
\wt \theta  (\|f\|_2^2)\leq  \sE(f,f)+ \delta\|f\|_2^2
\quad \hbox{for }f\in \sF \hbox{ with } \| f\|_1= 1.
$$
We also note that, if $\varphi(\infty)>0$, then $\widetilde \theta(r)=0$ for all $r\in (0,\varphi(\infty)]$;
if $\varphi(0)<\infty$, then $\widetilde \theta(r)=\infty$ for all $r\in (\varphi(0),\infty)$.  \end{description}
\end{remark}

\medskip

 It is clear that,
when $\theta$ is continuous, the function $\varphi(t)$ in Proposition \ref{T:2.1}(i) satisfies that
$$-\varphi'(t)=\theta(\varphi(t)),\quad t>0.$$ In order to study the equivalent characterizations between Nash-type inequalities and on diagonal heat kernel upper bounds, for a given strictly
decreasing differentiable bijection $\varphi: \R_+\to \R_+$
(in particular, $\varphi(0)=\infty$ and $\varphi(\infty)=0$) we define $\theta $  by
\begin{equation}\label{e:2.qw-}
\theta(r)= -\varphi'(\varphi^{-1}(r)),\quad r>0.\end{equation}
The following is proved in \cite[Lemma II.3]{Cou}.

\begin{lem}\label{e:2.qw}
Let $\varphi $ be a
 strictly decreasing differentiable bijection of $\R_+$ satisfying {\rm(D)}.
Then  there is a constant $c>0$ such that for all $r>0$,
 $$\widetilde \theta(r)\ge c \ \theta(r),$$
where $\widetilde \theta(r)$ and $\theta(r)$ are defined by \eqref{e:2.4} and \eqref{e:2.qw-}, respectively. \end{lem}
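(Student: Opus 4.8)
The plan is to obtain the lower bound by evaluating the supremum defining $\widetilde\theta$ at a single, well-chosen value of $t$, rather than optimizing. First I would record the identity linking $\theta$ to $M(t):=-\log\varphi(t)$. Since $M'(t)=-\varphi'(t)/\varphi(t)$, we have $-\varphi'(t)=\varphi(t)M'(t)$; hence, writing $t_0:=\varphi^{-1}(r)$ — which is well defined because $\varphi$ is a strictly decreasing bijection of $\R_+$ — and using $\varphi(t_0)=r$,
$$\theta(r)=-\varphi'(\varphi^{-1}(r))=\varphi(t_0)\,M'(t_0)=r\,M'(t_0).$$

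Next, in $\widetilde\theta(r)=\sup_{t>0}\frac{r}{t}\log\frac{r}{\varphi(t)}$ I would simply discard all terms except $t=2t_0$. Since $\varphi(t_0)=r$,
$$\frac{r}{2t_0}\log\frac{r}{\varphi(2t_0)}=\frac{r}{2t_0}\bigl(M(2t_0)-M(t_0)\bigr)=\frac{r}{2t_0}\int_{t_0}^{2t_0}M'(u)\,du.$$
Condition (D) gives $M'(u)\ge\lambda M'(t_0)$ for every $u\in[t_0,2t_0]$, so the integral is at least $\lambda t_0 M'(t_0)$, and therefore
$$\widetilde\theta(r)\ge\frac{r}{2t_0}\cdot\lambda t_0 M'(t_0)=\frac{\lambda}{2}\,r\,M'(t_0)=\frac{\lambda}{2}\,\theta(r),$$
which is the assertion with $c=\lambda/2$.

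There is essentially no serious obstacle here; the only point that needs a moment's thought is that the "obvious" choice $t=t_0$ makes $\frac{r}{t}\log\frac{r}{\varphi(t)}$ vanish, so one must instead push $t$ to the right endpoint $2t_0$ of the interval on which condition (D) is available and convert the increment $M(2t_0)-M(t_0)=\int_{t_0}^{2t_0}M'(u)\,du$ into a multiple of $M'(t_0)$ via (D). One could equally take any $t\in(t_0,2t_0]$ and then optimize the prefactor, but $t=2t_0$ already suffices and keeps the constant clean. I would also remark that no regularity on $\varphi$ beyond differentiability and condition (D) is used, consistent with the hypotheses as stated.
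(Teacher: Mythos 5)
Your argument is correct and is essentially the standard one (the paper itself only cites \cite[Lemma II.3]{Cou} for this fact, and the proof there proceeds in the same way: bound the supremum from below by the single term $t=2\varphi^{-1}(r)$ and invoke condition (D)). The only cosmetic remark is that writing $M(2t_0)-M(t_0)=\int_{t_0}^{2t_0}M'(u)\,du$ tacitly assumes $M'$ is integrable; one can sidestep this by applying the mean value theorem, $M(2t_0)-M(t_0)=t_0M'(\xi)$ with $\xi\in(t_0,2t_0)$, and then using (D) at the point $\xi$, which yields the same constant $c=\lambda/2$.
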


Note  that  by \eqref{e:2.qw-},
\begin{equation}\label{e:integral}
t_2-t_1 =\int_{\varphi(t_2)}^{\varphi (t_1)} 1/\theta(s)\,ds
\quad \hbox{for any } 0<t_1<t_2<\infty.
\end{equation}
Since  $\varphi $ is  strictly decreasing on  $\R_+$ with
$\varphi(0)=\infty$ and $\varphi(\infty)=0$.
we have by \eqref{e:integral} that  $  \int_{\varphi(t)}^\infty 1/\theta(s)\,ds =t$ for $t>0$
 and  $\int_0^\infty1/\theta(s)\,ds=\infty.$
Combining Proposition \ref{T:2.1} with Lemma \ref{e:2.qw}, we have the following equivalence between Nash-type inequalities
 and on diagonal heat kernel upper bounds
 (see \cite[Theorem II.5]{Cou}).

\begin{thm}\label{T:NASH} Let $(P_t)_{t\geq 0}$ be a symmetric Markov semigroup associated  with a regular symmetric Dirichlet form $(\sE, \sF)$ on $L^2(E; m)$.
For a
strictly decreasing  differentiable bijection $\varphi$ of $\R_+$ satisfying condition {\rm(D)},
 define $\theta(x)$ by \eqref{e:2.qw-}.
 Let $\delta$ be a non-negative constant. Then
  the following conditions are equivalent:
\begin{itemize}
\item[{\rm(i)}]
There is a constant $c_1>0$ such that
$$ \|P_t \|_{1\to\infty}\leq \varphi(c_1t)
e^{\delta t} \quad \hbox{for } t>0.
$$
\item [{\rm(ii)}]
There is a constant $c_2>0$ such that
$$c_2\,\theta(\|f\|_2^2)\le  \sE( f, f)
+\delta\|f\|_2^2
\qquad \hbox{for }f\in \sF \hbox{ with } \| f\|_1
\leq 1.
$$
\end{itemize}
\end{thm}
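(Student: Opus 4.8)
The plan is to deduce Theorem \ref{T:NASH} by combining Proposition \ref{T:2.1} with Lemma \ref{e:2.qw}; the only work beyond invoking these two results is to keep track of the multiplicative constants $c_1,c_2$, which amounts to checking that both the defining identity \eqref{e:integral} and condition {\rm (D)} behave well under the dilation $t\mapsto \kappa t$.

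\emph{Proof that {\rm (ii)} implies {\rm (i)}.} Since $\varphi$ is a strictly decreasing differentiable bijection of $\R_+$, the function $\theta$ defined by \eqref{e:2.qw-} is positive on $\R_+$, and by \eqref{e:integral} (letting $t_1\downarrow 0$ and using $\varphi(0)=\infty$) we have $\int_r^\infty 1/\theta(s)\,ds=\varphi^{-1}(r)$ for every $r>0$; in particular $\int^\infty 1/\theta(s)\,ds<\infty$, and the same holds for $c_2\theta$. Hence Proposition \ref{T:2.1}{\rm (i)}, applied with $\theta$ replaced by $c_2\theta$, gives $\|P_t\|_{1\to\infty}\le \psi(t)e^{\delta t}$ for all $t>0$, where $\psi$ is the inverse of the map $t\mapsto \int_t^\infty 1/(c_2\theta(s))\,ds=c_2^{-1}\varphi^{-1}(t)$. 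Since the inverse of $t\mapsto c_2^{-1}\varphi^{-1}(t)$ is $t\mapsto\varphi(c_2 t)$, we get $\psi(t)=\varphi(c_2 t)$, which is precisely {\rm (i)} with $c_1=c_2$.

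\emph{Proof that {\rm (i)} implies {\rm (ii)}.} Set $\psi(t):=\varphi(c_1 t)$. Then $\psi$ is again a strictly decreasing differentiable bijection of $\R_+$, and it satisfies condition {\rm (D)} with the same constant as $\varphi$, because with $M_\varphi=-\log\varphi$ and $M_\psi=-\log\psi$ one has $M_\psi'(t)=c_1 M_\varphi'(c_1 t)$, and $s\in[t,2t]$ forces $c_1 s\in[c_1 t,2c_1 t]$. By hypothesis $\|P_t\|_{1\to\infty}\le \psi(t)e^{\delta t}$, so Proposition \ref{T:2.1}{\rm (ii)} gives $\widetilde\theta(\|f\|_2^2)\le \sE(f,f)+\delta\|f\|_2^2$ for $f\in\sF$ with $\|f\|_1\le 1$, where $\widetilde\theta$ is associated to $\psi$ via \eqref{e:2.4}. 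Applying Lemma \ref{e:2.qw} to $\psi$ yields $\widetilde\theta(r)\ge c\,\theta_\psi(r)$ for some constant $c>0$, where $\theta_\psi(r)=-\psi'(\psi^{-1}(r))$. Since $\psi^{-1}(r)=c_1^{-1}\varphi^{-1}(r)$ and $\psi'(t)=c_1\varphi'(c_1 t)$, a direct computation gives $\theta_\psi(r)=c_1\theta(r)$ with $\theta$ as in \eqref{e:2.qw-}. Combining, $c\,c_1\,\theta(\|f\|_2^2)\le\widetilde\theta(\|f\|_2^2)\le\sE(f,f)+\delta\|f\|_2^2$, which is {\rm (ii)} with $c_2=c\,c_1$.

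\emph{Main obstacle.} There is no serious difficulty here: the substance of the argument is entirely contained in Proposition \ref{T:2.1} and Lemma \ref{e:2.qw}, both available from \cite{Cou}. The only point requiring care is the bookkeeping above --- verifying via \eqref{e:integral} that the inverse of $t\mapsto\int_t^\infty 1/\theta$ is $\varphi$ itself, that the dilation $t\mapsto c_1 t$ preserves condition {\rm (D)}, and that $\theta_\psi=c_1\theta$ --- so that the constants $c_1$ and $c_2$ in {\rm (i)} and {\rm (ii)} can genuinely be taken independent of $f$ and $t$.
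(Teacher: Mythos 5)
Your proposal is correct and follows exactly the route the paper intends: the paper's own "proof" is the single sentence "Combining Proposition \ref{T:2.1} with Lemma \ref{e:2.qw}, we have the following equivalence," and your argument supplies precisely that combination, with the right bookkeeping (the identity $\int_r^\infty 1/\theta(s)\,ds=\varphi^{-1}(r)$ from \eqref{e:integral}, the invariance of condition (D) under $t\mapsto c_1t$ so that the constant in Lemma \ref{e:2.qw} is unaffected, and the scaling $\theta_\psi=c_1\theta$). Nothing is missing.
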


\begin{remark}\rm According to Lemma \ref{L:3.11} (in particular, by \eqref{e:lll}), we can replace the function $\theta$
in \eqref{e:2.qw-} by
$$ \theta_*(r):=\frac{r}{\varphi^{-1}(r)},\quad r\in \R_+,
$$
when  $\varphi$ in Theorem \ref{T:NASH} has the property
 that $1/\varphi(r)$ is a doubling function on $\R_+$.  \end{remark}

The following table lists some typical regular functions $\varphi$ and
the corresponding
$\theta(r):=-\varphi'(\varphi^{-1}(r))$
(by neglecting constant  multiplies).
In the table, $\alpha>0$, $\beta\in \R$ and $\gamma\in (0,1]$.

 {\footnotesize
\begingroup
\setlength{\tabcolsep}{11pt}
\renewcommand{\arraystretch}{1.8}
 $$\hspace{-12cm}\begin{array}{|c|c|c|c|c|c|}  \hline
 \hspace{-.1cm} \varphi(r) \hspace{-.1cm} &\hspace{-.1cm} r^{-\tfrac1\alpha}  \hspace{-.1cm}&\hspace{-.1cm} r^{-\tfrac1\alpha}\log^\beta(2+r)  \hspace{-.1cm}&\hspace{-.1cm} r^{-\tfrac1\alpha}\log^\beta(2+\frac1r)  \hspace{-.1cm}&\hspace{-.1cm} r^{-\tfrac1\alpha} e^{-r^\gamma} \hspace{-.1cm}&\hspace{-.1cm} \log^\beta(1+\frac1r)\hspace{-.1cm} \\  \hline
\hspace{-.1cm}  \theta(r)   \hspace{-.1cm}&\hspace{-.1cm} r^{1+\tfrac{1}\alpha}  \hspace{-.1cm}&\hspace{-.1cm} r^{1+\tfrac{1}\alpha} \log^{-\tfrac{\beta}\alpha}(2+\frac1r)  \hspace{-.1cm}&\hspace{-.1cm} r^{1+\tfrac{1}\alpha} \log^{-\tfrac{\beta}\alpha}(2+r)  \hspace{-.1cm}&\hspace{-.1cm} r^{1+\tfrac{1}\alpha} \vee r \log^{1-\tfrac{1}\gamma}(2+\frac1r) \hspace{-.1cm}&\hspace{-.1cm} r^{1+\tfrac{1}\beta}{\mathbbm 1}_{\{0<r\le 1\}}+e^{r^{1/\beta}}{\mathbbm 1}_{\{r>1\}} \hspace{-.1cm}\\   \hline
   \end{array}\hspace{-12cm}$$
  \endgroup
  }

\section{Off-diagonal heat kernel upper bounds}\label{section4}

In this section, we will derive off-diagonal
upper bounds for $p(t, x, y)$
from Nash-type inequalities.
 Let $(P_t)_{t\geq 0}$ be a symmetric Markov semigroup on $L^2(E; m)$
associated with a regular symmetric Dirichlet form $(\sE, \sF)$
on $L^2(E; m)$  that admits no killings inside $E$.
 Following Davies' idea,
 in order to study off-diagonal heat kernel bounds for $(P_t)_{t\ge0}$,
  we  consider the  perturbed
semigroup $(P_t^\psi)_{t\ge0}$ defined by
\begin{equation}\label{e:PS}
P_t^\psi f(x):=e^{\psi(x)}   P_t(e^{-\psi}f)  (x),\quad t\ge0
\end{equation} for some nice function $\psi\in \sF_b$.

  \begin{defn}\rm
    We say a function  $\psi\in   \sF_b$
  is admissible with respect to $(\sE, \sF)$, if
  there exists a constant
   $C_0>0$ such  that
 for every  $p\in[1,\infty)$   and $f\in \sF_b$,
\begin{equation}\label{e:01-}
\sE(e^\psi f^{2p-1}, e^{-\psi} f)\ge  \frac{C_0}{p}   \sE(f^p,f^p)-
\phi(C_0, \psi, p)
\|f\|_{2p}^{2p},
\end{equation}
where
$(\psi,r)\mapsto \phi( C_0, \psi,r)$
is a non-negative function on $\sF_b\times [1,\infty)$ such that
\begin{equation}\label{e:phS}
 \phi(C_0, \psi,r)= \phi(C_0, -\psi,r),\quad \psi\in \sF_b,  \  r\ge1,
\end{equation}
and  $r\mapsto \phi(C_0, \psi,r)$ is
increasing and has the doubling property uniformly
with respect to
$\psi$ on $[1,\infty)$, i.e., there are constants
$C:=C(C_0)\ge1$ and $\eta:=\eta(C_0)>0$ such that for all $\psi\in \sF_b$,
\begin{equation}\label{e:constants}
\frac{\phi(C_0, \psi, R)}{\phi(C_0, \psi, r)}\le C\left(\frac{R}{r}\right)^\eta,\quad 1 \le r\le R.
\end{equation}
We denote the class of such bounded admissible functions $\psi$ by ${\mathcal A}(C_0;\phi,C,\eta)$.
 \end{defn}

\begin{remark}\label{R:4.2} \rm
Note that if \eqref{e:01-} holds for $C_0>0$ and
  $\phi (C_0, \psi, p)$,
  it holds for any $\wt C_0\in (0, C_0)$
  and
$  \phi (\wt   C_0, \psi, p):=\phi (C_0, \psi, p)$.
\end{remark}

\begin{example}\label{exm4.2} \rm
For a given function $\psi\in
\sF_b$ such that
$$\Lambda (\psi)^2:=\max\left\{\left\|\frac{d e^{-2\psi}\Gamma(e^{\psi}, \  \
e^\psi)}{d m}\right\|_\infty, \left\|\frac{d e^{2\psi}\Gamma(e^{-\psi}, e^{-\psi})}{d m}\right\|_\infty\right\}<\infty,$$
we have
$\psi\in \mathcal{A}(s; \phi, 5/(1-s),2)$ for any $s\in (0,1)$,
where \begin{equation}\label{e:gann}
\phi (s, \psi, r)
= \left(1+\frac{4r^2{\mathbbm 1}_{\{r>1\}}}{1-s}\right) \Lambda(\psi)^2, \quad r \ge 1.
\end{equation}
In particular, $\phi(s, \psi,1)=\Lambda(\psi)^2.$
Furthermore, if $(\sE,\sF)$ is a strongly local Dirichlet form, then $\Lambda (\psi)^2=d\Gamma(\psi,\psi)/dm$ and $\psi\in \mathcal{A}(1; \phi,1,1)$
with $\phi(1, \psi,r):=r\Lambda (\psi)^2$.

Indeed, it was proved in \cite[(3.10)]{CKS} that for all $f\in \sF_b$,
$$
\sE(e^\psi f , e^{-\psi} f)\ge     \sE(f^p,f^p)- \Lambda(\psi)^2\|f\|_{2 }^{2 }.
$$
This implies that
 \eqref{e:01-} holds for $p=1$ with $C_0=s$ and $\phi(s, \psi,1)=\Lambda(\psi)^2$
for any $s\in (0, 1]$.
On the other hand,
we know from
the end of the proof for \cite[Theorem (3.9)]{CKS} that for all $p\in [1,\infty)$ and
$f\in \sF_b$,
$$
 \sE(e^\psi f^{2p-1}, e^{-\psi} f)\ge \frac{2p-1}{p^2} \sE(f^p,f^p)-4\sE(f^p,f^p)^{1/2}\Lambda(\psi)\|f\|_{2p}^p-\Lambda(\psi)^2\|f\|_{2p}^{2p}.
$$
Since for any $a,b\ge 0$,  $p\ge1$ and  $s\in (0,1]$,
$$
\frac{2p-1}{p^2}a^2-4ab-b^2\ge \frac{s}{p}a^2-\left(1+\frac{4p^2}{(2-s)p-1}\right)b^2,
$$
we have
\begin{align} \sE(e^\psi f^{2p-1}, e^{-\psi} f)\ge &  \, \frac{s}{p}\sE(f^p,f^p)-\left(1+\frac{4p^2}{(2-s)p-1}\right)\Lambda(\psi)^2\|f\|_{2p}^{2p}\label{e:00001}\\
\ge& \, \frac{s}{p}\sE(f^p,f^p)-\left(1+\frac{4p^2}{1-s}\right)\Lambda(\psi)^2\|f\|_{2p}^{2p}\label{e:00002}
\end{align}
for all $p\in[1,\infty)$, $s\in (0, 1)$
and
$f\in \sF_b$.
(Note that, by taking $s=1/2$ in \eqref{e:00001} and using $p\ge1$, we get that for all $p\in [1,\infty)$ and
$f\in \sF_b$,
$$
\sE(e^\psi f^{2p-1}, e^{-\psi} f)\ge (2p)^{-1} \sE(f^p,f^p)-9p \Lambda (\psi)^2\|f\|_{2p}^{2p},
$$ which corrects a typo in \cite[(3.11)]{CKS}; that is,  it should be  $(2p)^{-1}$ not $p^{-1}$ in front of $\sE(f^p,f^p)$.)
In particular, \eqref{e:00002} implies that, for
   $p>1$,
 \eqref{e:01-} holds with $C_0=s$ and $\phi(s, \psi,p)= \left(1+\frac{4p^2}{1-s}\right)\Lambda(\psi)^2$ for any $s\in (0,1)$.
Thus for any $1<  r\le R$ and $s\in (0,1)$,
$$\frac{\phi(s, \psi,R)}{\phi(s, \psi,r)}\le \frac{8R^2/(1-s)}{4r^2/(1-s)}\le 2(R/r)^2.
$$
Recall that, for $p=1$,
we have \eqref{e:gann} that
$\phi(s, \psi,1)=\Lambda(\psi)^2$ for $s\in (0, 1]$. Thus for any $r>1$ and $s\in (0,1)$,
$$ \frac{\phi(s, \psi,r)}{\phi(s, \psi,1)}\le  1+\frac{4r^2}{1-s} < \frac{5}{1-s}r^2.$$
  We conclude from these  two estimates   that  \eqref{e:constants} holds with $C=5/(1-s)$ and $\eta=2$.
  In other words,    $\psi\in \mathcal{A}(s; \phi, 5/(1-s),2)$ with $\phi(s, \psi,r)$ given by \eqref{e:gann}.

For a strongly
local Dirichlet form $(\sE,\sF)$, that $\Lambda (\psi)^2=d\Gamma(\psi,\psi)/dm$ immediately follows from the Leibniz rule. By the Leibniz rule again and the Cauchy-Schwarz inequality, for all $p\ge 1$, we have
$$
\sE(e^\psi f^{2p-1}, e^{-\psi} f)\ge p^{-1} \sE(f^p,f^p)-p \Lambda (\psi)^2\|f\|_{2p}^{2p};
$$ see e.g.\ \cite[(3.2)]{CKS}.
This yields that  $\psi\in \mathcal{A}(1; \phi,1,1)$, where $\phi(1, \psi,r)
=r\Lambda (\psi)^2$.
\qed
\end{example}

Now, we present  the main result of this paper.

\begin{thm}\label{T:3.2}
Let $(P_t)_{t\geq 0}$ be a symmetric Markov semigroup on $L^2(E; m)$
associated with
a regular symmetric Dirichlet form $(\sE, \sF)$
on $L^2(E; m)$ that has no killings inside $E$.
Suppose  the following Nash-type inequality holds{\rm\,:}
\begin{equation}\label{e:nash----}
 \theta  (\|f\|_2^2)\leq \sE(f,f)
 +\delta \|f\|_2^2
\quad \hbox{for }f\in \sF \hbox{ with } \| f\|_1
\leq1,
\end{equation}
 where
$\delta\ge0$, and $\theta:\R_+\to \R_+$ satisfies that
\begin{itemize}
\item[{\rm(i)}] $\int^\infty 1/\theta(s)\,ds<\infty\,;$
\item[{\rm (ii)}] there are constant
$c_1,
c_2\ge1$ and
an increasing function $\theta_*$ on $\R_+$ so that  $ \theta_*(r)/r$ is increasing and
\begin{equation}\label{e:4.8}
c_1^{-1}\theta_*(c_2^{-1}r)\le \theta(r)\le c_1\theta_*(c_2r)  \quad \hbox{for all } r\in \R_+ \, ;
\end{equation}

\item[{\rm(iii)}] for any $\lambda>2$ large enough,  there exist positive constants $c(\lambda)$ and $C(\lambda)$ so that
\begin{equation}\label{e:con-hk}
\prod_{k=1}^\infty \left(\varphi\big((\lambda-1)\lambda^{-(k+1)}t\big)\right)^{2^{-k}}\le C(\lambda)\varphi\big(c(\lambda)t\big),\quad t>0,
\end{equation}
where   $\varphi(t)$ is the
inverse function of $t\mapsto\int^\infty_t 1/\theta (s)\, ds$.
\end{itemize}
Then there are
a properly
exceptional set ${\cal N}\subset E$
and  a heat kernel
$p(t,x,y)$
associated with the semigroup $(P_t)_{t\ge0}$ and defined on $(0,\infty)\times (E\setminus {\cal N})\times (E\setminus {\cal N})$
so that
for any $\varepsilon>0$ and $\psi\in {\mathcal A}(C_0;\phi, C,\eta)$,
\begin{equation}\label{e:offdmaineq}
 p(t,x,y)\leq C_\varepsilon\, \varphi( c_\varepsilon t)
e^{ \delta  t} \;\exp\left( -|\psi(y)-\psi(x)|+
(1+\varepsilon)   \phi(C_0, \psi,1)     t \right)
\end{equation}
for all $t>0$ and $x,y\in E\setminus \NN,$ where
  $C_\varepsilon:=C_\varepsilon(\theta, C,\eta, \eps)>0$, $c_\varepsilon:=c_\varepsilon(\theta, C,\eta, \eps, C_0)>0$, and
 the dependencies  of $C_\varepsilon$ and $c_\varepsilon$ on $\theta$ are only through
 the constants $c_1$, $c_2$, $c(\lambda)$ and $C(\lambda)$
   in  \eqref{e:4.8} and \eqref{e:con-hk}.
 \end{thm}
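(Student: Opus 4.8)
The plan is to follow Davies' strategy via the perturbed semigroup $(P_t^\psi)_{t\ge 0}$, but to extract $L^1\!\to\! L^\infty$ bounds using the Nash-type inequality \eqref{e:nash----} directly on $P_t^\psi$ together with a new iteration over the exponents $p=2^k$. First I would fix $\psi\in\mathcal A(C_0;\phi,C,\eta)$ and, by Remark \ref{R:4.2}, assume $C_0$ as small as convenient. The key computation is to differentiate $t\mapsto \|P_t^\psi f\|_{2p}^{2p}$ for $f\in L^2$; using the admissibility inequality \eqref{e:01-} one gets, writing $u=P_t^\psi f$ and $g=u^p$,
\begin{equation*}
\frac{d}{dt}\|u\|_{2p}^{2p} = -2p\,\sE(e^\psi u^{2p-1}, e^{-\psi}u) \le -2C_0\,\sE(g,g) + 2p\,\phi(C_0,\psi,p)\,\|g\|_2^2 .
\end{equation*}
Now I would apply the Nash-type inequality \eqref{e:nash----} to $g/\|g\|_1$ (noting $\|g\|_1=\|u\|_p^p$), which after a homogeneity/localization rescaling and the comparison \eqref{e:4.8} with $\theta_*$ (using that $\theta_*(r)/r$ is increasing) yields a differential inequality for $y_p(t):=\|u\|_{2p}^{2p}$ of the form $y_p' \le -c\,\theta_*\big(y_p/\|u\|_p^{2p}\big)\|u\|_p^{2p}+ Cp\,\phi(C_0,\psi,p)y_p$ after absorbing the exponential drift $e^{2p\delta t}$ by passing to $e^{-2p\delta t}u$, i.e. working with the Dirichlet form $\sE+\delta\|\cdot\|_2^2$.

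Second, I would run the iteration. At step $k$ ($p=2^{k-1}$, $k\ge1$) I would integrate the differential inequality on a time interval of length comparable to $(\lambda-1)\lambda^{-(k+1)}t$ for a fixed large $\lambda$, bounding $\|P_t^\psi\|_{2^{k-1}\to 2^k}$ by something like $\big(C\,\varphi(c\,(\lambda-1)\lambda^{-(k+1)}t)\big)^{1/2^{k}}\exp\!\big(c'\,2^{-k}\phi(C_0,\psi,2^{k-1})\,t\big)$; here the function $\varphi$, being the inverse of $r\mapsto\int_r^\infty 1/\theta$, is exactly the object that Proposition \ref{T:2.1}(i) attaches to the Nash inequality $\theta(\|f\|_2^2)\le \sE(f,f)$, and it is comparable to the $\varphi$ built from $\theta_*$ by \eqref{e:4.8}. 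Multiplying these bounds over $k\ge1$ and using telescoping $\sum 2^{-k}=1$ together with the product hypothesis \eqref{e:con-hk} collapses $\prod_k \varphi\big((\lambda-1)\lambda^{-(k+1)}t\big)^{2^{-k}}$ into $C(\lambda)\varphi(c(\lambda)t)$, giving $\|P_t^\psi\|_{1\to\infty}\le C_\varepsilon\,\varphi(c_\varepsilon t)\,e^{\delta t}\exp\big((1+\varepsilon)\phi(C_0,\psi,1)t\big)$. The $(1+\varepsilon)$ factor in front of $\phi(C_0,\psi,1)$ arises because $\sum_{k\ge1}2^{-k}\phi(C_0,\psi,2^{k-1})$ is not $\phi(C_0,\psi,1)$; the doubling bound \eqref{e:constants} makes $\phi(C_0,\psi,2^{k-1})\le C\,2^{(k-1)\eta}\phi(C_0,\psi,1)$, so one must instead split: on the first few levels (say $k\le k_0$) use the sharp constant $\phi(C_0,\psi,1)$ via \eqref{e:phS} and a careful choice of the time-splitting weights so their contribution is $\le (1+\varepsilon/2)\phi(C_0,\psi,1)t$, and on the tail levels $k>k_0$ the weights $\lambda^{-(k+1)}$ with $\lambda$ large beat the growth $2^{(k-1)\eta}$, contributing at most $(\varepsilon/2)\phi(C_0,\psi,1)t$; this forces $\lambda=\lambda(\varepsilon,\eta,C)$ large and $k_0=k_0(\varepsilon,\eta,C)$, which is consistent with \eqref{e:con-hk} being assumed "for any $\lambda>2$ large enough."

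Third, I would convert the semigroup bound into the heat kernel bound. By \cite[Theorem 3.1]{BBCK}, since $\|P_t\|_{1\to\infty}\le \varphi(c_\varepsilon t)e^{\delta t}<\infty$ there is a properly exceptional $\NN$ and a heat kernel $p(t,x,y)$; writing $p^\psi(t,x,y)=e^{\psi(x)}p(t,x,y)e^{-\psi(y)}$ for the perturbed kernel and noting $\|P_{t}^\psi\|_{1\to\infty}=\mathop{\mathrm{ess\,sup}}_{x,y}p^\psi(t,x,y)$, one gets $p(t,x,y)\le C_\varepsilon\varphi(c_\varepsilon t)e^{\delta t}\exp\big(-(\psi(x)-\psi(y))+(1+\varepsilon)\phi(C_0,\psi,1)t\big)$; replacing $\psi$ by $-\psi$ (legitimate by \eqref{e:phS}) and combining yields the $-|\psi(y)-\psi(x)|$ form \eqref{e:offdmaineq}. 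A minor technical point handled along the way: the differential inequalities above are first justified for $f\in L^2\cap L^1$ bounded and for $\psi$ bounded (so $P_t^\psi$ maps $L^q$ to $L^q$), and the $p=1$ base of the iteration needs the $L^1$-contractivity of $(P_t)$, which holds since $\sE$ has no killing. The main obstacle is precisely the bookkeeping in the second step: designing the time-splitting weights $\{t_k\}$ with $\sum_k t_k \le t$ so that simultaneously (a) each factor $\|P^\psi_{t_k}\|_{2^{k-1}\to 2^k}$ is controlled by $\varphi(c\,(\lambda-1)\lambda^{-(k+1)}t)^{2^{-k}}$ — which dictates $t_k\asymp (\lambda-1)\lambda^{-(k+1)}t$ and is where \eqref{e:con-hk} is used — and (b) the accumulated drift $\sum_k \phi(C_0,\psi,2^{k-1})t_k$ is $\le (1+\varepsilon)\phi(C_0,\psi,1)t$, which needs \eqref{e:constants} and a large-$\lambda$ argument; reconciling these two requirements while keeping the output constant $c_\varepsilon$ dependent on $\theta$ only through $c_1,c_2,c(\lambda),C(\lambda)$ is the delicate part, and is the step that replaces the differential-inequality argument of \cite[Lemma (3.21)]{CKS} that only worked for $\varphi(t)=c\,t^{-\nu/2}$.
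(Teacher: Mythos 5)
Your proposal follows essentially the same route as the paper's proof: Davies' perturbation $P_t^\psi$, the admissibility inequality \eqref{e:01-} combined with a direct application of the Nash-type inequality to the normalized function $g/\|g\|_1$, a dyadic iteration over $p=2^k$ on time intervals of length $(\lambda-1)\lambda^{-(k+1)}t$ collapsed by \eqref{e:con-hk}, the doubling bound \eqref{e:constants} with $\lambda$ large to keep the drift at $(1+\varepsilon)\phi(C_0,\psi,1)t$ (the paper does this with a single geometric-series estimate $\sum_{k\ge 1}2^{\eta k}\lambda^{-k}$ rather than your $k_0$-split, but the effect is identical), and finally duality plus \eqref{e:phS} and the $\psi\mapsto-\psi$ symmetry to pass to the kernel bound via \cite[Theorem 3.1]{BBCK}. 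This matches the paper's argument; the only detail worth making explicit when writing it up is that integrating the differential inequality requires freezing $\|f_{p,t}\|_p^{2p}$ at the left endpoint of each time interval, which uses both the monotonicity of $r\mapsto\theta_*(r)/r$ and the fact that the $\phi(\psi,p)$-corrected $L^p$ norms are decreasing in $t$.
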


We note that by the condition (ii) of the function $\theta$ it holds that $\int_0^\infty1/\theta(s)\,ds=\infty$. In particular, the function $\varphi$ in (iii) is strictly positive and strictly decreasing
on $\R_+$ with $\varphi(0)=\infty$ and $\varphi(\infty)=0$.

\medskip

To prove Theorem \ref{T:3.2}, we need the following
estimate
for $\|P_t^\psi\|_{1\to\infty}$, where $(P_t^\psi)_{t\ge0}$ is the perturbed semigroup defined by \eqref{e:PS}.

\begin{propn}\label{T:3.1}
Under the setting of Theorem $\ref{T:3.2}$,   for any $\varepsilon>0$ and $\psi\in {\mathcal A}(C_0;\phi, C,\eta)$,
 \bee\label{e:mar25-2}
\|P_t^\psi\|_{1\to\infty}\le
 C_\varepsilon\, \varphi( c_\varepsilon  t) e^{ \delta  t} \,  \exp\left(
   (1+\varepsilon)  \phi(C_0, \psi,1) t
   \right)
\quad \hbox{for all }  t>0,
 \eee
where
 $C_\varepsilon:=C_\varepsilon(\theta, C,\eta, \eps)>0$, $c_\varepsilon:=c_\varepsilon(\theta, C,\eta, \eps, C_0)>0$, and
 the dependencies  of $C_\varepsilon$ and $c_\varepsilon$ on $\theta$ are only through
 the constants $c_1$, $c_2$, $c(\lambda)$ and $C(\lambda)$
 in \eqref{e:4.8} and \eqref{e:con-hk}.
\end{propn}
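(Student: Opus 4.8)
The plan is to run a generalized Davies--Nash--Moser iteration on the perturbed semigroup $(P_t^\psi)_{t\ge 0}$, after a few reductions. First, I would normalize: by Remark \ref{R:4.2} we may assume $C_0\le 1$; by conjugation we may assume $\delta=0$, since replacing $(\sE,\sF)$ by $(\sE_\delta,\sF):=(\sE+\delta\|\cdot\|_2^2,\sF)$ and $P_t^\psi$ by $e^{-\delta t}P_t^\psi$ turns \eqref{e:nash----} into the clean Nash inequality $\theta(\|f\|_2^2)\le\sE_\delta(f,f)$ for $\|f\|_1\le1$ and, because $C_0/p\le1$, leaves $\psi$ admissible with respect to $\sE_\delta$ with the \emph{same} $\phi$; finally $\|P_t^\psi\|_{1\to\infty}=e^{\delta t}\|e^{-\delta t}P_t^\psi\|_{1\to\infty}$. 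We may also take $f\ge0$ bounded with compact support (so $u_t:=P_t^\psi f\in\sF_b$ and all $L^q$-norms of $u_t$ are finite for $t>0$), the general operator bound following by density; and we may work with the function $\theta_*$ of Definition \ref{D1.1}(ii)/condition (ii), writing $\varphi_*$ for the inverse of $t\mapsto\int_t^\infty1/\theta_*(s)\,ds$, comparable to $\varphi$ up to rescaling so that \eqref{e:con-hk} transfers.

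The analytic input is a single differential inequality. For $p\ge1$ one computes $\frac{d}{dt}\|u_t\|_{2p}^{2p}=-2p\,\sE(e^{\psi}u_t^{2p-1},e^{-\psi}u_t)$, and then \eqref{e:01-} with $f=u_t$ gives $\frac{d}{dt}\|u_t\|_{2p}^{2p}\le-2C_0\,\sE(u_t^p,u_t^p)+2p\,\phi(C_0,\psi,p)\|u_t\|_{2p}^{2p}$. Two consequences: with $p=1$, discarding the dissipative term, $\|P_t^\psi\|_{2\to2}\le e^{\phi(C_0,\psi,1)t}$; and feeding in the scale-invariant Nash inequality $\sE(v,v)\ge\|v\|_1^2\,\theta(\|v\|_2^2/\|v\|_1^2)$ for $v=u_t^p$ yields the closed inequality $\frac{d}{dt}y\le-2C_0\,M^2\theta(y/M^2)+2p\,\phi(C_0,\psi,p)\,y$ for $y=\|u_t\|_{2p}^{2p}$, $M=\|u_t\|_p^p$.

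Next I would prove an $L^2\to L^\infty$ bound. Fix $S>0$ and $\lambda>2^\eta$ large. On the stage $m\ge1$, living on $[\tau_{m-1},\tau_m]$ with $\tau_0=0$ and $\tau_m-\tau_{m-1}=(\lambda-1)\lambda^{-m}S$ (so $\sum_{m\ge1}(\tau_m-\tau_{m-1})=S$), I raise integrability from $L^{2^m}$ to $L^{2^{m+1}}$: first bound $\sup_{[\tau_{m-1},\tau_m]}\|u_t\|_{2^m}$ by $\|u_{\tau_{m-1}}\|_{2^m}$ times $e^{\phi(C_0,\psi,2^{m-1})(\tau_m-\tau_{m-1})}$ using the $p=2^{m-1}$ case of the inequality above with the Nash term dropped; then solve the closed inequality with $p=2^m$ on $[\tau_{m-1},\tau_m]$ — here the dissipative term makes the solution forget the (possibly huge) initial value $\|u_{\tau_{m-1}}\|_{2^{m+1}}^{2^{m+1}}$, using that $\theta_*(r)/r$ is increasing to freeze $M(t)$ at its supremum and that $\int^\infty 1/\theta_*<\infty$ (condition (i)) to get $y(\tau_m)\lesssim (\sup M)^2\,\varphi_*(c(\tau_m-\tau_{m-1}))\,e^{2^{m+1}\phi(C_0,\psi,2^m)(\tau_m-\tau_{m-1})}$. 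Taking $2^{m+1}$-th roots and multiplying over $m$, and using $\|u_{\tau_0}\|_2=\|f\|_2$, gives
$$
\|P_S^\psi\|_{2\to\infty}\le \prod_{m\ge1}\varphi_*\big(c(\lambda-1)\lambda^{-m}S\big)^{2^{-(m+1)}}\exp\Big(\sum_{m\ge1}\big[\phi(C_0,\psi,2^{m-1})+\phi(C_0,\psi,2^m)\big](\tau_m-\tau_{m-1})\Big).
$$
The weights $2^{-(m+1)}$ sum to $\tfrac12$, so after a harmless rescaling of $S$ and a shift of index the $\varphi_*$-product is a square root of a product of the type in \eqref{e:con-hk}, hence $\le C(\lambda)^{1/2}\varphi(c(\lambda)S)^{1/2}$; and since $\phi(C_0,\psi,2^m)\le C2^{m\eta}\phi(C_0,\psi,1)$ by \eqref{e:constants} and $\sum_m 2^{m\eta}(\lambda-1)\lambda^{-m}<\infty$ for $\lambda>2^\eta$, the exponential sum is $\le c_0\phi(C_0,\psi,1)S$ for a constant $c_0=c_0(C,\eta)$. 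Thus $\|P_S^\psi\|_{2\to\infty}\le C_\lambda\,\varphi(c_\lambda S)^{1/2}e^{c_0\phi(C_0,\psi,1)S}$, uniformly over $\psi\in{\mathcal A}(C_0;\phi,C,\eta)$.

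Finally I assemble, putting most of the time into an $L^2\to L^2$ block. Writing $P_t^\psi=P_{\eps't}^\psi\circ P_{(1-2\eps')t}^\psi\circ P_{\eps't}^\psi$ and using $\|P_{\eps't}^\psi\|_{1\to2}=\|P_{\eps't}^{-\psi}\|_{2\to\infty}$,
$$
\|P_t^\psi\|_{1\to\infty}\le \|P_{\eps't}^{\psi}\|_{2\to\infty}\;\|P_{(1-2\eps')t}^{\psi}\|_{2\to2}\;\|P_{\eps't}^{-\psi}\|_{2\to\infty}.
$$
The middle factor is $\le e^{\phi(C_0,\psi,1)(1-2\eps')t}\le e^{\phi(C_0,\psi,1)t}$; the two outer factors, using $\phi(C_0,\psi,1)=\phi(C_0,-\psi,1)$ from \eqref{e:phS}, each contribute $C_\lambda\varphi(c_\lambda\eps't)^{1/2}e^{c_0\phi(C_0,\psi,1)\eps't}$, whose product is $C_\lambda^2\varphi(c_\lambda\eps't)\,e^{2c_0\eps'\phi(C_0,\psi,1)t}$. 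Choosing $\eps'=\eps'(\eps,C,\eta)$ with $2c_0\eps'\le\eps$ and $\lambda=\lambda(\eps,C,\eta)$ large enough, and restoring $\delta$, yields $\|P_t^\psi\|_{1\to\infty}\le C_\eps\,\varphi(c_\eps t)\,e^{\delta t}e^{(1+\eps)\phi(C_0,\psi,1)t}$, with $C_\eps$ depending on $\theta$ only through $C(\lambda),c(\lambda)$ and $c_1,c_2$ (hence as stated), and $c_\eps$ additionally on $C_0$ through the $2C_0$ in the dissipative term. The main obstacle is the iteration step: making the per-stage estimate rigorous — justifying the differentiation of $t\mapsto\|u_t\|_{2p}^{2p}$ and the ODE comparison despite $t\mapsto\|u_t\|_q$ being non-monotone for the perturbed semigroup — and choosing the partition $\{\tau_m\}$ so that the resulting infinite product of $\varphi$-factors is exactly of the shape controlled by \eqref{e:con-hk}. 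The decomposition with the $L^2\to L^2$ block in the middle (which carries the bulk of the time, and thus the full $\phi(C_0,\psi,1)t$) flanked by two short $L^2\to L^\infty$ blocks (where only a crude constant $c_0$ is available) is what upgrades $c_0$ into the optimal $1+\eps$.
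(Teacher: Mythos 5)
Your proposal follows the same generalized Davies--Moser iteration as the paper's proof of Proposition \ref{T:3.1}: the differential identity $\frac{d}{dt}\|u_t\|_{2p}^{2p}=-2p\,\sE(e^{\psi}u_t^{2p-1},e^{-\psi}u_t)$ combined with \eqref{e:01-}, the Nash inequality \eqref{e:nash----} applied to $u_t^p/\|u_t^p\|_1$, a geometric time partition of ratio $\lambda^{-1}$ with $\lambda>2^{\eta}$, the infinite $\varphi$-product controlled by \eqref{e:con-hk}, the sum $\sum_k\phi(C_0,\psi,2^k)\lambda^{-k}$ controlled by \eqref{e:constants}, and the reduction to the monotone $\theta_*$ via \eqref{e:4.8}. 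Two remarks. On the assembly: the paper does not use your three-block decomposition $P^\psi_{\eps' t}\circ P^\psi_{(1-2\eps')t}\circ P^\psi_{\eps' t}$. It builds the long $L^2$ block into the $L^2\to L^\infty$ iteration itself, taking $t_k=t(1-\lambda^{-k})$ so that the first interval $[0,(1-\lambda^{-1})t]$ is the $L^2$ stage; the exponent of the $2\to\infty$ bound is then already $\frac{\lambda-1}{\lambda}\bigl(1+\frac{C2^{\eta}}{\lambda-2^{\eta}}\bigr)\phi(C_0,\psi,1)t<(1+\eps)\phi(C_0,\psi,1)t$ for $\lambda$ large, and duality plus $P^\psi_t=P^\psi_{t/2}P^\psi_{t/2}$ finish. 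Your version, in which the outer $2\to\infty$ blocks carry only a crude constant $c_0(C,\eta)$ and optimality comes from shrinking $\eps'$, is a legitimate alternative with the same dependencies of $C_\eps$ and $c_\eps$.

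The one step I would push back on is the per-stage ODE argument. As literally sketched --- freeze $M(t)=\|u_t\|_p^p$ at its supremum over the stage, then integrate $y'\le -2C_0M^2\theta(y/M^2)+2p\,\phi(C_0,\psi,p)\,y$ --- the linear growth term is not neutralized. If $M$ is frozen at a constant and you substitute $\tilde y=e^{-2p\phi(C_0,\psi,p)(t-\tau_{m-1})}y$, the dissipation acquires the damping factor $e^{-2p\phi(C_0,\psi,p)(t-\tau_{m-1})}$, which ends up inside the argument of $\varphi$; since this factor can be exponentially small in $\phi(C_0,\psi,1)S$ and $\varphi(0)=\infty$, the resulting stage bound is not the one you claim (already for $\varphi(r)=r^{-\nu/2}$ it produces an extra factor $e^{c\nu\phi(C_0,\psi,2)S}$, which destroys the constant $1+\eps$). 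The paper's device is to renormalize $f_t$ itself, setting $f_{p,t}=e^{-(\phi(C_0,\psi,p)+\delta p^{-1})t}f_t$ \emph{before} applying the Nash inequality, so that $\|f_{p,t}\|_{2p}^{2p}$ and $\|f_{p,t}\|_p^{2p}$ are rescaled by the same exponential: since the ratio $y/M^2$ is unchanged, the growth term cancels exactly and no damping factor survives, and the quantity that gets frozen is $\|f_{p,t_0}\|_p^{2p}$, whose monotonicity in $t$ for $p\ge2$ is derived from $\phi(C_0,\psi,2p)\ge\phi(C_0,\psi,p)$ rather than from a sup bound. With this renormalization your stage estimate --- which is the correct target, matching \eqref{e:newJ1---} --- and hence the rest of your argument goes through; without it the step as written fails.
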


\pf
 Without loss of generality, we assume the constant $C_0$ is less than $1$ (see Remark \ref{R:4.2}).
 We first assume that the function $r\mapsto \theta(r)/r$ is increasing on $\R_+$.
For any $\psi\in {\mathcal A}(C_0;\phi,C,\eta)$, non-negative
$f\in \sF_b$,
 $t>0$ and $p\ge1$, we write
\begin{equation}\label{e:1.3}
f_{t}(x):=P^\psi_{t} f(x)= e^{\psi(x)} \big( P_t (e^{-\psi}f) \big) (x)
\end{equation}
and
\begin{equation}\label{e:1.4}
f_{p,t}(x)
 := \exp \left(-
   \left( \phi(C_0, \psi,p)+
    \delta p^{-1} \right)  t \right) f_t(x) .
\end{equation}
For simplicity, in the following we write $\phi(\psi,p)$ for $\phi(C_0, \psi,p)$.
Note that
 since $e^\psi -1 \in \sF_b$,
$f_t \in \sF_b $ and so is $f_{p,t}$.
According to \eqref{e:01-} and
 \eqref{e:nash----},
 for any $p\ge1$ and $t>0$,
  \begin{align}
  \frac{d \|f_{p,t}\|_{2p}^{2p}}{dt}= & -
(2p\phi(\psi,p)+2 \delta)\|f_{p,t}\|_{2p}^{2p}-2p \sE(e^{\psi} (f_{p,t})^{2p-1}, e^{-\psi} f_{p,t})\nonumber\\
\le&
-2
 C_0\sE(f_{p,t}^p,f_{p,t}^p)-2   \delta \|f_{p,t}\|_{2p}^{2p}\label{e:3.5_1}\\
\le&
-2 C_0
   \big(\sE(f_{p,t}^p,f_{p,t}^p)+ \delta \|f_{p,t}\|_{2p}^{2p}\big)\nonumber \\
 =&
-2 C_0  \|f_{p,t}\|_p^{2p}\big(\sE(f_{p,t}^p/ \|f_{p,t}\|_p^{p},f_{p,t}^p/ \|f_{p,t}\|_p^{p})+\delta \|f_{p,t}\|_{2p}^{2p}/\|f_{p,t}\|_p^{2p}\big)\nonumber\\
\le&
-2
C_0  \|f_{p,t}\|_p^{2p}\theta(\|f_{p,t}\|_{2p}^{2p}/\|f_{p,t}\|_p^{2p}). \label{e:3.5}
\end{align}
 From  \eqref{e:3.5_1}, we see that $t\mapsto e^{2   \delta t}\|f_{p,t}\|_{2p}^{2p} $ is decreasing
on  $\R_+$ for every $p\geq 1$.
On the other hand, by \eqref{e:1.4},
\begin{align*}
 \|f_{2p,t}\|_{2p}^{2p}=&  \exp \left(-
 2p\phi(\psi, 2p)t-    \delta t \right)    \|f_{t}\|_{2p}^{2p} \\
  =&  \exp \left( -   2p\phi(\psi, 2p)t+2p\phi(\psi, p)t +    \delta t \right) \| f_{p,t}\|_{2p}^{2p} \\
  =&
        \exp \left( -   [2p(\phi(\psi, 2p)-\phi(\psi, p))+    \delta  ]t \right)(e^{2    \delta t} \| f_{p,t}\|_{2p}^{2p} ).
\end{align*}
  Since $\phi(\psi, 2p) \geq \phi(\psi, p)$,  it follows that
    $\|f_{ 2p,t}\|_{ 2p}^{2 p}$ is    decreasing in $t\in  \R_+$ for every $p\geq 1$.
    In other words,  $ \|f_{  p,t}\|_{  p}^{  p}$ is    decreasing in $t\in  \R_+$ for every $p\geq 2$.
     Hence we have from \eqref{e:3.5}
      and the increasing property of the function $r\mapsto
     \theta(r)/r$ on $\R_+$ that  for any $p\ge2$ and $t_0>0$,
\begin{equation}\label{e:1.5a}
\frac{d \|f_{p,t}\|_{2p}^{2p}}{dt}\le - 2 C_0\|f_{p,t_0}\|_p^{2p} \,
\theta \left(\|f_{p,t}\|_{2p}^{2p}/\|f_{p,t_0}\|_p^{2p} \right)\quad \hbox{for } t\ge t_0.
\end{equation}

Define
$$
\Psi(r)=  \frac{1}{2  C_0}\int_r^\infty \frac{ds}{\theta(s)},
 \quad r>0,
$$
which is finite by our condition {\rm(i)}.  We
can rewrite \eqref{e:1.5a}
as
$$ \frac{d }{ dt }\Psi\left(\frac{\|f_{p,t}\|_{2p}^{2p}}{\|f_{p,t_0}\|_p^{2p}}\right)
\ge 1 \quad \hbox{for } t\ge t_0.
$$
 In particular, for all $p\ge2$ and $t\ge t_0>0$,
$$ \Psi\left(\frac{\|f_{p,t}\|_{2p}^{2p}}{\|f_{p,t_0}\|_p^{2p}}\right) \ge  t-t_0 .
$$
That is,
$$
\|f_{p,t}\|_{2p}\le \left( \Psi^{-1} (t-t_0)\right)^{1/(2p)} \|f_{p,t_0}\|_p
=\left( \varphi(2  C_0 (t-t_0))\right)^{1/(2p)} \|f_{p,t_0}\|_p,\quad t\ge t_0.
$$
Consequently,  for all $p\ge2$
\begin{equation}\label{e:3.8}
\|f_{t}\|_{2p}\le \left(  \varphi(2 C_0 (t-t_0))\right)^{1/(2p)}
  \exp  \left(
  (  \phi(\psi,p)+   \delta p^{-1}) (t-t_0) \right)    \|f_{t_0}\|_p,\quad t\ge t_0.
\end{equation}

 Fix $t>0$, and,  for $k\ge1$, let $t_k=(\lambda-1)t \sum_{i=1}^{k}
  \lambda^{- i}=t(1-\lambda^{-k})$,
  where    $\lambda> 2^{\eta }$
  is  a large constant satisfying  \eqref{e:con-hk} and
 independent of $t$  to be determined later.  Applying
  \eqref{e:3.8} with  $p=
 2^k$, $t=t_{k+1}$ and $t_0=
 t_k$, we get
\begin{equation}
\label{e:newJ1---}
\|f_{t_{k+1}}\|_{2^{k+1}}\le \!\! \Big(
\varphi(2  C_0 (\lambda-1)\lambda^{-(k+1)}t)\Big)^{1/2^{k+1}}\!\!\!
  \exp \Big(  (\phi(\psi,2^k)+    \delta2^{-k})(\lambda-1)\lambda^{-(k+1)}t\Big)
\|f_{t_k}\|_{2^{k}}.
\end{equation}
Since $t\mapsto \|f_{p,t}\|_{p}$ is decreasing
on $(0,\infty)$ for all $p \ge 2$, $\|f_{2^k,t}\|_{2^{k}}\le \|f_{2^k, t_{k}}\|_{2^{k}}$ for $ k \ge 1$.
We have
by
 \eqref{e:1.4} that  for all $k\ge 1$,
\begin{equation}
\label{e:newJ1}\begin{split}
\|f_t\|_{2^{k}}&=\exp\left( ( \phi( \psi,2^k)+  \delta 2^{-k}) t\right) \|f_{2^k, t}\|_{2^{k}}\\
&\leq \exp\left( ( \phi( \psi,2^k)+  \delta 2^{-k}) t\right) \|f_{2^k, t_k}\|_{2^{k}}
\\
&=   \|f_{t_k}\|_{2^{k}}\exp\left( ( \phi( \psi,2^k)+  \delta 2^{-k})  (t-t_k)\right)\\
    & = \|f_{t_k}\|_{2^{k}}\exp\left( t ( \phi( \psi,2^k)+
    \delta 2^{-k}) \lambda^{-k} \right).\end{split}
\end{equation}

 As $\psi\in {\mathcal A}(C_0;\phi,C,\eta)$,
 we have
 \begin{align}
\label{e:newJ2}
  \frac{\phi(\psi,R)}{\phi(\psi,r)}\le C\left(\frac{R}{r}\right)^\eta  \quad \hbox{for } R\geq r\geq 1  .
\end{align}
Thus, using our assumption $\lambda> 2^{\eta }$,  \begin{align}
\label{e:newJ3}\limsup_{k\to\infty}  \big( \phi(\psi,2^k)+
    \delta 2^{-k}\big)  \lambda^{-k} \le \limsup_{k\to\infty}  \big( C (2^{\eta}/\lambda)^k\phi( \psi,1)+ \delta (2 \lambda)^{-k}\big) =0.\end{align}  \eqref{e:newJ1} and \eqref{e:newJ3}  along with the inequality \eqref{e:newJ1---} yield
\begin{equation}\label{e:1.10-}
\begin{split}
\|f_t\|_\infty =& \lim_{k\to\infty} \|f_t\|_{2^{k}} \le \limsup_{k\to \infty} \|f_{t_{k+1}}\|_{2^{k+1}}\\
 \le & \left(\prod_{k=1}^\infty \left(   \varphi\big(2 C_0
(\lambda-1)
\lambda^{-(k+1)}t\big)\right)^{1/ 2^{k+1}} \right)\\
&\times
 \exp \left(
 \lambda^{-1}(\lambda-1)t\left(\sum_{k=1}^\infty
 \phi(\psi,2^k)\lambda^{-k}+    \delta\sum_{k=1}^\infty 2^{-k}\lambda^{-k}\right) \right)  \|f_{(\lambda-1) \lambda^{-1}t}\|_{2}.
 \end{split}
\end{equation}
 Furthermore,
since $\|f_{1, s}\|_2$ is decreasing in $s\in [0, \infty)$,
$\| f_{1, s}\|_2 \leq \|f_{1, 0} \|_2=\| f\|_2$;
 that is,
$$
 \| f_s\|_2 \leq \exp \big(   (\phi(\psi,1)+    \delta)s\big) \| f\|_2,\quad  s>0.
$$
This together  with \eqref{e:1.10-}
gives us
\begin{equation}\label{e:1.10}
\begin{split}\|f_t\|_\infty \le & \left(\prod_{k=1}^\infty \left(
\varphi\big(2  C_0 (\lambda-1)
\lambda^{-(k+1)}t\big)\right)^{1/ 2^{k+1}} \right)
 \\ &\times
\exp \left(\lambda^{-1}
(\lambda-1)t\left(\sum_{k=0}^\infty
\phi(\psi,2^k)\lambda^{-k}+    \delta\sum_{k=0}^\infty 2^{-k}\lambda^{-k}\right)\right) \| f\|_2.
 \end{split}
\end{equation}

Using \eqref{e:newJ2}, we have that,  for
 $\lambda> 2^{\eta }$,
\begin{align*}
 \sum_{k=0}^\infty \phi(\psi,2^k)\lambda^{-k}
 \le \phi(\psi,1)+C \phi(\psi,1)\sum_{k=1}^\infty
 2^{\eta k}\lambda^{-k}
={\phi(\psi,1)}\Big(1+\frac{C 2^{\eta }}{\lambda -2^{\eta }}\Big).
 \end{align*}
Clearly,
$$    \delta\sum_{k=0}^\infty 2^{-k}\lambda^{-k}=\frac{   \delta}{1-2^{-1}\lambda^{-1}}.$$
These together with  \eqref{e:con-hk} and \eqref{e:1.10}  yield  that
\begin{align*}
\|f_t\|_\infty \le &
  \left( C(\lambda)\varphi(2 C_0 c(\lambda)  t)\right)^{1/2}
   \exp\left(
    t \,
     \frac{\lambda-1}\lambda \left( \phi(\psi,1) \Big(1+\frac{C 2^{\eta }}{\lambda -2^{\eta }}\Big)+\frac{    \delta}{1-2^{-1}\lambda^{-1}} \right)
  \right) \|f\|_2\\
  \le&  \left( C(\lambda)\varphi(2 C_0 c(\lambda) t)\right)^{1/2}
  \exp\left(
  t \phi(\psi,1)
    \frac{\lambda-1}\lambda\Big(1+\frac{C 2^{\eta }}{\lambda -2^{\eta }}\Big)+     \delta t
\right)\|f\|_2 .
  \end{align*}
   By the duality and \eqref{e:phS} (see \cite[p.\ 271]{CKS} for details), we get
$$
\|f_t\|_2\le    \left( C(\lambda)\varphi(2 C_0 c(\lambda) t)\right)^{1/2}
  \exp\left(
   t  \phi(\psi,1)
    \frac{\lambda-1}\lambda\Big(1+\frac{C 2^{\eta }}{\lambda -2^{\eta }}\Big)+     \delta t
\right) \|f\|_1.
$$
Since $P^\psi_t f= P^\psi_{t/2} (P^\psi_{t/2}f)$,  it follows that  for every non-negative $f\in \FF$,
\begin{equation}\label{e:kkggll}
\|f_t\|_\infty \le   C(\lambda)\varphi(  C_0 c(\lambda)
 t)
\exp\left( t
  \phi(\psi,1)
    \frac{\lambda-1}\lambda\Big(1+\frac{C 2^{\eta }}{\lambda -2^{\eta }}\Big)+   \delta t
\right) \|f\|_1.
\end{equation}
 For any $\varepsilon>0$,
taking $\lambda:=\lambda(\varepsilon,C,\eta)>
2^{\eta }$
sufficiently large such that
$$
\frac{\lambda-1}\lambda\Big(1+\frac{C 2^{\eta }}{\lambda -2^{\eta }}\Big) <1+ \eps,
$$
we conclude  that
with $C_\varepsilon:=C(\lambda)$ and
$c_\varepsilon:=C_0c(\lambda)$
(noting
  that  the constant $\lambda$  depends  on $\varepsilon$, $C$ and $\eta$ only),
$$
\| P^\psi_tf\|_\infty = \|f_t\|_\infty
\le C_\varepsilon
   \varphi(      c_\varepsilon  t) e^{     \delta t}
 \exp\left( (1+ \varepsilon )
 \phi(\psi,1)
 t \right)  \|f\|_1 \quad \hbox{for non-negative }
f\in \sF_b.
$$
The above inequality holds for general $f\in \FF_b$
by applying it  to $f_+$ and $f_-$ and then using
the triangular inequality.
 Therefore, we get the desired conclusion.

Now we consider the general case of
 $\theta$ satisfying conditions (i) and (ii). By condition (ii),
there are constants $c_1,c_2\ge1$ and an increasing function $\theta_*$ so that
 $ \theta_*(r)/r$ is increasing  and
 $$
 c_1^{-1}\theta_*(c_2^{-1}r)\le \theta(r)\le c_1\theta_*(c_2r)
 \quad \hbox{for all }   r\in \bR_+.
 $$
 Then
 \begin{equation}\label{e:4.17}
 c_2^{-2}\varphi_*((c_1c_2)^2r)\le \varphi(r)\le \varphi_*(r)  \quad \hbox{for  all }r\in \bR_+,
\end{equation}
 where $\varphi_*(r)$ is  the
 inverse
 function of $r\mapsto \int_r^\infty c_1/\theta_*(c_2^{-1}s)\,ds$.
The
function $c_1^{-1}\theta_*(c_2^{-1}r)$ in place of $\theta(r)$ satisfies conditions (i), (ii) and (iii) in Theorem \ref{T:3.2}.
Applying the above proof to $c_1^{-1}\theta_*(c_2^{-1}r)$ in place of $\theta(r)$ and noting \eqref{e:4.17},
we get the desired  assertion.
\qed

\medskip

We are now in the position to present the proofs for Theorem \ref{T:3.2}, Theorem \ref{T:thm1} and Corollary \ref{C:local}.

\medskip

\noindent
{\bf Proof of Theorem \ref{T:3.2}.}\, First, by
\eqref{e:nash----} and Proposition \ref{T:2.1}(i), we know that
$$\|P_t\|_{1\to\infty}\le \varphi(t)e^{\delta t},\quad t>0,$$ which along with   \cite[Theorem 3.1]{BBCK} yields that
there exist   a properly
exceptional set ${\cal N}\subset E$
and
 a heat kernel
 $p(t,x,y)$ on $(0,\infty)\times (E\setminus {\cal N})\times (E\setminus {\cal N})$  associated with the semigroup $(P_t)_{t\ge0}$
that satisfies
 $$
p(t,x,y)\le \varphi(t)e^{\delta t},\quad t>0, \,\, x,y\in E\setminus {\cal N}.$$
 Thus   we have by Proposition \ref{T:3.1} that for any $\varepsilon\in (0,1]$ and $\psi\in \mathcal{A}(C_0;\phi, C,\eta)$,
$$
e^{\psi(x)}p(t,x,y)e^{-\psi(y)}\le C_\varepsilon
 \varphi(      c_\varepsilon t) e^{    \delta t}
\exp\left((1+\eps)
\phi(C_0, \psi,1) t\right),
\quad t>0, \,\
 x,y\in E\setminus {\cal N},
$$
so the desired estimate follows.  \qed

 \begin{remark}\rm
 \begin{itemize}
\item[(i)]
Obviously, \eqref{e:nash----} implies that for any $\vartheta\in (0,1]$,
 \begin{align}
 \label{e:nsw}
 \vartheta\theta  (\|f\|_2^2)\le \sE(f,f)
 +\vartheta\delta \|f\|_2^2,\quad f\in \sF\hbox{ and } \|f\|_1\le 1.
  \end{align}
  Then, by Proposition \ref{T:2.1}(i), for any $t>0$,
 $\|P_t\|_{1\to\infty} \le \varphi(\vartheta t)e^{\vartheta \delta t}.$
Thus, the term $ \varphi( c_\varepsilon t)
e^{ \delta  t}$ in \eqref{e:1.5--}, \eqref{e:1.6},  \eqref{e:offdmaineq} and \eqref{e:mar25-2} can be replaced by
 $\varphi(\vartheta c_\varepsilon t)
e^{\vartheta \delta  t}$.
Note that,  \cite[Theorem 3.25]{CKS} is stated in that way with $\vartheta=\eps$.
We stated our results without introducing $\vartheta\in (0,1)$ because
it can be easily added in whenever needed through the inequality  \eqref{e:nsw},
and because the bound $\varphi(\vartheta c_\varepsilon t)
e^{\vartheta \delta  t}$ does not seem to be better than $ \varphi( c_\varepsilon t) e^{ \delta  t}$.
Note that for $\vartheta \in (0, 1)$,
   \eqref{e:nsw}  is a weaker inequality than  \eqref{e:nash----}.

\item[(ii)]
  For general $\varphi$ that satisfies \eqref{e:con-hk},
by the deceasing property of $\varphi$ on $\R_+$ and $\varphi(0)=\infty$, we can assume that  $c(\lambda)$ in \eqref{e:con-hk} is decreasing with respect to $\lambda$ and $C(\lambda)$ is increasing with respect to $\lambda$ such that
 $c(\infty)=0$
or $C(\infty)=\infty$. Thus, according to the proofs of Proposition \ref{T:3.1} and Theorem \ref{T:3.2}, we know that for the constants $c_\varepsilon$  and
$C_\eps$ in \eqref{e:mar25-2} either $\lim_{\eps\to0} c_\eps =0$ or $\lim_{\eps\to 0}C_\eps=\infty$.
 In particular, if $1/\varphi$ has the doubling property, then, by Lemma \ref{L2.5}, we can take $ c_\eps \equiv c>0$ and
$\lim_{\eps\to 0}C_\eps=\infty$.
\end{itemize}
\end{remark}

\noindent {\bf Proofs of Theorem \ref{T:thm1} and Corollary \ref{C:local}.} \,
The equivalence between (i) and (ii) has  been proved in Theorem \ref{T:NASH},
and (iii) implies (ii). So  we only need to show
that (i) implies (iii). Let $\varphi\in \mathcal{R}$, and $\theta(r)=-\varphi'(\varphi^{-1}(r))$ given in (i).
Recall that $t=\int_{\varphi(t)}^\infty 1/\theta(s)\,ds$ for all $t\in \R_+$ and $\varphi(\infty)=0$.
By property (ii) for the regular function,
there is a decreasing function $N(r): \R_+\to \R_+$ so that
$$
c_1 ^{-1} N(r)\le -\frac{\varphi '(r)}{\varphi (r)} \leq c_1 N(r)
 \quad \hbox{for  } r>0.
 $$
In other words,
 \begin{align}
\label{e:Nth}
c_1^{-1} N(\varphi^{-1}(r))\le \frac{\theta(r)}{r}\le c_1 N(\varphi^{-1}(r))  \quad \hbox{for  } r>0 .
\end{align}
Clearly $N(\varphi ^{-1} (r))$ is a decreasing function. So \eqref{e:4.8} holds with $\theta_*(r):=
r N(\varphi^{-1}(r))$.
 Property (iii) for the regular function is just \eqref{e:con-hk}. Thus by Theorem \ref{T:3.2}, there are
a properly
exceptional set ${\cal N}\subset E$
and
   a heat kernel
$p(t,x,y)$ associated with $(P_t)_{t\ge0}$ and defined on $(0,\infty)\times (E\setminus {\cal N})\times (E\setminus {\cal N})$ such that for any $\varepsilon>0$ and
$\psi\in {\mathcal A}(C_0;\phi, C,\eta)$,
there are constants $c_\varepsilon, C_\varepsilon>0$
 so that
\begin{equation}\label{e:4.19}
p(t,x,y)\leq C_\varepsilon\,
 \varphi(      c_\varepsilon t) e^{     \delta t}
\;\exp\left( -|\psi(y)-\psi(x)|+
(1+\varepsilon)  \phi(C_0,\psi,1)     t \right)
\end{equation}
 for all $t>0$,
   and  $x,y\in E\setminus \NN$.

On the other hand, for any  $\psi\in
\sF_b$ such that
$\Lambda (\psi)^2<\infty,$ by Example \ref{exm4.2}, we know that
$\psi\in \mathcal{A}(s; \phi, 5/(1-s),2)$ for any $s\in (0,1)$,
where
$$
\phi(s, \psi,r)=\left(1+\frac{4r^2{{\mathbbm 1}}_{\{r>1\}}}{1-s}\right)\Lambda(\psi)^2.
$$

 Noting that $\phi(1/2, \psi,1)=\Lambda(\psi)^2$, we have from the above two displays that
$$
p(t,x,y)\leq C_\varepsilon\,
 \varphi(    c_\varepsilon t) e^{     \delta t}   \;\exp\left( -|\psi(y)-\psi(x)|+
(1+\eps)\Lambda(\psi)^2     t \right)
$$
 for all $t>0$, and  $x,y\in E\setminus \NN$. This proves \eqref{e:1.5}.  Furthermore, by
optimizing with respect to $\psi\in \sF_b$ with $\Lambda(\psi)^2<\infty$ at the right hand side of \eqref{e:1.5}, we can get \eqref{e:1.5--}.

When $(\sE, \sF)$ is a strongly local Dirichlet form on $L^2(E; m)$,
we know from Example \ref{exm4.2} that for any $\psi\in
\sF_b$ with
$\Lambda (\psi)^2<\infty$,  $\psi\in \mathcal{A}(1; \phi,1,1)$, where
 $\phi(1, \psi,r)=r\Lambda (\psi)^2.$
With this at hand,
we deduce from Theorem \ref{T:thm1} (or from \eqref{e:4.19}) that (i) implies (iii)' in Corollary \ref{C:local}. \qed

 \begin{remark}\label{R:3.3} \rm
For anomalous symmetric Markov processes (for example, symmetric diffusions on some fractals that satisfy a sub-Gaussian estimate), the energy
measure $\Gamma(\cdot,\cdot)$ is singular
 with respect to the underlying reference measure $m$; see \cite{Ku, Hi, KM}.
 In particular, in these contexts,
the energy measure $\Gamma(u,  u)$ is singular with respect to $m$ unless $u$ is a constant,
and so
the off-diagonal estimates in Theorem \ref{T:thm1} (iii)  degenerate into the on-diagonal estimates.
However, motivated by \cite{MS1, MS2}, we can check that \eqref{e:01-} holds true by making use of
the so-called cutoff Sobolev inequality.
The reader is referred to
\cite[Proposition 2.4]{MS1}
and \cite[Lemma 3.5]{MS2}
for details on anomalous diffusions with walk dimension $d_w>2$ and on anomalous
heavy tailed random walks with
 walk dimension $d_w\ge 2$ in the $d$-set setting, respectively.
Note that, for those two processes,  their on-diagonal
heat kernels
 are of  polynomials decay
(i.e.,\ of the form $c \, t^{-d/d_w}$).
With Theorem \ref{T:3.2} at hand,
we can
extend Theorem \ref{T:thm1} and Corollary \ref{C:local} to
these singular settings, and can handle
anomalous diffusions, anomalous symmetric jump processes as well as anomalous symmetric diffusions with jumps in
general measure spaces with more general scaling functions;
see \cite{GT, CKW1, CKW2}.
\end{remark}

\noindent{\bf Proof of Corollary \ref{Cor1.4}.}\,
For any pre-compact open set $D$, consider the following Dirichlet semigroup associated with $(P_t)_{t\ge0}$:
$$P_t^Df(x)=\E^x (f(X_t){\mathbbm 1}_{\{\tau_D>t\}}),\quad f\in L^2(D;m)\,,t>0,$$ where $\tau_D$ is the
 first exit time from the open set $D$
 of the Hunt process $X=\{X_t, t\geq 0; \bP_x, x\in E\setminus \NN\}$ associated with the Dirichlet form $(\sE,\sF)$.

 Suppose that  (ii) in Theorem \ref{T:thm1} holds with $\delta=0$. Then, by \cite[Theorem 3.1]{BBCK},
we know that the semigroup $(P_t^D)_{t\ge0}$ has
  a heat kernel $p^D(t,x,y)$ defined on $(0,\infty)\times (D\setminus {\cal{N}})\times (D\setminus \cal{N})$ for some properly exceptional set $\cal{N}$. In the following, we extend $p^D(t,x,y)$ to $(0,\infty)\times D\times D$ by setting $p^D(t,x,y)=0$ when $x\in {\cal N}$ or $y\in \cal{N}$.
According to (ii) in Theorem \ref{T:thm1} with $\delta=0$ again and  the fact that $m(D)<\infty$, we know that $$\int_Dp^D(t,x,x)\,m(dx)<\infty$$ for all $t>0$, and so $P_t^D$ is a Hilbert-Schmidt operator for all $t>0$. Thus, the semigroup $(P_t^D)_{t\ge0}$ is compact. By general theory of semigroups for compact operators, there exists an orthonormal basis of eigenfunctions
 $\{ \phi_n : n \geq 1\}
\subset L^2(D;m)$ associated with corresponding eigenvalues $\{\lambda_n(D): n\geq 1\}$ satisfying  $0<\lambda_1(D)<\lambda_2(D)\le \cdots $ and $\lim_{n\to\infty}\lambda_n(D)=\infty$. In particular, for any $m$-a.e.\! $x,y\in D$ and $t>0$,
$$p^D(t,x,y)=\sum_{k=1}^\infty \exp\left(-\lambda_k(D)t\right)\phi_k(x)\phi_k(y).$$
 With this expression for $p^D(t,x,y)$, that (ii) implies (v) follows from the proof of  \cite[Theorem 2.2]{Gri}, also thanks to the property (ii) for the regular function given in Definition \ref{D1.1}.

It is trivial that (v) implies (iv). Concerning the implication of (iv)$\Rightarrow$ (ii) we refer to \cite[Lemma 5.4]{GH}. Indeed, according to the proof of \cite[Lemma 5.4]{GH},   \eqref{iso} yields that for any $f\in \sF$ with compact
support,
$$c_1\|f\|_2^2\Theta\big(c_2/\|f\|_2^2\big)\le \sE(f,f),\quad \|f\|_1=1,$$ where
 $c_1,c_2$ are positive constants independent of $f$. This with \eqref{e:Nth} finishes
 the proof. \qed

\section{Application and Examples}

\subsection{Application: off-diagonal Dirichlet heat kernel upper bounds}\label{section5.1}

 In this subsection, we assume that
\begin{itemize}
\item[(i)] $V(x,r)$ is a strictly positive function on $E \times \R_+$ such that $r\mapsto V(x,r)$ is
increasing on $\R_+$ for any fixed $x\in E$, and that
there exist constants $0<d_1\le d_2$, $0<C_1\le C_2$ and $C_3 \ge 1$ so  that
\begin{align}
\label{e:V2}
 C_1  \Big(\frac Rr\Big)^{d_1}\le \frac{V(x,R)}{V(x,r)} \leq   C_2  \Big(\frac Rr\Big)^{d_2} \quad \hbox{for all } x\in E \hbox{ and }
0<r\le R
\end{align}
and
\begin{align}
\label{e:V1}
V(x, r) \le C_3 V(y, r)  \quad
  \text{for all } r>0 \hbox{ and }  x, y \in E \hbox{ with }  \rho(x,y)<r.
\end{align}
\item[(ii)]  Let $\phi: \bR_+\to \bR_+$ be a strictly increasing continuous
function such that
 $\phi (0)=0$ and there exist constants $C_*,\beta_1>0$ so that
\begin{align}
\label{e:pW}
 \frac{\phi (R)}{\phi (r)}    \ge  C_* \Big(\frac Rr\Big)^{\beta_1}
 \quad \hbox{for all }
0<r \le R.
\end{align}
\end{itemize}

\medskip

For any $x\in E$ and $R>0$, set \begin{equation}\label{e:beta}\beta_{x ,R}(r):=\frac{1 }{V(x,R)}\max\left\{\bigg(\frac{R}{\phi^{-1}(r)} \bigg)^{d_2},  \bigg(\frac{R}{\phi^{-1}(r)} \bigg)^{d_1}\right\}, \quad r>0.\end{equation}
Note that $\beta_{x,R}:\R_+\to \R_+$ is decreasing with $\beta_{x,R}(0)=
\infty$ and $\beta_{x,R}(\infty)=0$. By some elementary calculations, we can show that
for all $x \in E$ and $R>0$,
$1/\beta_{x,R}$ has the doubling property uniformly such that
for any $0<s\le t$,
\begin{equation}\label{e:5.5}
\frac{\beta_{x,R}(s)}
{\beta_{x,R}(t)}
   \le C_*^{-1/\beta_1} (t/s)^{d_2/\beta_1}.
 \end{equation}
This is because, when
$0< s\leq t \leq \phi( R)$,
$$
\frac{\beta_{x,R}(s)}{\beta_{x,R}(t)}
  =  \frac{\phi^{-1}(t)^{d_2}}{\phi^{-1}(s)^{d_2}}
 \le C_*^{-1/\beta_1} (t/s)^{d_2/\beta_1};
  $$
 when $0< s <\phi( R)<t$,
$$
\frac{\beta_{x,R}(s)} {\beta_{x,R}(t)}
= R^{d_2-d_1} \frac{\phi^{-1}(t)^{d_1}}{\phi^{-1}(s)^{d_2}}
  \le \frac{\phi^{-1}(t)^{d_2}}{\phi^{-1}(s)^{d_2}}
 \le C_*^{-1/\beta_1} (t/s)^{d_2/\beta_1};
 $$
when $\phi( R)\leq s \leq t$,
$$
\frac{\beta_{x,R}(s)}
{\beta_{x,R}(t)} =  \frac{\phi^{-1}(t)^{d_1}}{\phi^{-1}(s)^{d_1}}
\le C_*^{-1/\beta_1} (t/s)^{d_1/\beta_1} \leq C_*^{-1/\beta_1}
 (t/s)^{d_2/\beta_1}.
 $$

Define
$$
\Psi(r)=\int_r^\infty \frac{\beta_{x,R}^{-1}(s)}{s} \,ds,\quad r>0.
$$
   We claim that
 there is a constant
  $c_1:=c_1(C_*, \beta_1, d_2)\ge1$ (independent of $x$ and $R$) such that for all $r\in \R_+$,
\begin{equation}\label{e:5.6}
\Psi(r)\le c_1\beta_{x,R}^{-1}(r).
\end{equation}
 Consequently,   by \eqref{e:5.5} there is a constant $c_2:=c_2(C_*, \beta_1, d_2)>1$ (independent of $x$ and $R$) so that
\begin{equation}\label{e:5.61}
 \Psi^{-1} (s) \leq \beta_{x,R} (s/c_1)
 \leq c_2 \beta_{x,R} (s )
\quad \hbox{for } s>0.
\end{equation}
Indeed, \eqref{e:5.5} implies that $0<s\le t$,
  $$
   \frac{ \beta_{x,R}^{-1} (t) }{ \beta_{x,R}^{-1} (s) }  \le  C_*^{ -1/d_2} (s/t)^{\beta_1/d_2}.
 $$
Thus,
\begin{align*}
\Psi (r) &= \sum_{k=0}^\infty \int_{2^k r}^{2^{k+1} r} \frac{\beta_{x,R}^{-1}(s)}{ s } \,ds  \leq
 \sum_{k=0}^\infty \frac{\beta_{x,R}^{-1}(
 2^{k}r)}{ 2^k r}
( 2^{k+1} r  -2^kr ) \\
&\leq
 C_*^{-1/d_2}
 \beta_{x,R}^{-1}(r)  \sum_{k=0}^\infty 2^{-k\beta_1/d_2} =
\frac{C_*^{-1/d_2}} {1-2^{-\beta_1/d_2}}
\beta_{x,R}^{-1}(r).
\end{align*}
This establishes the claim \eqref{e:5.6}.

\medskip

Suppose that $(\sE,\sF)$ is a symmetric regular Dirichlet form on $L^2(E;m)$
having no killings inside $E$, and that $(P_t)$ is its associated semigroup. For an open set $B\subset E$, let $(P_t^B)_{t\ge0}$ be
the Dirichlet heat semigroup associated with $(P_t)_{t\ge0}$ with the Dirichlet conditions on $B^c$, i.e.,
$$P^B_tf(x)=\bE_x[f(X_t){{\mathbbm 1}}_{\{t< \tau_B\}}],\quad f\in L^2(B;m), \,\,t>0,\,\,x\in B,
$$
 where $\tau_B$ is the first exit time from the open set $B$ by
 the Hunt process $X=\{X_t, t\geq 0; \bP_x, x\in E\setminus \NN\}$ associated with the Dirichlet form $(\sE,\sF)$.

\begin{thm}\label{T:4.1}
Assume that
\eqref{e:V2}, \eqref{e:V1} and  \eqref{e:pW} hold.
Suppose that the semigroup $(P_t)_{t\ge0}$ has
   a heat kernel $p(t,x,y)$ defined on $(0,\infty)\times (E\setminus \NN)\times (E\setminus \NN)$ with
\begin{equation}\label{e:4.1} p(t,x,x)\le \frac{c}{V(x,\phi^{-1}(t))},\quad t>0, \,\, x\in E\setminus \NN,\end{equation} where $c>0$ is independent of $x$ and $t$. Then, for any $x_0\in M$ and $R>0$,
 the Dirichlet heat semigroup
 $(P_t^{B(x_0, R)})_{t\ge0}$
 has  a heat kernel
 $p^{B(x_0, R)} (t,x,y)$
  defined on $(0,\infty)\times (E\setminus \NN)\times (E\setminus \NN)$ such that
  \begin{itemize}
  \item[{\rm(i)}]  $p^{B(x_0, R)} (t,x,y)=0$ if either  $x$ or $y$ is in $ B (x_0, R)^c \setminus \NN ;$
\item[{\rm(ii)}] for any $\varepsilon>0$ and $\psi\in {\mathcal A}(C_0;\phi, C,\eta)$,
there is a constant $C_\varepsilon:=C_\varepsilon(C,\eta,d_1,d_2, C_0, C_1,C_2,$ $C_3, C_*, \beta_1, c)>0$
 such that for all $t>0$ and $x,y\in B (x_0, R)\setminus \NN,$
$$
p^{B(x_0, R)} (t,x,y)\leq C_\varepsilon\,\beta_{x_0,R}(t) \,\exp\left( -|\psi(y)-\psi(x)|+
(1+\varepsilon)   \phi(C_0, \psi,1)     t \right),
$$
where  $ \beta_{x_0,R}(r)$ is defined by \eqref{e:beta}.
In particular, for any $\varepsilon>0$, there is a constant
 $C_\varepsilon:=C_\varepsilon(d_1,d_2, C_1,C_2,$ $C_3, C_*, \beta_1, c)>0$
so that for all $t>0$, $x_0\in E$, $R>0$ and $x,y\in B(x_0,R)\setminus {\cal N}$,
$$
p^{B(x_0, R)} (t,x,y)\leq C_\varepsilon\,\beta_{x_0,R}(t) \,\exp\left( -|\psi(y)-\psi(x)|+
(1+\eps)\Lambda(\psi)^2     t \right),
$$ where $$\Lambda (\psi)^2 :=
  \max\left\{\left\|\frac{d e^{-2\psi}\Gamma(e^{\psi}, e^\psi)}{d m}\right\|_\infty,
  \  \left\|\frac{d e^{2\psi}\Gamma(e^{-\psi}, e^{-\psi})}{d m}\right\|_\infty\right\} <\infty.$$ \end{itemize}
\end{thm}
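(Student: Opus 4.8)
The strategy is to apply Theorem \ref{T:3.2} to the part Dirichlet form $(\sE,\sF_{B})$ on $L^2(B;m)$, where $B=B(x_0,R)$, after first converting the pointwise on-diagonal bound \eqref{e:4.1} into an on-diagonal bound for the Dirichlet heat kernel $p^{B}$ comparable to $\beta_{x_0,R}(t)$, and then feeding the resulting Nash-type inequality into Theorem \ref{T:3.2} with a rate function built from $\beta_{x_0,R}$. The point throughout is to obtain constants uniform in $x_0$ and $R$, and the uniform estimates \eqref{e:5.5}, \eqref{e:5.6} and \eqref{e:5.61} set up in this subsection are exactly what makes that possible.

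First I would fix $x_0\in E$, $R>0$, put $B=B(x_0,R)$, and note that $(\sE,\sF_{B})$ is a regular symmetric Dirichlet form on $L^2(B;m)$ with no killing inside $B$ (the killing measure of $\sE$ vanishes and restricting the domain adds no interior killing). Using $p^{B}(t,x,y)\le p(t,x,y)$, for $x\in B\setminus\NN$ we get $p^{B}(t,x,x)\le c/V(x,\phi^{-1}(t))$; I would split into the regimes $\phi^{-1}(t)\le R$ and $\phi^{-1}(t)>R$. In the first regime, \eqref{e:V2} gives $V(x,R)/V(x,\phi^{-1}(t))\le C_{2}\big(R/\phi^{-1}(t)\big)^{d_{2}}$, while $\rho(x,x_0)<R$ together with \eqref{e:V1} gives $V(x_0,R)\le C_{3}V(x,R)$; plugging in and recalling \eqref{e:beta} yields $p^{B}(t,x,x)\le cC_{2}C_{3}\,\beta_{x_0,R}(t)$. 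In the second regime, \eqref{e:V2} gives $V(x,\phi^{-1}(t))/V(x,R)\ge C_{1}\big(\phi^{-1}(t)/R\big)^{d_{1}}$, and the same computation yields $p^{B}(t,x,x)\le (cC_{3}/C_{1})\,\beta_{x_0,R}(t)$. Hence $p^{B}(t,x,x)\le c'\beta_{x_0,R}(t)$ for all $t>0$ and $x\in B\setminus\NN$, with $c'=c'(c,C_{1},C_{2},C_{3})$, so by symmetry, the Chapman--Kolmogorov identity and the Cauchy--Schwarz inequality, $\|P_{t}^{B}\|_{1\to\infty}\le c'\beta_{x_0,R}(t)$. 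In particular this norm is finite, so by \cite[Theorem 3.1]{BBCK} the semigroup $(P_{t}^{B})_{t\ge0}$ has a heat kernel $p^{B}(t,x,y)$ off a properly exceptional set, which via the Dynkin--Hunt formula $p^{B}(t,x,y)=p(t,x,y)-\bE_{x}[p(t-\tau_{B},X_{\tau_{B}},y);\tau_{B}<t]$ may be taken inside $\NN$; part (i) of the conclusion is immediate from the definition of the part process.

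Next I would turn $\|P_{t}^{B}\|_{1\to\infty}\le c'\beta_{x_0,R}(t)$ into a Nash-type inequality for $(\sE,\sF_{B})$. Since $\beta_{x_0,R}(0)=\infty$, $\beta_{x_0,R}(\infty)=0$ and $1/\beta_{x_0,R}$ has the doubling property uniformly in $x_0,R$ by \eqref{e:5.5}, it follows from Proposition \ref{T:2.1}(ii), Lemma \ref{e:2.qw} and Lemma \ref{L:3.11} (the last applied to a $C^{1}$ function comparable to $\beta_{x_0,R}$; cf.\ Proposition \ref{P:dou}) that $(\sE,\sF_{B})$ satisfies \eqref{e:nash----} with $\delta=0$ and $\theta(r)=c_{0}\,r/\beta_{x_0,R}^{-1}(r)$, where $c_{0}>0$ is independent of $x_0,R$ and $\beta_{x_0,R}^{-1}$ is the inverse of the decreasing function $\beta_{x_0,R}$. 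I would then verify the hypotheses of Theorem \ref{T:3.2} for this $\theta$, with constants independent of $x_0,R$: condition (i) is $\int^{\infty}1/\theta(s)\,ds=c_{0}^{-1}\Psi<\infty$, which is \eqref{e:5.6}; condition (ii) holds with $\theta_{*}=\theta$ and $c_{1}=c_{2}=1$, since $\beta_{x_0,R}^{-1}$ is decreasing so both $\theta$ and $\theta(r)/r=c_{0}/\beta_{x_0,R}^{-1}(r)$ are increasing; and condition (iii) holds because the associated $\varphi$ is $c_{0}^{-1}\Psi^{-1}$, comparable to $\beta_{x_0,R}$ by \eqref{e:5.61}, so \eqref{e:con-hk} follows from Lemma \ref{L2.5} applied to $\beta_{x_0,R}$ with $C(\lambda)$ depending only on $C_{*},\beta_{1},d_{2},\lambda$. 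I would also note that any $\psi$ admissible for $(\sE,\sF)$ works for the part semigroup: \eqref{e:01-} restricts from $\sF_{b}$ to $(\sF_{B})_{b}$, and $e^{\psi}P_{t}^{B}(e^{-\psi}\,\cdot\,)$ sends functions supported in $B$ to functions supported in $B$, so the proof of Proposition \ref{T:3.1} goes through verbatim for $(P_{t}^{B})_{t\ge0}$.

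Finally, applying Theorem \ref{T:3.2} to $(\sE,\sF_{B})$ produces a heat kernel $p^{B}$ with
$$
p^{B}(t,x,y)\le C_{\varepsilon}\,\varphi(c_{\varepsilon}t)\,\exp\!\big(-|\psi(y)-\psi(x)|+(1+\varepsilon)\,\phi(C_{0},\psi,1)\,t\big),\qquad \varphi=c_{0}^{-1}\Psi^{-1},
$$
where, by the last sentence of Theorem \ref{T:3.2}, $C_{\varepsilon}$ and $c_{\varepsilon}$ depend on $\theta$ only through $c_{1},c_{2},c(\lambda),C(\lambda)$ --- hence only through $C_{*},\beta_{1},d_{2}$ --- and on $C,\eta,C_{0},\varepsilon$. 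Using \eqref{e:5.61} to replace $\varphi(c_{\varepsilon}t)$ by a multiple of $\beta_{x_0,R}(c_{\varepsilon}t)$, and then \eqref{e:5.5} to bound $\beta_{x_0,R}(c_{\varepsilon}t)\le c''\beta_{x_0,R}(t)$ with $c''$ independent of $x_0,R,t$, gives statement (ii). The ``in particular'' assertion follows by taking $\psi\in\mathcal A(1/2;\phi,10,2)$ as in Example \ref{exm4.2}, for which $\phi(1/2,\psi,1)=\Lambda(\psi)^{2}$ and the triple $(C,\eta,C_{0})=(10,2,1/2)$ is universal. I expect the main obstacle to be not any single step but the bookkeeping that keeps every constant --- in the on-diagonal estimate, in the passage to the Nash-type inequality, and on exit from Theorem \ref{T:3.2} --- independent of $x_0$ and $R$; the uniform bounds \eqref{e:5.5}--\eqref{e:5.61} and the transfer of admissibility of $\psi$ to the part form are precisely what carry this through.
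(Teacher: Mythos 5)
Your proposal is correct in substance but reaches the functional inequality for the part form by a genuinely different route than the paper. The paper does not pass through an on-diagonal bound for $p^{B}$ at all: it first proves a super-Poincar\'e inequality \eqref{SPI} for $(\sE,\sF_{B})$ directly, by combining Kigami's local Nash inequality $\|u\|_2^2\le t\,\sE(u,u)+\|u\|_1^2\sup_{x\in{\rm supp}(u)}p(t,x,x)$ with \eqref{e:4.1}, \eqref{e:V2} and \eqref{e:V1}, and then runs the Davies iteration on $P_t^{B,\psi}$ by optimizing the free parameter $r$ in \eqref{SPI} at each step (this is exactly the ``super-Poincar\'e instead of Nash'' variant advertised in the introduction). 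You instead bound $p^{B}(t,x,x)\le c'\beta_{x_0,R}(t)$ via $p^{B}\le p$ and the volume comparisons, convert this to a Nash-type inequality through Proposition \ref{T:2.1}(ii), Lemma \ref{e:2.qw} and the $C^1$-regularization of Lemma \ref{L:3.11}, and then feed it into Theorem \ref{T:3.2}. Both routes work and both rest on the same uniform estimates \eqref{e:5.5}--\eqref{e:5.61} to keep constants independent of $x_0$ and $R$; yours is a cleaner black-box reduction to Theorem \ref{T:3.2}, at the price of extra comparability bookkeeping (regularizing and inverting $\beta_{x_0,R}$, tracking the condition-(D) constant through Lemma \ref{e:2.qw}), while the paper's super-Poincar\'e route avoids inverting $\beta_{x_0,R}$ altogether and plugs straight into the differential inequality.

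One justification in your write-up is wrong, though harmlessly so: for a non-local form the part form $(\sE,\sF_{B})$ does \emph{not} have vanishing killing measure inside $B$ --- the jumps from $B$ to $B^c$ become interior killing for the part process --- so the hypotheses of Theorem \ref{T:3.2} are not literally satisfied by $(\sE,\sF_{B})$ viewed as a Dirichlet form on $L^2(B;m)$. What saves the argument is precisely the observation you make separately: the admissibility inequality \eqref{e:01-} is an inequality for the \emph{original} form $\sE$ and restricts from $\sF_b$ to $(\sF_{B})_b$, and the derivative identity for $\|P_t^{B,\psi}f\|_{2p}^{2p}$ only uses that the generator of $P_t^{B}$ is associated with $(\sE,\sF_{B})$; so the proof of Proposition \ref{T:3.1} runs verbatim for the part semigroup, which is exactly how the paper proceeds. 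You should drop the ``no interior killing'' parenthetical and rely on that restriction argument instead.
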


\medskip

To prove Theorem \ref{T:4.1}, we need to establish functional inequalities for the Dirichlet form $(\sE, \sF_B)$ associated with
 $(P_t^B)_{t\ge0}$, where
$ \sF_B=\{ u\in \sF:  u=0 \,\,\EE \hbox{-q.e. on } B^c \}$.
\begin{lem}
Suppose that  \eqref{e:V2}, \eqref{e:V1} and  \eqref{e:4.1} hold.
Then, there exists a constant
 $\wh C:=\wh C(c, C_1,  C_2, C_3)>0$ such that
 for any $x_0\in
E$ and $R>0$,
the following super-Poincar\'e inequality
 for $(\sE, \sF_{B(x_0,R)})$ holds{\rm\,:}
\begin{equation}\label{SPI}
     \left\| u \right\|^2_2 \le r \sE(u,u)+\wh C\beta_{x_0,R}(r)\left\| u \right\|^2_1,\quad r>0,\, u\in \sF_{B(x_0,R)},
\end{equation}
where $ \beta_{x,R}$ is defined by \eqref{e:beta}.
\end{lem}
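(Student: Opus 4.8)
My plan is to deduce from the three hypotheses a single on-diagonal bound for the part semigroup $(P_t^{B})_{t\ge0}$, where $B:=B(x_0,R)$, namely
\[
\|P_t^{B}\|_{1\to\infty}\le \wh C\,\beta_{x_0,R}(t)\qquad\text{for every }t>0,\qquad \wh C:=c\,C_3\max\{C_2,\,C_1^{-1}\},
\]
and then to convert this $L^1\to L^\infty$ estimate into \eqref{SPI} by the standard semigroup argument. Note that $\wh C$ depends only on $c,C_1,C_2,C_3$ as asserted, since the ``dimensional'' exponents $d_1,d_2$ are already absorbed into $\beta_{x_0,R}$.

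For the on-diagonal bound I would first use domain monotonicity of killed semigroups: $0\le P_t^{B}f\le P_tf$ for $0\le f\in L^1(B)$, so for $x\in B$, by Chapman--Kolmogorov and Cauchy--Schwarz,
\[
P_t^{B}f(x)\le\int_B p(t,x,y)f(y)\,m(dy)\le\Big(\sup_{y\in B}\sqrt{p(t,x,x)\,p(t,y,y)}\Big)\|f\|_1 .
\]
Thus it suffices to bound $p(t,x,x)$, hence by \eqref{e:4.1} to bound $1/V(x,\phi^{-1}(t))$, for $x\in B$. If $t\le\phi(R)$ then $\phi^{-1}(t)\le R$, and combining the upper bound in \eqref{e:V2} at radii $\phi^{-1}(t)\le R$ with \eqref{e:V1} at radius $R$ (legitimate since $\rho(x,x_0)<R$) gives
\[
V(x,\phi^{-1}(t))\ \ge\ \frac{V(x,R)}{C_2\,(R/\phi^{-1}(t))^{d_2}}\ \ge\ \frac{V(x_0,R)}{C_2C_3\,(R/\phi^{-1}(t))^{d_2}}=\frac{1}{C_2C_3\,\beta_{x_0,R}(t)},
\]
since $\beta_{x_0,R}(t)=(R/\phi^{-1}(t))^{d_2}/V(x_0,R)$ in this range. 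If instead $t>\phi(R)$ then $\phi^{-1}(t)>R>\rho(x,x_0)$, so \eqref{e:V1} can be applied \emph{directly at radius $\phi^{-1}(t)$}, and combined with the \emph{lower} bound in \eqref{e:V2} (the reverse doubling, involving $C_1,d_1$) at radii $R\le\phi^{-1}(t)$ it gives
\[
V(x,\phi^{-1}(t))\ \ge\ \frac{V(x_0,\phi^{-1}(t))}{C_3}\ \ge\ \frac{C_1\,(\phi^{-1}(t)/R)^{d_1}\,V(x_0,R)}{C_3}=\frac{1}{C_1^{-1}C_3\,\beta_{x_0,R}(t)},
\]
since now $\beta_{x_0,R}(t)=(R/\phi^{-1}(t))^{d_1}/V(x_0,R)$. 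Together the two cases yield $p(t,x,x)\le\wh C\,\beta_{x_0,R}(t)$ for all $x\in B$ and $t>0$, hence $\|P_t^{B}\|_{1\to\infty}\le\wh C\,\beta_{x_0,R}(t)$.

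For the conversion, fix $u\in\sF_{B}$ and $r>0$ and write $\|u\|_2^2=\langle u-P_r^{B}u,u\rangle+\langle P_r^{B}u,u\rangle$. The generator of $(P_t^{B})_{t\ge0}$ on $L^2(B;m)$ is self-adjoint and non-negative with form domain $\sF_{B}$, so by the spectral theorem and the inequality $1-e^{-\lambda r}\le\lambda r$ ($\lambda\ge0$) we get $\langle u-P_r^{B}u,u\rangle\le r\,\sE(u,u)$, while $\langle P_r^{B}u,u\rangle\le\|P_r^{B}u\|_\infty\|u\|_1\le\|P_r^{B}\|_{1\to\infty}\|u\|_1^2\le\wh C\,\beta_{x_0,R}(r)\|u\|_1^2$. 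Adding these two estimates establishes \eqref{SPI}.

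I expect the only non-routine point to be the large-time decay, i.e.\ the bound $\|P_t^{B}\|_{1\to\infty}\le\wh C\,\beta_{x_0,R}(t)$ for $t>\phi(R)$: the crude estimate $V(x,\phi^{-1}(t))\ge V(x,R)$ would produce only the $t$-independent plateau $\wh C/V(x_0,R)$, which exceeds $\wh C\beta_{x_0,R}(t)$. The fix, carried out above, is that once $\phi^{-1}(t)>R$ the ball $B(x,\phi^{-1}(t))$ contains the reference point $x_0$, so \eqref{e:V1} compares its volume with $V(x_0,\phi^{-1}(t))$, whose growth relative to $V(x_0,R)$ is then furnished by the reverse volume-doubling bound in \eqref{e:V2}. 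Everything else --- domain monotonicity, Chapman--Kolmogorov, and the spectral estimate --- is standard, and the existence of $p(t,x,y)$ used above is part of the hypothesis of Theorem~\ref{T:4.1}.
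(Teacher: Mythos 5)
Your proof is correct, and it reaches \eqref{SPI} by a route that is organized differently from the paper's, though the analytic substance is the same. The paper does not pass to the killed semigroup at all: it invokes Kigami's local Nash inequality \cite[Theorem 2.1]{Ki} for the \emph{full} semigroup, which gives $\|u\|_2^2\le t\,\sE(u,u)+\|u\|_1^2\sup_{x\in{\rm supp}(u)}p(t,x,x)$ for $u\in\sF\cap L^1(E;m)$, then restricts to $u\in\sF\cap C_c(B)$, bounds $\inf_{z\in{\rm supp}(u)}V(z,s)$ from below by $V(x_0,R)/\bigl(C_3(C_1^{-1}\vee C_2)\max\{(R/s)^{d_2},(R/s)^{d_1}\}\bigr)$ using $V(x_0,R)\le C_3V(z,R)$ (i.e.\ \eqref{e:V1} applied only at radius $R$) together with both halves of \eqref{e:V2} applied with center $z$, and finally extends from $\sF\cap C_c(B)$ to $\sF_B$ by $\sE_1$-density. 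You instead first prove the on-diagonal bound $\|P_t^{B}\|_{1\to\infty}\le\wh C\beta_{x_0,R}(t)$ for the part semigroup via domain monotonicity, handling the regime $t>\phi(R)$ by applying \eqref{e:V1} at the large radius $\phi^{-1}(t)$ and reverse doubling centered at $x_0$; then you convert by the spectral decomposition of the part generator, which in effect reproves the cited Kigami/Coulhon-type implication rather than quoting it. Both resolutions of the large-time regime are equivalent (each produces the $d_1$ branch of $\beta_{x_0,R}$ with the same constant $cC_3(C_1^{-1}\vee C_2)$), and your version has the minor advantages of being self-contained and of applying directly to all $u\in\sF_B$ without a separate density step; the paper's version is slightly shorter because the localization happens only through the support of $u$ and \eqref{e:V1} is needed only at the single radius $R$.
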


\begin{proof}
By the Cauchy-Schwarz inequality,   for any $x,y\in E\backslash \NN$ and $t>0$,
$$
p(t,x,y)\le (p(t,x,x)p(t,y,y))^{1/2} \le \max\left\{ p(t,x,x),p(t,y,y)\right\}.
$$
This along with \cite[Theorem 2.1]{Ki} and \eqref{e:4.1} yields that for all $u\in \sF\cap L^1(E;m)$ and $t>0$,
\begin{align*}\|u\|_2^2&\le t
\sE (u, u)
+\|u\|_1^2\sup_{x\in{\rm supp}(u)} p(t,x,x)\\
&\le t \sE(u,u)+\|u\|_1^2 \, \frac{c }{\inf_{x \in {\rm supp}(u)}V(x,\phi^{-1}(t))}.  \end{align*}
Consequently, for all $u\in \sF\cap L^1(E;m)$ and $s>0$,
\begin{equation}\label{e:nash}
  \left\| u \right\|^2_2   \le  \phi(s)\sE(u,u)+  \frac{c \left\| u \right\|^2_1}{\inf_{z \in {\rm supp}(u)}V(z,s)}.
\end{equation}

 For fixed $x_0\in E$ and $R>0$, set $B=B(x_0,R)$.
 By
    \eqref{e:V2} and \eqref{e:V1}, we have that
   for any $z \in B$ and $s>0$,
\begin{align*}
  \frac{V(x_0,R)}{V(z,s)}\le& C_3
   \frac{V(z,R)}{V(z,s)}
  \le  C_3 (C_1^{-1} \vee C_2)\max\left\{\Big(\frac{R}{s} \Big)^{d_2},  \Big(\frac{R}{s} \Big)^{d_1}\right\}.\end{align*}
  Combining this with \eqref{e:nash}, we obtain that for any
  $u\in \sF\cap C_c(B)\subset \sF_B$,
  and $s>0$,
\begin{align*}
   \left\| u \right\|^2_2
   &\le  \phi(s)\sE(u,u)+ \frac{
   \wh C }{V(x_0,R)}\max\left\{\bigg(\frac{R}{s} \bigg)^{d_2},  \bigg(\frac{R}{s} \bigg)^{d_1}\right\}\left\| u \right\|^2_1 \\
   &=     \phi(s)\sE(u,u)+ \wh C \beta_{x_0,R}(s) \| u\|_1^2,
\end{align*}
where
$\wh C=c C_3 (C_1^{-1} \vee C_2).$
As $\sF_b \cap C_c(B)$ is $\sE_1$-dense in $\sF_B$ and uniformly dense in $C_c(B)$,
the desired assertion follows.
\qed
\end{proof}

\noindent {\bf Proof of Theorem \ref{T:4.1}  }\,\,
Note that the existence of  $p^{B(x_0, R)} (t,x,y)$  as well as the property (i) of  $p^{B(x_0, R)} (t,x,y)$   follow from the existence of  $p(t,x,y)$ and the strong Markov property of the process $X$.
So it remains to establish property (ii).

In the following, we fix $x_0\in E$ and $R>0$, and set $B=B(x_0,R)$. For any $\psi\in {\mathcal A}(C_0;\phi, C,\eta)$, non-negative
$f\in \sF_b$, $t>0$ and $p\ge1$,
$$
f_{t}(x): = P_t^{B,\psi}f(x)=e^{\psi(x)} P^B_t (e^{-\psi}f)(x),\quad
f_{p,t}(x)
 := \exp \left(-  \phi(C_0, \psi,p)
  t \right) f_t(x), \quad x \in B.
$$
Note that $f_t\in \sF_B$ and so $f_{p,t} \in \sF_B$. According to \eqref{e:01-} and
\eqref{SPI},
 for any $p\ge1$, $t>0$ and $r>0$,
\begin{align*}
\frac{d \|f_{p,t}\|_{2p}^{2p}}{dt}= & -
2p   \phi(C_0, \psi,p)\|f_{p,t}\|_{2p}^{2p}-2p \sE(e^{\psi} (f_{p,t})^{2p-1}, e^{-\psi} f_{p,t})\le
{-} 2
C_0 \sE(f_{p,t}^p,f_{p,t}^p)\\
\le& 2C_0\left(-\frac{1}{r}\|f_{p,t}\|_{2p}^{2p}+\frac{
\wh C\beta_{x_0,R}(r)}{r}\|f_{p,t}\|_{p}^{2p}\right).
\end{align*}
Taking
$$r=
\beta_{x_0,R}^{-1}\left(\frac{\|f_{p,t}\|_{2p}^{2p}}{2\wh C\|f_{p,t}\|_p^{2p}}\right),$$
we have
\begin{equation}
 \label{e:3.55}
\frac{d \|f_{p,t}\|_{2p}^{2p}}{dt}\le -\frac{C_0\|f_{p,t}\|_{2p}^{2p}}{
\beta_{x_0,R}^{-1}\left(\frac{\|f_{p,t}\|_{2p}^{2p}}{2\wh C\|f_{p,t}\|_p^{2p}}\right)}<0
\quad \hbox{for every } t>0.
\end{equation}
Define
$$
\Psi(r)=\int_r^\infty \frac{
\beta_{x_0,R}^{-1}(s)}{C_0s}\,ds, \quad r>0.
$$
 Since,
 in view of \eqref{e:5.5},  $t\mapsto  1/\beta_{x_0,R} (t)$ has the doubling property uniformly  for all $x_0 \in E$ and $R>0$,
by Lemma \ref{L2.5} and its proof,
\eqref{e:con-hk} hold for $\beta_{x_0,R}$
with common constants
$c(\lambda)\equiv 1$ and $C(\lambda)$
  for all $x_0 \in E$ and $R>0$.
 With this fact,  \eqref{e:5.6}, \eqref{e:5.61} and \eqref{e:3.55},
 one can repeat the proof of Proposition \ref{T:3.1} to see that, for any $\varepsilon>0$, there is  a constant
 $C_\varepsilon:=C_\varepsilon(C,\eta,d_1,d_2, C_0, C_1,C_2,$ $C_3, C_*, \beta_1, c)>0$
 (uniformly on $x_0$ and  $R$) so that $$\|P_t^{B,\psi}\|_{L^1(B;m)\to L^\infty(B;m)}\le
 C_\varepsilon\, \beta_{x_0,R}(t)\,  \exp\left(
   (1+\varepsilon)  \phi(C_0, \psi,1)
    t
   \right),\quad t>0.$$  Therefore, the desired property (ii) for $p^{B(x_0, R)} (t,x,y)$ follows from the proofs of Theorems \ref{T:3.2} and \ref{T:thm1}.
\qed

\subsection{Examples}\label{S:5.2}

In this part, we give  some examples to illustrate the applications of Theorem \ref{T:thm1} and Corollary \ref{C:local}.
\begin{example}\label{Exm1.2}
{\rm[}{\it Heat kernel for Brownian motion on manifolds}{\rm]}
\rm Let $(M,\rho) $ be a non-compact manifold with bounded geometry (i.e., $M$ has a positive injectivity radius $r_0$, and its Ricci curvature
is bounded from below) and with the Riemannian volume measure $m$.
Let $p(t,x,y)$ be the heat kernel on $M$; that is, it is the smallest positive fundamental solution to the heat equation
$$\frac{\partial u}{\partial t}=\Delta u$$ on $\R_+\times M$. It was proven in \cite[Theorem 1.1]{BCG} that, if ${\rm dim}M=d$ and there is a continuous positive strictly increasing function $v(r)$ on $[r_0,\infty)$ such that for all $x\in M$ and $r\ge r_0$,
$m(B(x,r))\ge v(r)$,
 then there are constants $C,c>0$ such that for all $t>0$ and $x,y\in M$,
$$
p(t,x,y)\le \varphi(t):=
\begin{cases}
Ct^{-d/2},\quad
&t<  r_0^2,  \\
C/\gamma(ct)  ,   \quad& t\ge r_0^2,
\end{cases}
$$
where $\gamma(t)$ is defined by
$$
t =r_0^2 + \int_{v(r_0^2)}^{\gamma(t)} v^{-1}(s)\,ds,\quad t\ge r_0^2.
$$
Now  suppose that $\gamma(r)$ is an increasing  doubling function on
$ [r_0,\infty)$  with $\gamma (\infty):=\lim_{r\to \infty} \gamma(r)=\infty$. By Proposition \ref{P:dou}, there exist  $\bar\varphi\in \mathcal{R}$ and a constant $c\ge1$ such that
$$c^{-1}\bar\varphi(r)\le \varphi(r)\le
c\bar\varphi(r),\quad r\in \R_+.$$
Then, according to Corollary \ref{C:local}, for any $\eps>0$, there is a constant $C_\varepsilon>0$ so that for all $x,y\in M$ and $t>0$,
$$p(t,x,y)\le C_\varepsilon\widetilde\varphi(t)
 \exp \left(-\frac{\rho(x,y)^2}{4(1+\eps)t} \right),
$$
 where
$$ \widetilde\varphi(t):=
\begin{cases}
 t^{-d/2},\quad &t\le r_0^2,   \\
{\gamma(t)^{-1}},\quad& t> r_0^2.
  \end{cases}$$

Note that, when $v(r)=c_1e^{c_2r^\alpha}$ with $\alpha\in (0,1]$ on $[r_0,\infty)$,
it holds that for large $t$,
$c_3^{-1}\frac{t}{(\log t)^{1/\alpha}}\le \gamma(t)\le c_3\frac{t}{(\log t)^{1/\alpha}}$
with some $c_3\ge1$;
when $v(r)=c_4r^\beta$
with $\beta\ge1$ on $[r_0,\infty)$ (due to the fact that for any non-compact manifold $M$ with bounded geometry the volume growth is at least linear),
it holds that for large $t$,
 $c_5t^{\beta/(1+\beta)}\le \gamma(t)\le c_5t^{\beta/(1+\beta)}$ for some $c_5\ge1$.
In both cases $\gamma(r)$ are doubling functions such that $\gamma(\infty)=\infty$.

We also note that one may apply \cite[Theorem 1.1]{Gri} to obtain off-diagonal Gaussian estimates for this example as well. \qed\end{example}

\begin{example}{\rm[}{\it Heat kernel for Brownian motion  on hyperbolic spaces and its subordination}{\rm]} \rm Suppose that the Nash-type inequality \eqref{E:Nash} holds with $\delta=0$ and
$$
\theta(r)=\max\left\{r^{(\alpha+1)/\alpha},  \   r\log^{(\gamma-1)/\gamma}(2+r^{-1}) \right\}
$$
for some $\alpha>0$ and $\gamma\in (0,1]$. Then, by Theorem \ref{T:thm1} and the table in the end of Section \ref{Section3}, the on-diagonal upper bounds \eqref{e:ondup0} and the upper bounds \eqref{e:1.5--} hold with
$\varphi(t)=t^{-\alpha} e^{-t^\gamma}$ for the heat kernel of the associated semigroup $(P_t)_{t\ge0}$.

 A typical example is
 Brownian motion $(B_t)_{t\ge0}$ in the $d$-dimensional hyperbolic space ${\mathbb H}_k^d$,
  which is a simply connected complete $d$-dimensional Riemannian manifold
with a constant negative sectional curvature $-k^2$. In particular,
for $d=3$,
the assertion above holds with $\alpha=3/2$ and $\gamma=1$; e.g., see \cite[(1.3)]{Gri}.
The reader is also referred to \cite[Theorem 1.1]{GN}  for the explicit formulas of
the heat kernel on
 ${\mathbb H}_1^d$ for all $d\ge1$.

Let $(S_t)_{t\ge0}$ be a subordinator
that is independent of $(B_t)_{t\ge0}$ and has the Laplace exponent $h$.
Consider the subordinated process $(X_t)_{t\ge0}:=(B_{S_t})_{t\ge0}$ on the 3-dimensional
 hyperbolic space ${\mathbb H}_k^3$.
 By  \cite[Theorem 1]{SW},  the Nash-type inequality \eqref{E:Nash} holds with $\delta=0$ and
$\theta(r)=\max\{r h(r^{2/3}), r\}$ for  the Dirichlet form associated with
the subordinated process $(X_t)_{t\ge0}$.
In particular, let $(S_t)_{t\ge0}$ be a $\beta$-stable subordinator whose
Laplace exponent
 is $h(r)=r^\beta$ for some $\beta\in(0,1)$.
Then, \eqref{E:Nash} holds with $\delta=0$ and
$\beta(r)=\max\{r^{(1+(2\beta/3)}, r\},$  and so, by Theorem \ref{T:thm1}, the on-diagonal upper bounds \eqref{e:ondup0} and the upper bounds \eqref{e:1.5--} hold with
$\varphi(t)=t^{-3/(2
\beta)} e^{-t}$ for this $\beta$-stable subordinated Brownian motion on the 3-dimensional hyperbolic space. This gives us a concrete example of symmetric jump process whose heat kernel
 decays  exponentially
for large time.  \end{example}

It is now known that Davies' method presented in \cite{CKS} is very useful to obtain off-diagonal upper bounds for
heat kernels of symmetric jump processes, e.g., see \cite{BGK, CKK, CK03, CK08, CK10}.
All the works  mentioned above  except \cite{CK08, CK10}
  around Davies' method heavily relied on the assumption that the heat kernel $p(t,x,y)$ enjoys the
polynomial decay
in on-diagonal estimate (that is, \eqref{e:1-01} holds with $\varphi(t)=c \, t^{-\nu}$ for some $c,\nu>0$).

In \cite{CK08} (see also \cite{CK10, CKW1}),
the correct
 on-diagonal bound is  of the form $ c_0(\Phi^{-1}(t)) ^{-d}$ for some strictly increasing weighted function
 $\Phi:
 \R_+\to \R_+$ which satisfies doubling and reverse doubling properties; in particular,  it is not necessarily
of power function type and so   one cannot directly apply \cite[Theorem (3.25)]{CKS} to get heat kernel off-diagonal upper bound.
The approach of \cite{CK08}   is first to replace $ c_0(\Phi^{-1}(t)) ^{-d}$ by a rough upper bound $c_1 t^{-\theta}$
with some $c_1$, $\theta >0$ for $t\in (0, 1]$, and use
 \cite[Theorem (3.25)]{CKS} to obtain a preliminary off-diagonal upper estimate, and then bootstrap it to a precise off-diagonal estimate by a suitable scaling procedure.
However, this approach does not work in some other settings such as reflected diffusions with jumps in \cite{CKKW}.

 Below we give an example of  symmetric jump process on $\bR^d$ to show that some
 estimates  in \cite{CK08}
can be directly derived  by using Theorem \ref{T:thm1}. Note that, the approach of the example
below can be easily extended to metric measure spaces setting with uniformly volume doubling and reversed doubling properties.
 In particular, with Theorem \ref{T:thm1} in hand, we can obtain \cite[Lemma 4.3]{CK08}
directly without using a  scaling argument.

\begin{example}\label{Exm1.4}{\rm [}{\it Symmetric L\'evy-like processes with general scaling functions}{\rm]}
\rm
Consider a regular symmetric non-local Dirichlet form $(\sE,\sF)$ on $L^2(\R^d;dx)$ as follows:
\begin{align*}
\sE(u,v)&=\frac{1}{2}\iint_{\bR^d\times \bR^d\setminus {\rm diag}}(u(x)-u(y))(v(x)-v(y)) J(x,y) \,d x \,d y,\\
\sF &=\{u\in L^2(\bR^d;dx):\sE(u, u)<\infty\}, \end{align*} where  \begin{align}\label{e:intcon}
 \frac{c^{-1}}{|x-y|^d\phi(|x-y|)}\leq  J(x,y)\leq \frac{c}{|x-y|^d\phi(|x-y|)},\quad x\neq y
\end{align}
with some $c\ge1$ and a strictly increasing function
$\phi:\R_+\to\R_+$
satisfying that
$$
\int_0^1 \frac{s }{\phi(s)} \,ds<\infty
$$
and
\begin{align}\label{e:psiW}
 C_L(R/r)^{\beta_1} \le \phi(R)/\phi(r) \le C_U(R/r)^{\beta_2}, \quad 0<r<R<\infty
\end{align}
for some $C_L, C_U>0$, $\beta_1 \in (0, 2)$ and  $\beta_2 \ge \beta_1$.
It was proven in \cite[Theorem 3.4]{BKKL} that the Nash-type inequality \eqref{e:1-02} holds with $\theta(r)=c_0r/\Phi(r^{-1/d})$, where
$$\Phi(r)=\frac{r^2}{2\int_0^r \frac{s}{\phi(s)}\,ds}.$$
 We note that, by \cite[Section 2.1]{BKKL},  $\Phi:\R_+\to\R_+$ is an increasing function such that
\begin{align}\label{d:Pp}
\Phi(r) < \phi(r), \quad r\in \R_+
\end{align}
and
\begin{align}\label{e:PW}
 C_L(R/r)^{\beta_1} \le \Phi(R)/\Phi(r) \le  (R/r)^{2}  \wedge  (C_U(R/r)^{\beta_2}), \quad 0<r<R<\infty.
\end{align} In particular, $\Phi$ has the doubling property on $\R_+$ with $\Phi(0)=0$ and $\Phi(\infty)=\infty$.
Therefore, according to Proposition \ref{T:2.1}(i) and \cite[Theorem 3.1]{BBCK}, there are a properly exceptional set $\NN$ and
  a heat kernel
$p (t,x,y)$ associated with the Dirichlet form $(\sE,\sF)$ and defined on $(0,\infty)\times (\R^d\backslash \NN)\times (\R^d\backslash \NN)$ such that for any $t>0$ and $x,y\in \R^d\backslash \NN$,
\begin{equation}\label{e:uhkd} p(t,x,y)\le \frac{c_1}{(\Phi^{-1}(t))^d}
\end{equation}
for some $c_1>0$.

To consider off-diagonal upper bounds for $p(t,x,y)$, we will adopt the truncated argument as in \cite{BGK, CKK, CK03, CK08}.
For each $\rho>0$, we  define  a bilinear form $(\sE^{(\rho)}, \sF)$ by
\begin{align*}
\sE^{(\rho)}(u,v)=\frac{1}{2}\iint_{\bR^d\times \bR^d} (u(x)-u(y))(v(x)-v(y)){{\mathbbm 1}}_{\{|x-y|\le \rho\}}\, J(x, y)\,dx\,dy.
\end{align*}
It follows from \eqref{e:intcon} and \eqref{e:psiW} that there exists a constant $c_2>0$ such that for all $\rho>0$ and $u\in \sF$,
\begin{align*}
 \EE(u,u)
 &=
 \sE^{(\rho)}(u,u) +
 \frac{1}{2}
\int_{\{|x-y| \ge \rho\}} (u(x)-u(y))^2J(x, y)\, dx\,dy\\
  &\le  \sE^{(\rho)}(u,u) +
   2\left(\sup_{x\in \R^d} \int_{\{|x-y|\ge \rho\}}J(x,y)\,dy \right)\|u\|_2^2\\
  &\le \sE^{(\rho)}(u,u)+\frac{c_2}{\phi(\rho)}\|u\|^2_2.\end{align*} This implies that
$(\sE^{(\rho)},
\sF)$ is also a regular symmetric Dirichlet form on $L^2(\bR^d; dx)$, and that the Nash-type inequality \eqref{E:Nash} holds for $(\sE^{(\rho)},
\sF)$ with $\theta(r)=c_0r/\Phi(r^{-1/d})$ and $\delta={c_2}/{\phi(\rho)}$. As mentioned above, $\Phi$ has the doubling property on $\R_+$ such that $\Phi(0)=0$ and $\Phi(\infty)=\infty$, and so $r\mapsto \frac{1}{[\Phi^{-1}(r)]^d}$ is a regular function, thanks to Proposition \ref{P:dou}.
According to Theorem \ref{T:thm1}, there is  a heat kernel
 $p^{\rho}(t,x,y)$ associated with the Dirichlet form $(\sE^{(\rho)},\sF)$ and defined on $(0,\infty)\times (\R^d\backslash \NN)\times (\R^d\backslash \NN)$ such that for any $t>0$, $x,y\in \R^d\backslash \NN$  and $\psi\in \sF_b$, \begin{align*}
 p^\rho(t,x,y)  \le \frac{c_3e^{{c_2t}/{\phi(\rho)}}}{ (\Phi^{-1}(t))^d}  \exp\Big( -|\psi(x)-\psi(y)|+c_4t\big(\| \Gamma_\rho[{\psi}] \|_\infty \lor \| \Gamma_\rho[{-\psi}] \|_\infty\big)  \Big),
\end{align*} where $$\Gamma_\rho[\psi](x):=
\frac{1}{2}\int_{\{|x-y| \le \rho\}} (e^{\psi(x)-\psi(y)}-1)^2 J(x,y)\,dy. $$
Here, for simplicity we take the same properly exceptional set $\NN$ as before.

Next, we fix $x,y \in \R^d\setminus \sN$ and  take
$$
\psi(z):= \frac{s}{3} (|z-x| \land |x-y|) \quad \mbox{for } z \in \R^d,
$$
where $s>0$ is a constant to be chosen later. By the facts that $(1-e^r)^2 \le r^2 e^{2|r|}$ for $r \in \bR^1$ and $$|\psi(z_1)-\psi(z_2)| \le \frac{s}{3} | |z_1-x|-|z_2-x| | \le \frac{s}{3} |z_1 -z_2|,\quad z_1, z_2 \in \R^d,$$ it holds for every $z\in \R^d$ that
\begin{align*}
\Gamma_\rho[{\psi}](z) &= \frac{1}{2}\int_{\{|z-w| \le \rho\}} (1- e^{\psi(z)-\psi(w)})^2 J(z,w)\, dw \\	
&\le \frac{1}{2}\int_{\{|z-w| \le \rho\}} (\psi(z)-\psi(w))^2 e^{2|\psi(z)-\psi(w)|} J(z,w)\,dw \\
&\le \frac{1}{2}\left(\frac{s}{3}\right)^2 e^{2s\rho/3} \int_{\{|z-w| \le \rho\}} |z-w|^2 J(z,w)\,dw \\
&\le c_5 s^2 e^{2s\rho/3}
\int_0^{\rho} \frac{v}{\phi(v)}\,dv
=\frac{c_5}{2} s^2 e^{2s\rho/3} \frac{\rho^2}{\Phi(\rho)} \le c_{6} \frac{e^{s\rho}}{\Phi(\rho)},
\end{align*}
where in the fourth inequality we used \eqref{e:intcon}, and the last inequality follows from the inequality $v^2e^{2v/3}\leq 9e^v$ for $v>0$.
Thus, $$ \big\| \Gamma_\rho[{\psi}] \big\|_\infty \lor \big\| \Gamma_\rho[{-\psi}] \big\|_\infty \le c_{6} \frac{e^{s\rho}}{\Phi(\rho)}.$$
Therefore, we arrive at that for any $t>0$, $x,y\in \R^d\backslash \NN$,
\begin{equation}
\label{e:q4}
 p^\rho(t,x,y) \le \frac{c_3e^{{c_2t}/{\psi(\rho)}}}{(\Phi^{-1}(t))^d}   \exp \left( -\frac{s|x-y|}{3} + c_6 \frac{te^{s\rho}}{\Phi(\rho)} \right).
\end{equation}

We now further assume that  $t \le \Phi(|x-y|)$.
Define $\gamma:=\frac{\beta_1}{3(d+\beta_1)}$. Take $\rho= \gamma |x-y|$ and $s = \frac{1}{\gamma |x-y|} \log (\frac{\Phi(|x-y|)}{t})$. Then, by \eqref{e:PW},
\begin{align*}
-\frac{s|x-y|}{3} +  c_6  \frac{t e^{s\rho} }{\Phi(\rho)} &= \frac{1}{3\gamma} \log\left(\frac{t}{\Phi(|x-y|)}\right) + c_6  \frac{\Phi(|x-y|)}{\Phi(\rho)}\le \frac{1}{3\gamma} \log\left(\frac{t}{\Phi(|x-y|)}\right) + \frac{ c_6 }{\gamma^2}.
\end{align*}
On the other hand, due to \eqref{d:Pp} and \eqref{e:PW},
\begin{align*}
  \frac{c_2t}{\phi(\rho)}\le
      \frac{c_2t}{\Phi(\rho)}\le \frac{c_2\Phi(|x-y|)}{\Phi(\rho)}\le   c_7.
\end{align*}
Thus,
combining both estimates above with \eqref{e:PW} and \eqref{e:q4}, we obtain that for any $t>0$ and $x,y \in \R^d\setminus \sN$,
\begin{align*}
p^\rho(t,x,y) &\le \frac{ c_{8}}{(\Phi^{-1}(t))^d}  \left( \frac{t}{\Phi(|x-y|)} \right)^{1/3\gamma}\nn \\
&=  c_{8}\left( \frac{\Phi^{-1}(\Phi(|x-y|))}{\Phi^{-1}(t)} \right)^d \left( \frac{t}{\Phi(|x-y|)} \right)^{1+d/\beta_1}    \frac{1}{|x-y|^d} \nn \\
&\le  c_{9}\left( \frac{\Phi(|x-y|)}{t} \right)^{d/\beta_1} \left( \frac{t}{\Phi(|x-y|)} \right)^{1+d/\beta_1}  \frac{1}{|x-y|^d} \nn \\
& =   \frac{c_{9} t}{ |x-y|^d\Phi(|x-y|)}.
\end{align*}
This along with \cite[Lemma 3.1]{BGK}, \eqref{e:intcon}, \eqref{d:Pp} and  \eqref{e:PW} yields that for every $0<t\le \Phi(|x-y|)$ and $x,y \in \R^d\setminus \sN$,
 \begin{equation}\label{e:q8}\begin{split}
p(t,x,y) \le \frac{ c_{9}  t}{|x-y|^d \Phi(|x-y|)} +
\frac{ct}{\phi(\gamma |x-y|) (\gamma |x-y|)^d}
 \le  \frac{ c_{10} t}{|x-y|^d \Phi(|x-y|)}.
\end{split}\end{equation}
We conclude from \eqref{e:uhkd} and \eqref{e:q8} that
for every $t>0$ and $x,y \in \R^d\setminus \sN$,
\begin{equation}\label{e:kk}
p(t,x,y) \le
c_{11}
 \left(\frac{1}{(\Phi^{-1}(t))^d}  \wedge  \frac{ t}{|x-y|^d \Phi(|x-y|)} \right).
\end{equation} \qed
 \end{example}

 \begin{remark}\rm
 \begin{itemize}
 \item[(i)] The proof above simplifies the proof of \cite[Theorem 3.8]{BKKL} without using the scaling argument.
Moreover,
either by following the same arguments
in the above proof or by \eqref{e:kk} with the same arguments as these in the proofs of \cite[Theorem 2.4]{CKK1} and \cite[Proposition 2.2]{CKS1},
one can show that, if we assume
$
\sup_{x \in \R^d} \int_{\{|y-x|\ge1\}} J(x,y)\,dy < \infty
$
and \eqref{e:intcon} holds only for $0<|x-y| \le 1$, then the upper bound in \eqref{e:kk} holds true for $0<t \le1$ and $x,y \in \R^d\setminus \sN$ with $|x-y| \le 1$.
Note that, if $\psi(r)=r^2\ln ^{1+\beta}(2/r)$ for all $r\in (0,1]$ with $\beta>0$, then  $c_1r^2\ln ^{\beta}(2/r)\le \Phi(r) \le c_2 r^2\ln ^{\beta}(2/r)$
for $0<r \le 1$.
 Thus, we
recover  the main result of \cite{Mi} (see \cite[Theorem 1.1]{Mi}) and in fact we get much more.

\item[(ii)] We should mention that the heat kernel upper bounds \eqref{e:kk} are sharp when $c^{-1}\phi(r)\le\Phi(r)\le c\phi(r)$ holds for all $r>0$ with some $c\ge1$, see \cite{CK08} for more details.
On the other hand, even when the estimate \eqref{e:kk} is not optimal in general setting,
 it is the first crucial step to
obtain
the optimal estimates, see \cite{BKKL, CKW3}.
\end{itemize} \end{remark}

\noindent {\bf Acknowledgement.} The authors would thank Alexander Grigor'yan for fruitful discussions and for giving related references on manifolds.
They also thank Naotaka Kajino for useful comments on
the proof of \cite[Theorem 3.2.4]{Dav2}.
The research of Zhen-Qing Chen is partially supported by Simons Foundation Grant 520542.
The research of Panki Kim is supported by
 the National Research Foundation of Korea (NRF) grant funded by the Korea government (MSIP)
(No.\ 2016R1E1A1A01941893).\ The research of Takashi Kumagai is supported
by JSPS KAKENHI Grant Number JP17H01093 and by the Alexander von Humboldt Foundation.\
 The research of Jian Wang is supported by the National
Natural Science Foundation of China (Nos.\ 11831014 and 12071076), the Program for Probability and Statistics: Theory and Application (No.\ IRTL1704) and the Program for Innovative Research Team in Science and Technology in Fujian Province University (IRTSTFJ).

\small

\bibliographystyle{plain}

\vskip 0.3truein

{\bf Zhen-Qing Chen}

Department of Mathematics, University of Washington, Seattle,
WA 98195, USA

E-mail: zqchen@uw.edu

\bigskip

{\bf Panki Kim}

Department of Mathematical Sciences
and Research Institute of Mathematics,
Seoul
National University,
\newline
\indent
Seoul 08826,
  Republic of Korea

E-mail: pkim@snu.ac.kr

\bigskip

{\bf Takashi Kumagai}

Research Institute for Mathematical Sciences,
Kyoto University, Kyoto 606-8502, Japan

E-mail: kumagai@kurims.kyoto-u.ac.jp

\bigskip

{\bf  Jian Wang}

College of Mathematics and Informatics \& Fujian Key Laboratory of Mathematical
Analysis and Appl-\newline \indent ications (FJKLMAA)  \&
 Center for Applied Mathematics of Fujian Province (FJNU),
Fujian Normal \newline \indent University,  Fuzhou 350007,  China

 Email: jianwang@fjnu.edu.cn

\end{document}